\documentclass[11pt,twoside,reqno]{amsart}

\usepackage{amsmath,amsfonts,amsthm,amssymb}
\usepackage{graphicx,psfrag,overpic}

\usepackage{bm,enumerate}
\usepackage{cases}
\usepackage[all,cmtip,line]{xy}
\usepackage{array,CJK}
\usepackage{array,setspace}
\usepackage{color}
\usepackage{yhmath}
\usepackage{appendix}

\usepackage[utf8]{inputenc}
\usepackage[T1]{fontenc}

\numberwithin{equation}{section}
\numberwithin{figure}{section}

\renewcommand{\thefootnote}{\ensuremath{\fnsymbol{footnote}}}

\usepackage{graphics}
\usepackage{epsfig}
\newtheorem{claim}{\bf \t}[part]

\newtheorem{theorem}{Theorem}[section]

\newtheorem{lemma}{Lemma}[section]

\newtheorem{remark}{Remark}[section]
\newtheorem{definition}{Definition}[section]

\newcommand{\qnt}[1]{\left(#1\right)}

\title[Weak-Strong Uniqueness for a RigiD Body in an Inviscid Compressible Fluid]{Weak-Strong Uniqueness for a Rigid Body Immersed in an Inviscid Compressible Fluid}

\author{Qianfeng Li}
\address{Department of Mathematics,
Friedrich-Alexander-Universität Erlangen-Nürnberg, Cauerstr. 11, 91058, Germany }
\email{qfli1995@gmail.com}

\author{Emil Wiedemann$^{*}$}
\address{Department of Mathematics,
Friedrich-Alexander-Universität Erlangen-Nürnberg, Cauerstr. 11, 91058, Germany }
\email{emil.wiedemann@fau.de}

\begin{document}
	\begin{abstract}
		\,\,We consider the coupled motion of a free rigid body immersed in an inviscid compressible isentropic fluid. By means of a vanishing viscosity limit, we obtain the local-in-time existence of a dissipative measure-valued solution to the model. Moreover, we establish the weak-strong uniqueness property of the obtained measure-valued solution. To our knowledge, this is the first mathematical result on compressible inviscid fluid-structure interaction. The key novel technique is the construction of a suitable approximation of the test function in the weak formulation of the inviscid system, as the space of test functions depends on the viscosity parameter.  
	\end{abstract}
	
	\keywords{fluid-structure interaction model, compressible Euler equations, dissipative measure-valued solution, weak-strong uniqueness, vanishing viscosity limit}
	\maketitle

\begingroup
\renewcommand\thefootnote{\fnsymbol{footnote}}
\footnotetext[1]{Corresponding author.}
\endgroup

	\section{Introduction}
\subsection{Mathematical description}

We consider the coupled motion of an isentropic, compressible, inviscid fluid and a (possibly mass-inhomogeneous) rigid body in a fixed domain $\Omega \subset \mathbb{R}^3$. At time $t \in \mathbb{R}^+$, the rigid body occupies $\mathcal{B}_t \subset \Omega$ and the fluid occupies $\mathcal{F}_t := \Omega \setminus \mathcal{B}_t$.
The fluid density, velocity, and pressure are denoted by $\rho_F$, $u_F$, and $p_F$, respectively, while $\rho_B$ and $u_B$ denote the density and velocity of the rigid body.

Since the solid is rigid, its velocity field decomposes into translation and rotation:
\begin{equation}\label{eq:rigid-vel}
    u_B(t,x) = V(t) + \omega(t) \times \bigl(x - X(t)\bigr),
\end{equation}
where $V(t)$ and $\omega(t)$ are the translational and angular velocities, and $X(t)$ is the center of mass,
\begin{equation}\label{eq:com}
    M := \int_{\mathcal{B}_t} \rho_B(t,x)\, \mathrm{d}x,
    \qquad
    X(t) := \frac{1}{M}\int_{\mathcal{B}_t} \rho_B(t,x)\, x \,\mathrm{d}x .
\end{equation}
For an isentropic polytropic fluid,
\begin{equation}\label{eq:state}
    p_F = \rho_F^{\gamma}, \qquad \gamma>1.
\end{equation}

The coupled dynamics read
\begin{equation}\label{eq:model}
\left\{
\begin{aligned}
    &\partial_t \rho_F + \operatorname{div}(\rho_F u_F) = 0,
    && t>0,\; x \in \mathcal{F}_t, \\[2pt]
    &\partial_t(\rho_F u_F) + \operatorname{div}\bigl(\rho_F u_F \otimes u_F + p_F \mathbb{I}\bigr) = 0,
    && t>0,\; x \in \mathcal{F}_t, \\[2pt]
    &\partial_t \rho_B + u_B \cdot \nabla \rho_B = 0,
    && t>0,\; x \in \mathcal{B}_t, \\[2pt]
    &X'(t) = V(t), \\[2pt]
    &M V'(t) = \displaystyle\int_{\partial \mathcal{B}_t} p_F\, n \, \mathrm{d}S, \\[6pt]
    &\bigl(\mathcal{J}(t)\,\omega(t)\bigr)' 
        = \displaystyle\int_{\partial \mathcal{B}_t} \bigl(x - X(t)\bigr) \times \bigl(p_F\, n\bigr)\, \mathrm{d}S,
\end{aligned}
\right.
\end{equation}
with slip boundary conditions
\begin{equation}\label{eq:bc}
\left\{
\begin{aligned}
    &u_F \cdot n = 0, && t>0,\; x \in \partial\Omega,\\
    &(u_F - u_B) \cdot n = 0, && t>0,\; x \in \partial\mathcal{B}_t,
\end{aligned}
\right.
\end{equation}
and initial data
\begin{equation}\label{eq:ic}
\left\{
\begin{aligned}
    &(\rho_F, u_F)\big|_{t=0} = \bigl(\rho_F^0(x),\, u_F^0(x)\bigr),
        && x \in \mathcal{F}_0,\\
    &(X,V,\omega)\big|_{t=0} = \bigl(X_0,\, V_0,\, \omega_0\bigr),\\
    &(\rho_B, u_B)\big|_{t=0} = \bigl(\rho_B^0(x),\, V_0 + \omega_0 \times (x-X_0)\bigr),
        && x \in \mathcal{B}_0.
\end{aligned}
\right.
\end{equation}
Here, and throughout this paper, $n$ is the inward unit normal to $\partial\mathcal{F}_t$ (hence on the fluid-solid interface the \emph{outward} unit normal on $\partial\mathcal{B}_t$), and $\mathbb{I}$ is the $3\times 3$ identity matrix. The inertia tensor is
\begin{equation}\label{eq:inertia}
    \mathcal{J}(t) = \int_{\mathcal{B}_t} \rho_B(t,x)\,\Bigl( |x - X(t)|^2 \mathbb{I}
    - (x - X(t)) \otimes (x - X(t)) \Bigr)\, \mathrm{d}x .
\end{equation}

The solid configuration can be represented as
\begin{equation}\label{eq:body-domain}
    \mathcal{B}_t = \bigl\{\, X(t) + \mathbb{O}_t\, x : x \in \mathcal{B}_0 \,\bigr\},
\end{equation}
where $\mathbb{O}_t \in SO(3)$ is the rotation matrix. Its evolution satisfies
\begin{equation}\label{eq:rotation-ode}
    \dot{\mathbb{O}}_t \,\mathbb{O}_t^{\!\top} = [\omega(t)]_\times,
    \qquad \mathbb{O}_0 = \mathbb{I},
\end{equation}
with $[\omega]_\times$ the skew-symmetric matrix such that $[\omega]_\times v = \omega \times v$ for all $v \in \mathbb{R}^3$.

\begin{remark}
     The momentum equation of the rigid body reads
    \begin{equation}\label{eq:MomentumPDED}
        \partial_t(\chi_{\mathcal{B}_t}\rho_Bu_B)+\operatorname{div}\bigl(\chi_{\mathcal{B}_t}\rho_Bu_B\otimes u_B\bigr)=p_Fn\delta_{\partial\mathcal{B}_t}
    \end{equation}
   in the sense of distributions for test functions in $C_c^\infty((0,T);\mathcal{R}(\Omega))$ and 
\begin{equation}
    \mathcal{R}(\Omega):=\{\xi:\Omega\to \mathbb{R}^3 \big| \xi=\alpha+\eta\times(\cdot-X), \alpha,\eta,X\in \mathbb{R}^3 \}.
\end{equation}
Here $\chi_{\mathcal{B}_t}$ is the characteristic function over $\mathcal{B}_t$ and $\delta_{\partial\mathcal{B}_t}$ is the Dirac measure over $\partial\mathcal{B}_t$. We leave the proof for the Appendix.
\end{remark}

\subsection{Literature review and main results} 
Fluid-structure interaction arises in a wide range of natural systems and engineered structures, and it has long attracted the attention of mathematicians. An introduction to fluid-structure interaction models can be found in \cite{MR1942470}. As pointed out in \cite{gerard2014existence}, allowing for slip at the interface is natural in the context of viscous flows, since the classical no-slip condition leads to unrealistic collisional behavior between the solid and the domain boundary. The no-collision paradox was introduced in the context of the Stokes system in the 1960s \cite{MR161549,cooley1969slow}, and was confirmed at the level of the Navier-Stokes equations \cite{MR2354496,MR2481302}. Meanwhile, under Navier-type boundary conditions, collision can occur \cite{MR2665044,MR3281946}. 

Within the framework of Navier slip boundary conditions, several results have been established: the motion of a rigid body immersed in an incompressible Navier–Stokes flow in a bounded domain admits Leray-type weak solutions up to collision in three dimensions \cite{gerard2014existence}; this analysis was subsequently extended to polytropic compressible viscous flows, proving the local existence of weak solutions for adiabatic exponents greater than $\tfrac{3}{2}$ \cite{nevcasova2022motion}; and, using Lagrangian coordinates, local-in-time existence and uniqueness of strong solutions were obtained for polytropic compressible viscous flows with adiabatic exponents greater than 1 \cite{djebour2023existence}.  In the context of no-slip boundary conditions in viscous fluid-rigid systems, the existence of weak solutions up to collision was studied in \cite{MR1682663,MR1765138,MR1781915,MR1725677}. These results are also generalized to cases with possible collisions \cite{MR1870954,MR2019028,MR1981859}. Reducing the ideal fluid-rigid system to an ordinary differential equation on a closed infinite-dimensional manifold \cite{MR344713}, local in time existence and uniqueness of strong solutions was proved in \cite{MR2719277}. 

In terms of the vanishing viscosity limit, the only available result so far appears to be \cite{caggio2021measure}. There, in the incompressible situation, the vanishing viscosity limit for the model in \cite{gerard2014existence} is shown to be a dissipative Young measure-valued solution, and the corresponding weak-strong uniqueness was established. The need to consider measure-valued solutions instead of distributional ones stems from the notorious lack of compactness that impedes the passage to a weak solution of Euler, even in the absence of a moving rigid body, in the vanishing viscosity limit. See~\cite{wiedemannsurvey} for an introduction to measure-valued solutions and their weak-strong uniqueness.  

In this paper, we let the weak solutions for the compressible viscous fluid-rigid system \cite{nevcasova2022motion} converge, as viscosity tends to zero, to obtain a Young measure-valued solution to the compressible inviscid fluid-rigid system. We establish weak-strong uniqueness. A key novel ingredient (Step 2 in the proof of Theorem~\ref{thm:ExistenceofYMVS} below) is the delicate construction of a suitable approximation of the test function associated to viscosity. Indeed, the space of test functions in the weak formulation depends on the position of the rigid body, and thus on the viscosity parameter. We cannot therefore test the weak formulation for Euler and Navier--Stokes with a fixed test function. This issue has apparently been overlooked in~\cite{caggio2021measure}.     

We state our main results as follows. 

\begin{theorem}[Existence of Young Measure-Valued Solutions]\label{thm:ExistenceofYMVS}
Assume that the adiabatic exponent satisfies $\gamma > \tfrac{3}{2}$, and that the domains $\Omega$ and $\mathcal{B}_0$ are regular satisfying 
\[
\mathrm{dist}(\partial \Omega, \mathcal{B}_0) > 2\sigma
\]
for some positive constant $\sigma$. Suppose that the initial data fulfill the following conditions:
\begin{equation}\label{eq:initial-regularity}
\begin{aligned}
&\rho_F^0\in L^\gamma(\mathcal{F}_0), \qquad 
\rho_F^0 u_F^0\in L^{\frac{2\gamma}{\gamma+1}}(\mathcal{F}_0), \qquad
u_F^0 \chi_{\{\rho_F^0>0\}}\in L^2(\mathcal{F}_0),\\
&\rho_F^0 u_F^0 \chi_{\{\rho_F^0=0\}} = 0, \qquad
\rho_B^0\in L^\infty(\mathcal{B}_0), \qquad
u_B^0 = V_0 + \omega_0\times (x-X_0),\\
&\inf_{\mathcal{F}_0}\rho_F^0 \ge 0, \qquad 
\inf_{\mathcal{B}_0}\rho_B^0 > 0.
\end{aligned}
\end{equation}
Define the initial Young measure by
\[
\nu_0 = \chi_{\mathcal{F}_0}\,\delta_{(\rho_F^0, \sqrt{\rho_F^0}\,u_F^0)}
      + \chi_{\mathcal{B}_0}\,\delta_{(\rho_B^0, \sqrt{\rho_B^0}\,u_B^0)}.
\]

Then the compressible inviscid fluid-structure interaction system \eqref{eq:model}–\eqref{eq:inertia} admits a local-in-time dissipative Young measure-valued solution 
\[
(\nu,\,\mathcal{D}(t),\,\mathcal{B}_t)
\]
in the sense of Definition~\ref{def:definitionofYoungmeasure}.

Here and throughout the paper, we denote by $\chi_{\mathcal{Q}}$ the indicator function of a set $\mathcal{Q}$.
\end{theorem}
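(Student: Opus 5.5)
We obtain the measure-valued solution as a vanishing viscosity limit of the weak solutions constructed in \cite{nevcasova2022motion}. For each $\varepsilon>0$ we consider the compressible Navier--Stokes fluid--rigid body system with viscosity coefficients $\mu_\varepsilon=\varepsilon\mu$, $\lambda_\varepsilon=\varepsilon\lambda$ and Navier slip conditions on $\partial\Omega$ and $\partial\mathcal{B}^\varepsilon_t$. We take the same initial data (mollified if necessary so that they fit the hypotheses there, with $\gamma>\tfrac32$). By \cite{nevcasova2022motion}, there is a weak solution $(\rho_\varepsilon,u_\varepsilon,\mathcal{B}^\varepsilon_t)$ on a time interval $[0,T]$. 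We need $T$ to be independent of $\varepsilon$. To get this, we bound $|V_\varepsilon|,|\omega_\varepsilon|$ uniformly through the energy inequality, since $M|V|^2+\mathcal{J}\omega\cdot\omega$ is controlled by the initial energy. This gives a uniform Lipschitz bound on $X_\varepsilon,\mathbb{O}^\varepsilon_t$, so $\mathrm{dist}(\partial\Omega,\mathcal{B}^\varepsilon_t)>\sigma$ on $[0,T]$ for all $\varepsilon$. The energy inequality
\[
E_\varepsilon(t)+\varepsilon\int_0^t\!\!\int_{\mathcal{F}^\varepsilon_s}\bigl(\mu|\nabla u_\varepsilon|^2+\lambda|\operatorname{div}u_\varepsilon|^2\bigr)+\text{(slip dissipation)}\le E(0)
\]
then gives uniform bounds for $\rho_\varepsilon\in L^\infty L^\gamma$ and $\sqrt{\rho_\varepsilon}u_\varepsilon\in L^\infty L^2$ on the whole of $\Omega$, with the body extended by $\rho_B,u_B$.

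\textbf{Compactness.} By Arzel\`a--Ascoli, along a subsequence $(X_\varepsilon,\mathbb{O}^\varepsilon,V_\varepsilon,\omega_\varepsilon)$ converges: uniformly for the positions, weakly-$*$ in $L^\infty$ for the velocities. This yields a limit body $\mathcal{B}_t$ with $\chi_{\mathcal{B}^\varepsilon_t}\to\chi_{\mathcal{B}_t}$ in $C([0,T];L^p)$. The transported density $\rho_B^\varepsilon$ converges because it is the rigid transport of $\rho_B^0$. We then apply the fundamental theorem of Young measures to $(\rho_\varepsilon,\sqrt{\rho_\varepsilon}u_\varepsilon)$, which generates $\nu_{t,x}$. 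The energy density $\tfrac12|m|^2+\tfrac{1}{\gamma-1}\rho^\gamma$ and the convective terms $\frac{m\otimes m}{\rho}$, $\rho^\gamma$ are only bounded in $L^\infty L^1$. Their weak-$*$ limits as measures, minus the Young-measure averages, define the concentration defects. These defects are dominated by the energy defect $\mathcal{D}(t):=E(0)-\int\langle\nu_{t,x},\text{energy}\rangle-\lim(\text{dissipation})$, as in the standard argument from \cite{wiedemannsurvey,caggio2021measure}. The viscous stress term is controlled by $\varepsilon\|\nabla u_\varepsilon\|_{L^2}\le\sqrt{\varepsilon}\,C\to0$ when tested against a fixed $W^{1,\infty}$ function. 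The continuity equation passes to the limit directly, and the initial measure is $\nu_0$.

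\textbf{Main obstacle: the momentum equation.} Admissible test functions $\varphi$ for the limit must be rigid on $\mathcal{B}_t$, with $\varphi\cdot n=0$ on $\partial\Omega$. Such a $\varphi$ is not rigid on $\mathcal{B}^\varepsilon_t$, so we cannot test the $\varepsilon$-problem with it. We therefore construct $\varphi_\varepsilon$ as follows. Write $\varphi=\ell(t)+\theta(t)\times(x-X(t))$ near $\mathcal{B}_t$, with $\ell,\theta$ the rigid data. Choose a cutoff $\zeta_\varepsilon$ equal to $1$ on a $\delta$-neighbourhood of $\mathcal{B}^\varepsilon_t$ and supported away from $\partial\Omega$, and set
\[
\varphi_\varepsilon=\zeta_\varepsilon\bigl(\ell+\theta\times(x-X_\varepsilon)\bigr)+(1-\zeta_\varepsilon)\varphi+\text{(Bogovskii-type correction if needed)}.
\]
Because $X_\varepsilon\to X$ and $\mathcal{B}^\varepsilon_t\to\mathcal{B}_t$ uniformly, we get $\varphi_\varepsilon\to\varphi$ in $W^{1,\infty}$ and $\partial_t\varphi_\varepsilon\to\partial_t\varphi$ in $L^q$. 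Here we use the derivative $\dot X_\varepsilon=V_\varepsilon$, which converges weakly. This weak convergence is the delicate point. We handle it by using the $\zeta_\varepsilon$-region, where $\varphi=\varphi_\varepsilon+\theta\times(X_\varepsilon-X)$, so that the mismatch terms are of the form $(\text{bounded})\times|X_\varepsilon-X|\to0$. The remaining products of a weakly converging $V_\varepsilon$ with strongly converging quantities then pass to the limit. The boundary slip friction terms are $O(\sqrt{\varepsilon})$ by the energy bound. The result is the measure-valued momentum identity with a concentration term bounded by $\mathcal{D}$, together with the energy inequality. This is exactly Definition~\ref{def:definitionofYoungmeasure}.
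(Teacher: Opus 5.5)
\textbf{There is a genuine gap in your test-function construction.} Your overall architecture matches the paper's: vanishing viscosity from the Navier-slip solutions, an $\varepsilon$-independent existence time from the energy bound on $V_\varepsilon,\omega_\varepsilon$, Young measures with concentration defects dominated by $\mathcal{D}$, and viscous and friction terms of order $\sqrt{\varepsilon}$. You also correctly single out the $\varepsilon$-dependence of the test-function class as the main obstacle. The problem is how you resolve it.

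The test functions in item (4) of Definition~\ref{def:definitionofYoungmeasure} range over $\tilde{\mathcal{V}}_{\mathcal{B}^T}$. They are rigid only \emph{inside} $\mathcal{B}_t$. On $\mathcal{F}_t$ they equal an arbitrary smooth $\Psi^F$, subject only to $(\Psi^F-\Psi^B)\cdot n=0$ and $n\cdot\nabla(\Psi^F-\Psi^B)n=0$ on $\partial\mathcal{B}_t$. So their tangential components jump across the interface, and you cannot write $\varphi=\ell+\theta\times(x-X)$ near $\mathcal{B}_t$. Your $\varphi_\varepsilon$ is rigid on a whole $\delta$-neighbourhood of $\mathcal{B}^\varepsilon_t$, hence differs from $\Psi^F$ by an amount of order one there. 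Consequently:
\begin{itemize}
\item For fixed $\delta$ you only recover the limit identity for a rigidified test function, not for $\varphi$.
\item Letting $\delta\to0$ produces $|\nabla\varphi_\varepsilon|\sim\delta^{-1}$ on a layer of width $\delta$.
\end{itemize}
Since $\rho^\varepsilon u^\varepsilon\otimes u^\varepsilon$ and $(\rho^\varepsilon)^\gamma$ are only bounded in $L^\infty L^1$, they may concentrate precisely in that layer. The resulting layer integrals are then not small. Nothing in the uniform bounds lets you show they vanish, or identify their limit with $\langle\lambda'\otimes\lambda'+\lambda_1^\gamma\mathbb{I},\nu\rangle:\nabla\Psi^F$ plus a defect controlled by $\mathcal{D}$. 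So the claim $\varphi_\varepsilon\to\varphi$ in $W^{1,\infty}$ is false for the relevant class.

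If you restrict to $\varphi$ rigid on a neighbourhood of $\mathcal{B}_t$, the argument does go through. In fact no modification is needed: $\ell+\theta\times(x-X)$ is a rigid field on all of $\Omega$, so such $\varphi$ is already admissible for the $\varepsilon$-problem once $\mathcal{B}^\varepsilon_t$ lies in that neighbourhood. The re-centering at $X_\varepsilon$, with its weakly converging $V_\varepsilon$, is an artifact of your construction. But this yields the momentum balance only for a strictly smaller class. That class does not contain, for instance, the strong-solution velocity $U$ used as $\Psi^F$ in the proof of Theorem~\ref{thm:Weak-strongUniqueness}. Definition~\ref{def:definitionofYoungmeasure} is therefore not verified.

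\textbf{The missing idea is that rigidification is unnecessary.} The viscous class $\mathcal{V}_{\mathcal{B}^{\epsilon T}}$ also allows tangential jumps across $\partial\mathcal{B}^\epsilon_t$. That is what the interface friction term in \eqref{eq:ViscousMomentumEstimates} accounts for, and it is $\mathcal{O}(\sqrt{\epsilon})$. The only incompatibility is that normal matching must hold on $\partial\mathcal{B}^\epsilon_t$ rather than on $\partial\mathcal{B}_t$.

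The paper therefore keeps $\phi^B$ inside $\mathcal{B}^\epsilon_t$ and the tangential components of $\phi^F$ outside. It corrects only the normal component in the layer $\{0<z<\delta\}$, interpolating $\langle\phi^B,e_z\rangle$ and $\langle\phi^F,e_z\rangle$ with $\eta(\tfrac{3}{2\delta}z)$. The cutoff derivative is $\mathcal{O}(\delta^{-1})$, so the normal mismatch in the layer must be $\mathcal{O}(\delta^2)$. This is exactly where two ingredients enter:
\begin{itemize}
\item the $C^1$-contact condition $n\cdot\nabla(\phi^F-\phi^B)n=0$ built into $\tilde{\mathcal{V}}_{\mathcal{B}^T}$;
\item the quantitative convergence of boundaries and normals in Lemma~\ref{lem:ConvergeBou1}, of order $\delta^5$ up to symmetries of $\mathcal{B}_0$.
\end{itemize}
The outcome is the uniform expansion $\nabla\phi^\epsilon=\nabla\phi^F\chi_{\{z\ge0\}}+\nabla\phi^B\chi_{\{z<0\}}+\mathcal{O}(\delta)$ of Lemma~\ref{lem:JJ1}. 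This is what allows passing to the limit against merely $L^1$-bounded nonlinearities. Your proposal never uses the $C^1$-contact condition, which is a clear sign that this step is missing.
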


\begin{remark}
For a monatomic gas, the adiabatic exponent is $\gamma = \tfrac{5}{3}$, which satisfies the assumption $\gamma > \tfrac{3}{2}$ in Theorem~\ref{thm:ExistenceofYMVS}. In fact, we do not believe that $\gamma>\frac32$ is necessary for the existence of dissipative measure-valued solutions: The requirement stems from our approximation by solutions of the compressible Navier--Stokes equations, whose existence is only known for $\gamma>\frac32$. With a different (possibly physically less relevant) approximation, one may presumably recover a measure-valued solution in the full range $\gamma>1$. This has been done, for instance, in~\cite{MR3567640}. Note however the next theorem (weak-strong uniqueness) is proved for any $\gamma>1$.
\end{remark}

\begin{theorem}[Weak-Strong Uniqueness]\label{thm:Weak-strongUniqueness}
Suppose that the problem \eqref{eq:model}–\eqref{eq:inertia} admits a dissipative Young measure-valued solution 
\[
(\nu=\chi_{\mathcal{F}_{1,t}}\nu+\chi_{\mathcal{B}_{1,t}}\delta_{(\rho_{B1},\sqrt{\rho_{B1}}u_{B1})},~\mathcal{D}(t),~\mathcal{B}_{1,t})_{t\in[0,T]},
\]
and a strong solution 
\[
(\rho_{F2},u_{F2},\rho_{B2},u_{B2},\mathcal{B}_{2,t})_{t\in[0,T]},
\]
such that the following regularity and compatibility conditions hold:
\begin{equation}\label{eq:RestrictontestforstrongSolution}
\begin{aligned}
&u_{B1}(t,x)=V_1(t)+\omega_1(t)\times(x-X_1(t)), 
&&V_1(\cdot),\,\omega_1(\cdot)\in L^\infty(0,T);\\
&u_{B2}(t,x)=V_2(t)+\omega_2(t)\times(x-X_2(t)), 
&&V_2(\cdot),\,\omega_2(\cdot)\in C^1(0,T);\\
&\rho_{F2},u_{F2},\nabla_x\rho_{F2},\nabla_xu_{F2}\in C\!\left(\bigcup_{t\in[0,T]}\{t\}\times\overline{\mathcal{F}_{2,t}}\right), 
&&\partial_t\rho_{F2},\,\partial_tu_{F2}\in L^1\!\left([0,T];C(\overline{\mathcal{F}_{2,t}})\right);\\
&n\cdot(\nabla_xu_{F2}-\nabla_xu_{B2}) n\big|_{\partial\mathcal{B}_{2,t}}=0, 
&&\rho_{B2}\in C^1\!\left(\bigcup_{t\in[0,T]}\{t\}\times\overline{\mathcal{B}_{2,t}}\right);\\
&\inf_{t\in[0,T]}\inf_{x\in\mathcal{F}_{2,t}}\rho_{F2}(x,t)>0, 
&&\max_{i=1,2}\sup_{t\in[0,T]}\mathrm{dist}(\partial\Omega,\partial\mathcal{B}_{i,t})>\tfrac{3\sigma}{2}.
\end{aligned}
\end{equation}
Here, $n|_{\partial\mathcal{B}_{2,t}}$ denotes the unit outward normal vector at $\partial\mathcal{B}_{2,t}$.  

Then the dissipative measure-valued solution coincides with the strong solution in the sense that
\begin{equation}
\begin{aligned}
&\mathcal{B}_{1,t}=\mathcal{B}_{2,t}, &&t\in(0,T),\\
&\nu=\chi_{\mathcal{F}_{1,t}}\delta_{(\rho_{F2},\sqrt{\rho_{F2}}u_{F2})}
+\chi_{\mathcal{B}_{2,t}}\delta_{(\rho_{B2},\sqrt{\rho_{B2}}u_{B2})}, &&t\in(0,T),\\
&\mathcal{D}(t)=0, &&t\in(0,T).
\end{aligned}
\end{equation}

Here and in the sequel, we denote by $\overline{\mathcal{Q}}$ the closure of a set $\mathcal{Q}$.
\end{theorem}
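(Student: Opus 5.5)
We will prove Theorem~\ref{thm:Weak-strongUniqueness} by a relative energy (modulated energy) argument adapted to the moving-domain setting, following the strategy of \cite{caggio2021measure} but with the compressible relative entropy. The main difficulty is that the two solutions live on different geometries $\mathcal{B}_{1,t}\neq\mathcal{B}_{2,t}$. Therefore the strong solution cannot be inserted directly as a test function in the measure-valued formulation.

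\textbf{Step 1: transported strong solution.} Following the standard device (as in Gérard-Varet--Hillairet type arguments), we construct a time-dependent, volume-preserving diffeomorphism $\Phi_t$ of $\Omega$. It equals the rigid motion $x\mapsto X_1(t)+\mathbb{O}_{1,t}\mathbb{O}_{2,t}^{\top}(x-X_2(t))$ near $\mathcal{B}_{2,t}$, and the identity near $\partial\Omega$. The distance assumption $\mathrm{dist}(\partial\Omega,\partial\mathcal{B}_{i,t})>3\sigma/2$ guarantees room for a cut-off in a $\sigma$-neighbourhood. Pushing forward $(\rho_{F2},u_{F2},\rho_{B2},u_{B2})$ by $\Phi_t$ gives a field $(\tilde\rho,\tilde u)$ with the following properties. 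On $\mathcal{B}_{1,t}$, $\tilde u$ is rigid with velocities $(\tilde V,\tilde\omega)$ close to $(V_2,\omega_2)$, and it satisfies $\tilde u\cdot n=\tilde u_B\cdot n$ on $\partial\mathcal{B}_{1,t}$. It therefore becomes an admissible test function for the measure-valued solution. Moreover, $(\tilde\rho,\tilde u)$ solves the compressible Euler/rigid system up to residuals bounded by $C(|V_1-V_2|+|\omega_1-\omega_2|+|X_1-X_2|+|\mathbb{O}_1-\mathbb{O}_2|)$ in $C^1$. The regularity of the strong solution and the condition $n\cdot(\nabla u_{F2}-\nabla u_{B2})n=0$ are used to control $\nabla\Phi_t-\mathbb{I}$ and the boundary terms.

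\textbf{Step 2: relative energy inequality.} Define
\[
\mathcal{E}(t)=\int_\Omega\Big\langle\nu_{t,x};\tfrac12\big|\sqrt{s}\,m/\sqrt{s}\cdot\ldots\big|\Big\rangle
\]
More concretely, $\mathcal{E}(t)$ is the fluid part $\int\langle\nu;\frac12|m-\sqrt{s}\,\tilde u|^2+P(s)-P'(\tilde\rho)(s-\tilde\rho)-P(\tilde\rho)\rangle$, with $P(s)=s^\gamma/(\gamma-1)$. To it we add the rigid part $\frac12M|V_1-\tilde V|^2+\frac12\mathcal{J}_1(\omega_1-\tilde\omega)\cdot(\omega_1-\tilde\omega)$ and the dissipation defect $\mathcal{D}(t)$. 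Combining the energy inequality in Definition~\ref{def:definitionofYoungmeasure} with the momentum equation tested by $\tilde u$ and the continuity equation tested by $P'(\tilde\rho)$ and $\frac12|\tilde u|^2$, we obtain an inequality of the form
\[
\mathcal{E}(t)\le \mathcal{E}(0)+C\int_0^t\|\nabla\tilde u\|_\infty\big(\mathcal{E}(s)+\mathcal{D}(s)\big)\,ds+\int_0^t R(s)\,ds .
\]
Here $R$ collects the residuals from Step 1. The concentration defect terms are absorbed using the standard condition that the defect measures are dominated by $\mathcal{D}$. The convective and pressure terms are handled in the usual way: they are quadratic in the difference, thanks to $\inf\rho_{F2}>0$ and the strict convexity of $P$, which holds for every $\gamma>1$.

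\textbf{Step 3: closing with Gronwall.} We bound $R(s)\le C(|X_1-X_2|^2+|\mathbb{O}_1-\mathbb{O}_2|^2+\mathcal{E}+\mathcal{D})$. The geometric differences are controlled via $\frac{d}{dt}(X_1-X_2)=V_1-V_2$ and the rotation ODE \eqref{eq:rotation-ode}. This gives $|X_1-X_2|^2+|\mathbb{O}_1-\mathbb{O}_2|^2\le C\int_0^t(\mathcal{E}+\mathcal{D})$, using $|V_1-\tilde V|^2\lesssim\mathcal{E}$ and $|\tilde V-V_2|\lesssim$ geometric differences. The data coincide at $t=0$, so Gronwall yields $\mathcal{E}\equiv\mathcal{D}\equiv0$ and equal trajectories, hence $\mathcal{B}_{1,t}=\mathcal{B}_{2,t}$ and $\Phi_t=\mathrm{id}$. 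The vanishing of the convex relative entropy then forces $\nu$ to be the Dirac mass at the strong solution.

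\textbf{Main obstacle.} I expect the main difficulty to be Step 1 together with the estimate of $R$. The transported field must be a legitimate test function, which requires correct boundary behaviour on $\partial\mathcal{B}_{1,t}$ and regularity in time despite $V_1,\omega_1$ being only $L^\infty$. Its error must also be genuinely quadratic, or linear times $\sqrt{\mathcal{E}}$, in the differences. I would first build $\Phi_t$ through the flow of a divergence-free, cut-off extension of the relative rigid velocity. I would then verify that the pressure boundary term $\int_{\partial\mathcal{B}}p\,n\cdot(\cdot)$ matches the rigid-body equations exactly, so that only commutator terms survive.
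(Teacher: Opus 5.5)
Your proposal is correct and follows essentially the paper's route. Your $\Phi_t$ is the paper's $\tilde Z_1=Z_1\circ Z_2^{-1}$, built from flows of the divergence-free cut-off fields $\Lambda_i$. The strong solution is pushed forward to $(R,U,\tilde u_B)$ with residuals $\mathcal{H},\mathcal{G}$, and the relative energy (fluid relative entropy plus $\frac12\int_{\mathcal{B}_t}\rho_B|u_B-\tilde u_B|^2$) is closed by Gronwall using \eqref{ersatzpoincare}. The only cosmetic difference is that the paper controls the geometric mismatch through Lemma~\ref{PropertiesonTransform} ($L^2$-in-time norms of $V_1-\tilde V_2$ and $\omega_1-\tilde\omega_2$), rather than through your explicit ODEs for $X_1-X_2$ and $\mathbb{O}_1-\mathbb{O}_2$.

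One small correction: the hypothesis $n\cdot(\nabla u_{F2}-\nabla u_{B2})n=0$ is what makes the transported field admissible, i.e.\ in $\tilde{\mathcal{V}}_{\mathcal{B}^T}$, and it is preserved because $\Phi_t$ is rigid near the body. It is not used to control $\nabla\Phi_t-\mathbb{I}$.
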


Since the rigid body motion depends on the surrounding fluid velocity, and thus on viscosity, the admissible test functions $\phi$ in the weak formulation of the inviscid fluid-structure interaction system are generally not suitable for the viscous cases. Therefore, in the analysis of the vanishing viscosity limit, it is essential to construct appropriately modified test functions $\phi^\epsilon$ that remain compatible with the viscous problem. However, the nonlinear terms $\rho^\epsilon u^\epsilon\otimes u^\epsilon\chi_{\mathcal{F}_t^\epsilon}$ and $(\rho^\epsilon)^\gamma\chi_{\mathcal{F}_t^\epsilon}$ are uniformly bounded only in $L^1(\Omega)$, so even a slight modification of the test function within a thin layer near $\partial\mathcal{B}_t^\epsilon$ may affect the convergence behavior of the sequence
\begin{equation*}
    \int_0^T\int_{\Omega}\qnt{\rho^\epsilon u^\epsilon\otimes u^\epsilon\chi_{\mathcal{F}_t^\epsilon}+(\rho^\epsilon)^\gamma\chi_{\mathcal{F}_t^\epsilon}\mathbb{I}}:\nabla\phi^\epsilon \mathrm{d}x\mathrm{d}t. 
\end{equation*}
To ensure that the limiting measure-valued solution is independent of the particular construction of the test functions, and that the corresponding defect measures generated by these nonlinear terms are controlled by the energy dissipation $\mathcal{D}(t)$, we impose a stronger regularity requirement: in contrast to the viscous case, where only $C^0$ regularity of the normal component at the interface is needed, the inviscid formulation requires full $C^1$ regularity of the test functions across the fluid-solid interface.

The remaining part of the paper is organized as follows. 
In Section~2, we recall several fundamental results on generalized Young measures and formulate the precise definition of a measure-valued solution for the inviscid fluid-solid interaction system. We then verify that solutions to the viscous fluid-structure interaction problem with Navier slip boundary conditions converge, in the vanishing viscosity limit, to a dissipative Young measure-valued solution of the inviscid system. 
In Section~3, we introduce a suitable coordinate transformation to redefine the strong solution on the domain associated with the dissipative measure-valued solution. Using a detailed relative energy analysis, we show that the dissipative solution coincides with this redefined strong solution. Consequently, the redefined strong solution agrees with the original one, which establishes Theorem~\ref{thm:Weak-strongUniqueness}.

\section{Existence of Young Measure-Valued Solutions}
\subsection{Fundamental Results for Generalized Young Measures}

In this article, we impose no additional regularity assumptions beyond basic integrability on the initial data for the inviscid compressible fluid-rigid body system. Consequently, we formulate the problem within the framework of Young measure-valued solutions. We refer to \cite{gwiazda2015weak} and the references therein for the theory of Young measure-valued solutions to compressible Euler systems, particularly those capturing concentration effects. 

An arguably more economic formulation is the concept of a \emph{dissipative measure-valued solution} introduced in \cite{MR3567640}, where concentration effects are incorporated into a general defect measure.

Let $\mathcal{P}(\mathbb{R}^{l+m})$ denote the set of probability measures on $\mathbb{R}^{l+m}$, and let $\mathcal{M}(X)$ denote the set of finite Radon measures on a domain $X$. We recall the following fundamental result concerning the convergence of bounded sequences to Young measures.

\begin{lemma}\label{lemma:YoungMeasure1}
For $1<p,q<\infty$, denote by $C_{p,q}$ the space of functions $f\in C(\mathbb{R}^l \times \mathbb{R}^m; \mathbb{R})$ such that there exists $C>0$ for which
\begin{equation*}
|f(\lambda_1,\lambda')|\leq C(1+|\lambda_1|^\gamma+|\lambda'|^2)    
\end{equation*}
for all $(\lambda_1,\lambda')\in \mathbb{R}^l \times \mathbb{R}^m$.

Let $\{(u_k, w_k)\}_{k\in\mathbb{N}} \subset \mathbb{R}^{l+m}$ be a sequence uniformly bounded in $L^{\infty}\big([0,T]; L^p(\Omega)\times L^q(\Omega)\big)$. Then there exist a subsequence (not relabeled), a Young measure $\nu \in L^{\infty}_w(\Omega; \mathcal{P}(\mathbb{R}^{l+m}))$, and a family of concentration defect measures
\[
m_t^{(\cdot)}: C_{p,q} \longrightarrow \mathcal{M}(\overline{\Omega}),
\]
such that 
\begin{equation}\label{eq:YoungmeasureConvergence}
    f(u_n(t,x), w_n(t,x))\,\mathcal{L} \overset{*}{\rightharpoonup}
    \int_{\mathbb{R}^{l+m}} f(\lambda_1, \lambda')\,\mathrm{d}\nu\,\mathcal{L}
    + m^f_t \otimes \mathrm{d}t,
\end{equation}
for any $f \in C_{p,q}$. 
Here $\mathcal{L}$ denotes the Lebesgue measure on $\mathbb{R}^4$, and $\mathrm{d}t$ is the Lebesgue measure on $\mathbb{R}$.

Furthermore, if 
\[
\sup_{n\in\mathbb{N}} \| f(u_n, w_n) \|_{L^{r}((0,T)\times\Omega)} < +\infty
\]
for some $r > 1$, then
\begin{equation}
    m^f_t = 0.
\end{equation}
\end{lemma}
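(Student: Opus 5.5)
The plan is to obtain the lemma from the fundamental theorem of Young measures combined with a weak-$*$ compactness argument for the nonnegative parts of the nonlinear quantities. Throughout, the growth exponents $\gamma$ and $2$ in $C_{p,q}$ are matched to the integrability $p=\gamma$, $q=2$, which is what makes $f(u_n,w_n)$ bounded in $L^\infty(0,T;L^1(\Omega))$ for every $f\in C_{p,q}$.

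\textbf{Step 1 (oscillations).} I regard $(u_n,w_n)$ as measurable maps on $Q=(0,T)\times\Omega$. They are bounded in $L^1(Q)$, so the family is tight (the uniform bound gives $\mathcal{L}\{|(u_n,w_n)|>R\}\le C/R$). The classical fundamental theorem of Young measures (Ball's version) then gives a subsequence and a parametrized probability measure $\nu_{t,x}\in\mathcal{P}(\mathbb{R}^{l+m})$, measurable in $(t,x)$, such that $g(u_n,w_n)\rightharpoonup\langle\nu,g\rangle$ weakly-$*$ in $L^\infty(Q)$ for every $g\in C_0(\mathbb{R}^{l+m})$. Fatou's lemma on truncations shows that $\langle \nu,|\lambda_1|^\gamma+|\lambda'|^2\rangle\in L^\infty(0,T;L^1(\Omega))$. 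Hence $\langle\nu,f\rangle$ is well defined for all $f\in C_{p,q}$.

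\textbf{Step 2 (concentrations).} For $f\in C_{p,q}$, the sequence $f(u_n,w_n)\,\mathcal{L}$ is bounded in $\mathcal{M}([0,T]\times\overline\Omega)$. Its total variation on each time slice is bounded uniformly in $t$. By Banach--Alaoglu, and since $C_{p,q}$ restricted to a countable dense family is separable with respect to the weighted sup norm $\|f/(1+|\lambda_1|^\gamma+|\lambda'|^2)\|_\infty$, a diagonal argument extracts one subsequence along which $f(u_n,w_n)\mathcal{L}\overset{*}{\rightharpoonup}\mu^f$ for all $f$ in the family. Density then extends this to all of $C_{p,q}$: the error is controlled by the weighted norm times the uniform bound. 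I define $m^f:=\mu^f-\langle\nu,f\rangle\mathcal{L}$.

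To identify $m^f$ as a measure, I split off a truncation. Write $f=f\psi_R+f(1-\psi_R)$ with a cutoff $\psi_R$. The term $f\psi_R\in C_c$ converges to $\langle\nu,f\psi_R\rangle$ by Step 1. The tail $f(1-\psi_R)$ is dominated by $C(1+|\lambda_1|^\gamma+|\lambda'|^2)(1-\psi_R)$. Letting $R\to\infty$, $m^f$ is the limit of the tail measures. This exhibits $m^f$ as a bounded signed measure, nonnegative when $f\ge0$, with $|m^f|\le C\,m^{1+|\lambda_1|^\gamma+|\lambda'|^2}$.

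\textbf{Step 3 (disintegration in time).} I test the measure $\mu^f$ with $\phi(t)\psi(x)$. The uniform $L^\infty_t$ bound shows that its time marginal is absolutely continuous with respect to $dt$ with bounded density. The disintegration theorem then writes $m^f=m^f_t\otimes dt$ with $m^f_t\in\mathcal{M}(\overline\Omega)$ for a.e.\ $t$. I expect this to be the main technical obstacle: making $t\mapsto m_t^f$ a weak-$*$ measurable family simultaneously for all $f$, and ensuring the map $f\mapsto m^f_t$ is linear. The approach is to do both on the countable dense family and extend by the domination $|m^f_t|\le C m_t^{1+|\lambda_1|^\gamma+|\lambda'|^2}$.

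\textbf{Step 4 (vanishing defect).} If $\sup_n\|f(u_n,w_n)\|_{L^r(Q)}<\infty$ with $r>1$, the sequence is equi-integrable. On the set $\{|(u_n,w_n)|>R\}$, H\"older's inequality gives
\[
\int_{\{|(u_n,w_n)|>R\}}|f(u_n,w_n)|\le \|f(u_n,w_n)\|_{L^r}\,\mathcal{L}\{|(u_n,w_n)|>R\}^{1-1/r}\to0
\]
uniformly in $n$ as $R\to\infty$. Thus the tail measures from Step 2 vanish, so $m^f=0$, and hence $m^f_t=0$ for a.e.\ $t$. By Dunford--Pettis this is the same as weak $L^1$ convergence of $f(u_n,w_n)$ to $\langle\nu,f\rangle$.
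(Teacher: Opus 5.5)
The paper only recalls this lemma and gives no proof, so your argument has to stand on its own. It fails at the diagonal step of Step~2.

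Set $W:=1+|\lambda_1|^\gamma+|\lambda'|^2$. The map $f\mapsto f/W$ is an isometry from $C_{p,q}$, with your weighted norm, onto $C_b(\mathbb{R}^{l+m})$. That space is not separable: it contains an isometric copy of $\ell^\infty$, spanned by disjoint bumps centred at points tending to infinity. So the diagonal argument gives a common subsequence only for a countable family of $f$'s, and there is no countable dense set to extend from ``by density''.

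This is not fixable by a cleverer argument, because the statement with one subsequence valid for every $f\in C_{p,q}$ is false. Take:
\begin{itemize}
\item points $a_n\in\mathbb{R}^{l+m}$ with $|a_n|\ge 1$, $|a_n|\to\infty$ and $|a_n-a_k|\ge 1$ for $n\neq k$;
\item sets $A_n\subset\Omega$ with $|A_n|=1/(T\,W(a_n))$;
\item the sequence $z_n:=a_n\chi_{(0,T)\times A_n}$.
\end{itemize}
Then $\sup_t\int_\Omega W(z_n)\,\mathrm{d}x\le 1/T+|\Omega|$, so the hypothesis holds. Now fix any subsequence $(n_k)$. Let $g\in C_b$ be a sum of disjoint bumps with $g(0)=0$ and $g(a_{n_k})=(-1)^k$, and put $f:=gW\in C_{p,q}$. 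Then $\int_0^T\!\int_\Omega f(z_{n_k})\,\mathrm{d}x\,\mathrm{d}t=(-1)^k$, so $f(z_{n_k})\mathcal{L}$ has no weak-$*$ limit.

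There are two correct versions you can prove instead.
\begin{itemize}
\item \emph{Countable family.} Restrict Step~2 to a fixed countable family $\{f_j\}\subset C_{p,q}$, with the subsequence depending on the family. Your Steps~1--4 then prove the lemma; the linearity and measurability issue in Step~3 is handled on the $\mathbb{Q}$-span. This is all the paper ever uses, since only finitely many nonlinearities are passed to the limit: $\lambda_1$, $\sqrt{\lambda_1}\lambda'$, $\lambda'\otimes\lambda'+\lambda_1^\gamma\mathbb{I}$ and $\frac{|\lambda'|^2}{2}+\frac{\lambda_1^\gamma}{\gamma-1}$.
\item \emph{Single subsequence for a restricted class.} Restrict to $f$ with an anisotropic recession function: the joint limit of $f(s^{2}\beta_1',s^{\gamma}\beta_2')/s^{2\gamma}$ as $s\to\infty$, $(\beta_1',\beta_2')\to(\beta_1,\beta_2)$ exists. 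This is the Alibert--Bouchitt\'e framework used in \cite{gwiazda2015weak}. There $f/W$ extends continuously to a metrizable compactification, the class is separable, and your diagonal-plus-density argument goes through unchanged.
\end{itemize}

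The rest of your proposal is sound:
\begin{itemize}
\item tightness and Fatou in Step~1;
\item the truncation identification of $m^f$, with $|m^f|\le C\,m^W$;
\item the disintegration via the bounded time marginal;
\item the H\"older and equi-integrability argument giving $m^f=0$.
\end{itemize}
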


Moreover, following \cite[Lemma 2.1]{MR3567640}, the following comparison principle for concentration defect measures holds.

\begin{lemma}\label{lemma:comparison}
Let $\{(u_k,w_k)\}_{k\in \mathbb{N}}, \nu,$ and $m^f$ be as in Lemma~\ref{lemma:YoungMeasure1}. 
Let $G \in C_0(\mathbb{R}^{l+m}; \mathbb{R}^+)$ and $F \in C_0(\mathbb{R}^{l+m}; \mathbb{R})$ satisfy
\[
F(z) \le |G(z)|, \quad \forall\, z \in \mathbb{R}^{l+m},
\]
and assume
\[
\sup_{n\in \mathbb{N}} \| G(u_n, w_n) \|_{L^1([0,T]\times\Omega)} < +\infty.
\]
Then the corresponding concentration defect measures satisfy
\begin{equation}
    |m^F| \le m^G.
\end{equation}
\end{lemma}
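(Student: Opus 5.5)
The plan is to read the hypothesis as the pointwise bound $|F(z)|\le G(z)$ for all $z$, which is what the conclusion requires. We then reduce the claim to two facts: the map $f\mapsto m^f$ is linear, and $m^h\ge 0$ whenever $h\ge 0$ generates an $L^1$-bounded sequence. Granting these, put $h_\pm:=G\pm F$. Then $h_\pm\ge 0$ and $|h_\pm|\le 2G$, so $\sup_n\|h_\pm(u_n,w_n)\|_{L^1}<\infty$. Hence $m^{h_\pm}=m^G\pm m^F\ge 0$, that is, $-m^G\le m^F\le m^G$ as measures on $\overline{\Omega}$ for a.e.\ $t$. This is exactly $|m^F|\le m^G$.

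Linearity is immediate from the representation \eqref{eq:YoungmeasureConvergence}. Weak-$*$ limits are linear in $f$ along the common subsequence, and so is $f\mapsto \int f\,\mathrm{d}\nu$. So $m^{aF+bG}_t\otimes \mathrm{d}t=a\,m^F_t\otimes\mathrm{d}t+b\,m^G_t\otimes\mathrm{d}t$. We must check that $\int |f|\,\mathrm{d}\nu_{t,x}<\infty$ for a.e.\ $(t,x)$ when $f$ is dominated by $G$; this is part of the positivity argument below.

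For positivity, fix $h\ge 0$ continuous with $\sup_n\|h(u_n,w_n)\|_{L^1}<\infty$, and split it with the truncation $T_k(h):=\min(h,k)$. The function $T_k(h)$ is bounded and continuous, so the sequence $T_k(h)(u_n,w_n)$ is bounded in every $L^r$. By the second part of Lemma~\ref{lemma:YoungMeasure1} it converges weakly-$*$ to $\int T_k(h)\,\mathrm{d}\nu$ with no defect. The remainder $h-T_k(h)\ge 0$ has a weak-$*$ limit, a nonnegative measure $\mu_k$. Hence
\[
m^h_t\otimes\mathrm{d}t=\mu_k-\Big(\int (h-T_k(h))\,\mathrm{d}\nu\Big)\mathcal{L}.
\]
Next, by Fatou/lower semicontinuity (testing with nonnegative $\varphi$), $\int\!\!\int h\,\mathrm{d}\nu\,\mathrm{d}x\,\mathrm{d}t\le \liminf_n\|h(u_n,w_n)\|_{L^1}<\infty$, so $\int h\,\mathrm{d}\nu_{t,x}<\infty$ a.e. Monotone convergence then gives $\int (h-T_k(h))\,\mathrm{d}\nu\to 0$ in $L^1((0,T)\times\Omega)$ as $k\to\infty$. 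Testing against $\varphi\ge 0$ and letting $k\to\infty$ yields $\langle m^h,\varphi\rangle\ge -\lim_k\int\!\!\int\varphi\int(h-T_kh)\,\mathrm{d}\nu=0$.

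The main obstacle is bookkeeping rather than analysis. We need that the Young measure and the defects are taken along one common subsequence for the countable family $\{G,F,T_k(h_\pm)\}$, which Lemma~\ref{lemma:YoungMeasure1} provides. We also need the $\nu$-integrability of $h$, which is used to kill the truncation error. We also pass from the space-time inequality $m^{h}\otimes \mathrm{d}t\ge 0$ to the time-sliced statement for a.e.\ $t$; this follows by testing with products $\psi(t)\varphi(x)$ with $\psi,\varphi\ge 0$ from a countable dense family.
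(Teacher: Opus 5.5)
Your proof is correct. You were also right to read the hypothesis as $|F|\le G$ with $F,G$ merely continuous: for such functions the printed one-sided bound $F\le|G|$ would not give the conclusion, while a literal $C_0$ (vanishing at infinity) would make both defects vanish trivially. The paper does not prove the lemma itself but imports it from \cite[Lemma 2.1]{MR3567640}, and your argument (reduce to nonnegativity of the defects of $G\pm F$, then prove that by truncation, with $\langle\nu,G\rangle\in L^1$ coming from lower semicontinuity) is essentially the standard proof of that result.
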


\subsection{Definition of Dissipative Young Measure-Valued Solutions}

For notational simplicity, we define
\[
\mathcal{H}^T := \bigcup_{s\in(0,T)} \{s\}\times\mathcal{H}_s,
\]
where $\mathcal{H}_t$ is a time-dependent domain. In particular, we will use the following shorthand:
\[
\Omega^T, \ \overline{\Omega^T}, \ \mathcal{F}^T, \ \overline{\mathcal{F}^T}, \ \mathcal{B}^T.
\]

We further define, for $(\lambda_1, \lambda') \in \mathbb{R}^{1+3}$ and $\nu \in L^{\infty}_w(\Omega; \mathcal{P}(\mathbb{R}^{1+3}))$,
\[
\langle f(\lambda_1, \lambda'), \nu \rangle := \int_{\mathbb{R}^{1+3}} f(\lambda_1, \lambda') \, \mathrm{d}\nu.
\]

Next, we introduce the class of admissible test functions used in the definition of weak solution to the viscous fluid-structure interaction model:
\begin{equation}\label{eq:DefofTestFunction}
    \begin{aligned}
 \mathcal{V}_{\mathcal{B}^T} :=\left\{
\phi \in C([0,T]; L^2(\Omega)) \; \middle|\; 
\begin{aligned}
&\text{there exist } \phi^F \in C^\infty([0,T]; C^\infty(\overline{\Omega})),\\& 
\phi^B \in C^\infty([0,T]; \mathcal{R}(\Omega)) \text{ such that:~} \\& \phi\big|_{\mathcal{F}_t} = \phi^F, 
\phi\big|_{ \mathcal{B}_t} = \phi^B, \phi^F \cdot n \big|_{\partial \Omega}= 0 \\
& (\phi^F-\phi^B) \cdot n\big|_{\partial \mathcal{B}_t} = 0,
\end{aligned}
\right\},
    \end{aligned}
\end{equation}
where we recall
\[
\mathcal{R}(\Omega) := \{\xi:\Omega\to \mathbb{R}^3 \big| \xi=\alpha+\eta\times(\cdot-X), \alpha,\eta,X\in \mathbb{R}^3 \},
\]
and $n$ denotes the unit outward normal vector to $\partial \mathcal{B}_t$. Furthermore, we define the following test functions used in the definition of dissipative Young measure-valued solution to our inviscid fluid-structure interaction model:
\begin{equation}
    \tilde{\mathcal{V}}_{\mathcal{B}^T}:=\{\phi\in \mathcal{V}_{\mathcal{B}^T}\big| n\cdot\nabla(\phi^F-\phi^B)n\big|_{\partial \mathcal{B}_t}=0\}.
\end{equation} 

\begin{remark}
For any $\phi \in \tilde{\mathcal{V}}_{\mathcal{B}^T}$, the constraints 
\[
(\phi^F - \phi^B)\cdot n|_{\partial\mathcal{B}_t} = 0, \quad
n \cdot \nabla(\phi^F - \phi^B)n|_{\partial\mathcal{B}_t} = 0,
\]
imply that $\phi \cdot n$ possesses $C^1$ regularity across $\partial\mathcal{B}_t$.
\end{remark}

We are now ready to state the precise definition of a dissipative Young measure-valued solution for the inviscid compressible fluid-rigid body system.

\begin{definition}\label{def:definitionofYoungmeasure}
A triple $(\nu, \mathcal{D}, \mathcal{B}_t)$ is called a \emph{dissipative Young measure-valued solution} to the system \eqref{eq:model}–\eqref{eq:ic} on $t \in [0,T]$ if the following properties hold:

\begin{enumerate}[(1)]
\item \textbf{Basic regularity:}  
$\nu \in L^{\infty}_w(\Omega^T; \mathcal{P}(\mathbb{R}^{1+3}))$, 
$\mathcal{D} \in L^\infty([0,T])$ with $\mathcal{D} \ge 0$, 
and $\mathcal{B}_t$ is a time-dependent domain.

\item \textbf{Rigid motion of $\mathcal{B}_t$:}  
There exist absolutely continuous functions $V(t)$ and $\omega(t)$ such that
\[
\nu|_{\mathcal{B}^T} = \delta_{(\rho_B(t,x), \sqrt{\rho_B(t,x)}\,u_B(t,x))},
\]
where $u_B(t,x) = V(t) + \omega(t)\times (x - X(t))$. For every $\phi(t,x) \in C^{\infty}(\overline{\Omega^T}; \mathbb{R})$,
\begin{equation}\label{eq:MassCRigidP}
\begin{aligned}
&\iint_{\mathcal{B}^t} \rho_B \, \phi_\tau + \rho_B u_B \cdot \nabla \phi \, \mathrm{d}x\,\mathrm{d}\tau
+ \int_{\mathcal{B}_0} \rho_0 \phi(0,x)\,\mathrm{d}x
- \int_{\mathcal{B}_t} \rho_B \phi(t,x)\,\mathrm{d}x = 0, \\
&\iint_{\mathcal{B}^t} \phi_\tau + u_B \cdot \nabla \phi \, \mathrm{d}x\,\mathrm{d}\tau
+ \int_{\mathcal{B}_0} \phi(0,x)\,\mathrm{d}x
- \int_{\mathcal{B}_t} \phi(t,x)\,\mathrm{d}x = 0.
\end{aligned}
\end{equation}

\item \textbf{Conservation of mass:}  
%There exists $\nu^C \in L^1([0,T]; \mathcal{M}(\overline{\mathcal{F}_t}))$ satisfying
%\[
%\Big|\int_{\mathcal{F}_t} \nabla %\psi \, \mathrm{d}\nu^C\Big|
%\le \zeta_C(t)\,\mathcal{D}(t)\,\|\psi(t,\cdot)\|_{C^1(\mathcal{F}_t)},
%\]
%for some $\zeta_C(\cdot) \in L^1([0,T])$, such that 
For every $\psi(t,x) \in C^{\infty}(\overline{\Omega^T}; \mathbb{R})$,
\begin{equation}\label{MVS MassC}
\begin{aligned}
0 &= \iint_{\mathcal{F}^t} \langle \lambda_1, \nu \rangle \psi_\tau \mathrm{d}x\,\mathrm{d}\tau 
  + \langle \sqrt{\lambda_1}\lambda', \nu \rangle \cdot \nabla \psi \, \mathrm{d}x\,\mathrm{d}\tau \\
&\quad + \int_{\mathcal{F}_0} \langle \lambda_1, \nu_0 \rangle \psi(0,x)\,\mathrm{d}x
 - \int_{\mathcal{F}_t} \langle \lambda_1, \nu \rangle \psi(t,x)\,\mathrm{d}x.
\end{aligned}
\end{equation}

\item \textbf{Balance of momentum:}  
There exists $\nu^M \in L^1([0,T]; \mathcal{M}(\overline{\mathcal{F}_t}))$ satisfying
\begin{equation}\label{ersatzpoincare}
\Big|\int_{\mathcal{F}_t} \nabla \Phi^F \, \mathrm{d}\nu^M\Big|
\le \zeta(t)\,\mathcal{D}(t)\,\|\Phi^F(t,\cdot)\|_{C^1(\mathcal{F}_t)},
\end{equation}
for some $\zeta(\cdot) \in L^1([0,T])$, such that for any $\Psi \in \tilde{\mathcal{V}}_{\mathcal{B}^T}$ with corresponding $(\Psi^F, \Psi^B)$ from \eqref{eq:DefofTestFunction},
\begin{equation}\label{MVS MomentumC}
\begin{aligned}
0 &= \iint_{\mathcal{F}^t}
    \langle \sqrt{\lambda_1}\lambda', \nu \rangle \Psi^F_\tau
   + \langle \lambda'\otimes\lambda' + \lambda_1^\gamma \mathbb{I}, \nu \rangle
      \!:\! \nabla \Psi^F \, \mathrm{d}\nu\,\mathrm{d}\tau
   + \iint_{\mathcal{B}^t} \rho_B u_B \Psi^B_\tau \,\mathrm{d}x\,\mathrm{d}t
   + \iint_{\mathcal{F}^t} \nabla \Psi^F \, \mathrm{d}\nu^M\,\mathrm{d}\tau \\
&\quad + \int_{\Omega} \langle \sqrt{\lambda_1}\lambda', \nu_0 \rangle \Psi(0,x)\,\mathrm{d}x
 - \int_{\mathcal{F}_t} \langle \sqrt{\lambda_1}\lambda', \nu \rangle \Psi^F(t,x)\,\mathrm{d}x
 - \int_{\mathcal{B}_t} \langle \sqrt{\lambda_1}\lambda', \nu \rangle \Psi^B(t,x)\,\mathrm{d}x,
\end{aligned}
\end{equation}
where
\[
\nu_0 = \delta_{(\rho_F^0, \sqrt{\rho_F^0}\,u_F^0)} \chi_{\mathcal{F}_0}
+ \delta_{(\rho_B^0, \sqrt{\rho_B^0}(V_0 + \omega_0 \times (\cdot-X_0)))} \chi_{\mathcal{B}_0}.
\]

\item \textbf{Dissipation of kinetic energy:}  
For all $t \in [0,T]$,
\begin{equation}\label{eq:EnergyMVS}
\begin{aligned}
E_{mvs}(t)
&:= \int_{\mathcal{F}_t} \Big\langle \frac{|\lambda'|^2}{2} + \frac{\lambda_1^\gamma}{\gamma - 1}, \nu \Big\rangle \mathrm{d}x
   + \int_{\mathcal{B}_t} \frac{\rho_B |u_B|^2}{2} \, \mathrm{d}x
   + \mathcal{D}(t) \\
&\le \int_{\mathcal{F}_0} \frac{(\rho^0_F)^\gamma}{\gamma-1} \mathrm{d}x
   + \int_{\Omega} \frac{\rho_F^0 |u^0_F|^2}{2} \mathrm{d}x.
\end{aligned}
\end{equation}
\end{enumerate}
\end{definition}

\begin{remark}
By combining the momentum equation for the fluid in \eqref{eq:model} with the momentum equation for the rigid body stated in Theorem~\ref{thm:ApendixA} in Appendix \ref{App:WeakformofSolid}, we derive the weak formulation of the momentum balance \eqref{MVS MomentumC} for the coupled fluid-rigid body system.
\end{remark}

\begin{remark}
The test functions in the definition of dissipative Young measure-valued solutions can be extended, by a straightforward argument, to the following function classes:
\begin{equation*}
\begin{aligned}
&\phi, \nabla_x \phi \in C\Bigl(\overline{\mathcal{B}^T}\Bigl), \quad
\partial_t \phi \in L^1\bigr([0,T]; C(\overline{\mathcal{B}_t})\bigl) 
\quad \text{in \eqref{eq:MassCRigidP}},\\
&\psi, \nabla_x \psi \in C\Bigl(\overline{\mathcal{F}^T}\Bigr), \quad
\partial_t \psi \in L^1\bigr([0,T]; C(\overline{\mathcal{F}_t})\bigl) 
\quad \text{in \eqref{MVS MassC}},
\end{aligned}
\end{equation*}
and
\begin{equation}
\begin{aligned}
&\Psi^F, \nabla_x \Psi^F \in C\Bigl(\overline{\mathcal{F}^T}\Bigr), \quad
\partial_t \Psi^F \in L^1\bigr([0,T]; C(\overline{\mathcal{F}_t})\bigl),\\
&\Psi^B, \nabla_x \Psi^B \in C\Bigl(\overline{\mathcal{B}^T}\Bigr), \quad
\partial_t \Psi^B \in L^1\bigr([0,T]; C(\overline{\mathcal{B}_t})\bigl),\\
&(\Psi^F - \Psi^B)\cdot n|_{\partial\mathcal{B}_t} = 0, \quad
n \cdot \nabla(\Psi^F - \Psi^B)n|_{\partial\mathcal{B}_t} = 0
\quad \text{in \eqref{MVS MomentumC}}.
\end{aligned}
\end{equation}
Therefore, it is admissible to use the strong solution as a test function in the proof of Theorem~\ref{thm:Weak-strongUniqueness}.
\end{remark}

\subsection{Existence of Young Measure-Valued Solutions}

By a vanishing-viscosity limit of the compressible viscous fluid-rigid body system with Navier-slip boundary conditions, we obtain a dissipative Young measure-valued solution for our fluid-structure model.

The compressible viscous fluid-structure system with Navier boundary conditions is
\begin{equation}\label{eq:Viscoussystem}
\left\{
\begin{aligned}
&\partial_t \rho_F^\epsilon + \operatorname{div}_x(\rho_F^\epsilon u_F^\epsilon) = 0,\\
&\partial_t (\rho_F^\epsilon u_F^\epsilon) + \operatorname{div}_x(\rho_F^\epsilon u_F^\epsilon \otimes u_F^\epsilon + p_F^\epsilon \mathbb{I})
  - \epsilon \operatorname{div}_x \mathcal{T}(u_F^\epsilon) = 0,
  && t \in (0,T),\ x \in \mathcal{F}_t^\epsilon,\\
&\partial_t \rho_B^\epsilon + u_B^\epsilon \cdot \nabla \rho_B^\epsilon = 0,\\
&\frac{\mathrm{d}X^\epsilon}{\mathrm{d}t}(t) = V^\epsilon(t),\\
&M \frac{\mathrm{d}V^\epsilon}{\mathrm{d}t}(t)
  = \int_{\partial \mathcal{B}_t^\epsilon} \big( p_F^\epsilon \mathbb{I} - \epsilon \mathcal{T}(u_F^\epsilon) \big)\cdot n \, \mathrm{d}S,\\
&\big(\mathcal{J}^\epsilon(t)\,\omega^\epsilon(t)\big)'
  = \int_{\partial \mathcal{B}_t^\epsilon} (x - X^\epsilon(t)) \times
     \big( (p_F^\epsilon \mathbb{I} - \epsilon \mathcal{T}(u_F^\epsilon))\, n \big)\, \mathrm{d}S,
  && t \in (0,T),\\
&u_F^\epsilon \cdot n = 0,\quad
  (\mathcal{T}(u_F^\epsilon)\cdot n)\times n = -\epsilon (u_F^\epsilon \times n),
  && t \in (0,T),\ x \in \partial \Omega,\\
&(u_F^\epsilon - u_B^\epsilon)\cdot n = 0,\quad
  (\mathcal{T}(u_F^\epsilon)\cdot n)\times n = -\epsilon \big((u_F^\epsilon - u_B^\epsilon)\times n\big),
  && t \in (0,T),\ x \in \partial \mathcal{B}_t^\epsilon,
\end{aligned}
\right.
\end{equation}
where
\[
p_F^\epsilon = (\rho_F^\epsilon)^\gamma,\qquad
\mathcal{T}(u_F^\epsilon) = \nabla u_F^\epsilon + (\nabla u_F^\epsilon)^{\!\top}
+ (\operatorname{div} u_F^\epsilon)\, \mathbb{I},
\]
and the relations among $\qnt{V^\epsilon, \omega^\epsilon, X^\epsilon, u_B^\epsilon, \mathcal{J}^\epsilon}$ are as in the inviscid case for $\qnt{V,\omega,X,u_B,\mathcal{J}}$.

The weak solution theory for \eqref{eq:Viscoussystem} is established in \cite{nevcasova2022motion}; we summarize it below.

\begin{lemma}\label{lem:ExsitenceofViscousSystem}
Let $\Omega \subset \mathbb{R}^3$ and $\mathcal{B}_0 \subset \Omega$ be regular bounded domains. 
Assume that for some $\sigma > 0$,
\[
\operatorname{dist}(\mathcal{B}_0, \partial \Omega) > 2\sigma.
\]
Let $\gamma > \tfrac{3}{2}$, and suppose that the initial data satisfy
\begin{equation*}
\begin{aligned}
&\rho_F^\epsilon\big|_{t=0} \in L^{\gamma}(\mathcal{F}_0), 
\qquad \rho_F^\epsilon(0,x) \ge 0, 
\qquad \inf\rho_B^\epsilon\big|_{t=0} > 0,\\[2pt]
&\rho_F^\epsilon u_F^\epsilon\big|_{t=0} \in L^{\frac{2\gamma}{\gamma+1}}(\mathcal{F}_0), 
\qquad \big(\rho_F^\epsilon u_F^\epsilon\, 1_{\{\rho_F^\epsilon=0\}}\big)\big|_{t=0} = 0,\\[2pt]
&|u_F^\epsilon|^2\big|_{t=0} \in L^1(\mathcal{F}_0), 
\qquad u_B^\epsilon\big|_{t=0} = \ell_0 + \omega_0 \times (x-X_0), \quad x \in \mathcal{B}_0,
\quad \text{for some }\ell_0,\omega_0 \in \mathbb{R}^3.
\end{aligned}
\end{equation*}

Then there exists a time $T>0$, independent of $\epsilon$, such that system~\eqref{eq:Viscoussystem} admits a finite-energy weak solution on $[0,T)$ satisfying
\[
\mathcal{B}_t^\epsilon \subset \Omega, 
\qquad \operatorname{dist}(\mathcal{B}_t^\epsilon, \partial \Omega) > \tfrac{3\sigma}{2}, 
\quad t \in [0,T).
\]

A triple $(\mathcal{B}_t^\epsilon, \rho^\epsilon(t,x), u^\epsilon(t,x))$ is called a \emph{finite-energy weak solution} to~\eqref{eq:Viscoussystem} if it satisfies the following conditions:

\begin{itemize}
\item[1.] For $t \in [0,T)$,
\[
\mathcal{B}_t^\epsilon \subset \Omega,\quad
\operatorname{dist}(\mathcal{B}_t^\epsilon, \partial \Omega) > 0,\quad
\chi_{\mathcal{B}^\epsilon}(t,x)  \in L^\infty\big([0,T)\times\Omega\big).
\]

\item[2.] $u^\epsilon \in \mathcal{U}_{\mathcal{B}^{\epsilon T}}$, where
\begin{equation}\label{eq:DefofViscosVelocityField}
    \begin{aligned}
 \mathcal{U}_{\mathcal{B}^{\epsilon T}} :=\left\{
u \in L^2([0,T]; L^2(\Omega)) \; \middle|\; 
\begin{aligned}
&\text{there exist } u^F \in L^2([0,T]; H^1(\overline{\Omega})),\\& 
u^B \in L^2([0,T]; \mathcal{R}(\Omega)) \text{ such that:~} \\& u\big|_{\mathcal{F}^{\epsilon T}} = u^F, 
u\big|_{ \mathcal{B}^{\epsilon T}} = u^B, u^F \cdot n \big|_{\partial \Omega}= 0 \\
& u^F \cdot n\big|_{\partial \mathcal{B}^{\epsilon T}} = u^B \cdot n\big|_{\partial \mathcal{B}^{\epsilon T}}
\end{aligned}
\right\}.
    \end{aligned}
\end{equation}

\item[3.] $\rho^\epsilon \ge 0$, $\ \rho^\epsilon \in L^\infty(0,T; L^\gamma(\Omega))$, and $\ \rho^\epsilon |u^\epsilon|^2 \in L^\infty(0,T; L^1(\Omega))$.

\item[4.] For any $\phi_i \in C_c^\infty(\Omega^T;\mathbb{R})$, $i=1,2,3$,
\begin{equation}\label{eq:DefofContinuityEquation}
\begin{aligned}
0 =\ &\int_0^t \!\!\int_{\mathcal{B}_\tau^\epsilon} \rho^\epsilon \,\partial_\tau \phi_1
+ \rho^\epsilon u^\epsilon \cdot \nabla \phi_1 \, \mathrm{d}x\,\mathrm{d}\tau
+ \int_{\mathcal{B}_0} \rho^\epsilon(0,x) \phi_1(0,x)\,\mathrm{d}x\\
&+ \int_0^t \!\!\int_{\mathcal{B}_\tau^\epsilon} \partial_\tau \phi_2
+ u^\epsilon \cdot \nabla \phi_2 \, \mathrm{d}x\,\mathrm{d}\tau
+ \int_{\mathcal{B}_0} \phi_2(0,x)\,\mathrm{d}x\\
&+ \int_0^t \!\!\int_{\mathcal{F}_\tau^\epsilon} \rho^\epsilon \,\partial_\tau \phi_3
+ \rho^\epsilon u^\epsilon \cdot \nabla \phi_3 \, \mathrm{d}x\,\mathrm{d}\tau
+ \int_{\mathcal{F}_0} \rho^\epsilon(0,x) \phi_3(0,x)\,\mathrm{d}x\\
&- \int_{\mathcal{B}_t^\epsilon} \rho^\epsilon(t,x) \phi_1(t,x)\,\mathrm{d}x
- \int_{\mathcal{B}_t^\epsilon} \phi_2(t,x)\,\mathrm{d}x
- \int_{\mathcal{F}_t^\epsilon} \rho^\epsilon(t,x) \phi_3(t,x)\,\mathrm{d}x.
\end{aligned}
\end{equation}

\item[5.] For any $\phi \in \mathcal{V}_{\mathcal{B}^{\epsilon T}}$ with corresponding $(\phi^F,\phi^B)$ as in \eqref{eq:DefofTestFunction},
\begin{equation}\label{eq:ViscousMomentumEstimates}
\begin{aligned}
&- \int_0^t \!\!\int_{\mathcal{F}_\tau^\epsilon} \rho^\epsilon u^\epsilon \cdot \partial_\tau \phi^F
  - \int_0^t \!\!\int_{\mathcal{B}_\tau^\epsilon} \rho^\epsilon u^\epsilon \cdot \partial_\tau \phi^B
  - \int_0^t \!\!\int_{\mathcal{F}_\tau^\epsilon} (\rho^\epsilon u^\epsilon \otimes u^\epsilon) : \nabla \phi^F \\
&\quad + \int_0^t \!\!\int_{\mathcal{F}_\tau^\epsilon} \big( \epsilon \mathcal{T}(u^\epsilon) - p_F^\epsilon \mathbb{I} \big) : \nabla \phi^F
  + \epsilon \int_0^t \!\!\int_{\partial \Omega} (u_F^\epsilon \times n)\cdot (\phi^F \times n) \, \mathrm{d}S\,\mathrm{d}\tau \\
&\quad + \epsilon \int_0^t \!\!\int_{\partial \mathcal{B}_\tau^\epsilon}
          \big( (u_F^\epsilon - u_B^\epsilon)\times n \big)\cdot \big( (\phi^F - \phi^B)\times n \big)\, \mathrm{d}S\,\mathrm{d}\tau \\
&= \int_{\Omega} \rho^\epsilon u^\epsilon \cdot \phi \big|_{\tau=0}\, \mathrm{d}x
 - \int_{\Omega} \rho^\epsilon u^\epsilon \cdot \phi \big|_{\tau=t}\, \mathrm{d}x.
\end{aligned}
\end{equation}

\item[6.] (Energy inequality)
\begin{equation}\label{eq:EnergyInequalityofVisSys}
\begin{aligned}
E^\epsilon(t)
&+ \epsilon \int_0^t \!\!\int_{\mathcal{F}_\tau^\epsilon} \big( |\nabla u_F^\epsilon|^2 + |\operatorname{div} u_F^\epsilon|^2 \big)\,\mathrm{d}x\,\mathrm{d}\tau
+ \epsilon \int_0^t \!\!\int_{\partial \Omega} |u_F^\epsilon \times n|^2 \,\mathrm{d}S\,\mathrm{d}\tau\\
&+ \epsilon \int_0^t \!\!\int_{\partial \mathcal{B}_\tau^\epsilon} |(u_F^\epsilon - u_B^\epsilon)\times n|^2 \,\mathrm{d}S\,\mathrm{d}\tau
\ \le\ E^\epsilon(0),
\end{aligned}
\end{equation}
where
\[
E^\epsilon(t) := \int_{\Omega} \frac{\rho^\epsilon |u^\epsilon|^2}{2}\, \mathrm{d}x
+ \int_{\mathcal{F}_t^\epsilon} \frac{(\rho^\epsilon)^\gamma}{\gamma-1}\, \mathrm{d}x.
\]
\end{itemize}
\end{lemma}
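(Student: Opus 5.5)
The plan is not to redo the existence theory. The existence of finite-energy weak solutions to \eqref{eq:Viscoussystem} for fixed $\epsilon>0$ and $\gamma>\tfrac32$ will be taken from \cite{nevcasova2022motion}. The only point requiring an argument is that the existence time $T$ and the separation bound $\operatorname{dist}(\mathcal{B}_t^\epsilon,\partial\Omega)>\tfrac{3\sigma}{2}$ can be chosen independently of $\epsilon$.

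First, I recall what \cite{nevcasova2022motion} provides for fixed $\epsilon$. One approximates the rigid body by a highly viscous fluid on the whole of $\Omega$ (a penalization of the deformation tensor on the transported body region), adds an artificial pressure $\delta\rho^\beta$ and artificial density diffusion, and solves by Galerkin approximation. One then passes to the limit in the penalization, the diffusion and $\delta$. Strong convergence of the density comes from the effective viscous flux identity and the oscillation defect measure, and this is exactly where $\gamma>\tfrac32$ is used. The Navier slip conditions on $\partial\Omega$ and $\partial\mathcal{B}_t^\epsilon$, with slip coefficient $\epsilon$, enter only through the boundary integrals in \eqref{eq:ViscousMomentumEstimates} and \eqref{eq:EnergyInequalityofVisSys}, which carry a favourable sign. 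The outcome is a weak solution satisfying items 1--6 on a maximal interval $[0,T^\epsilon)$. This interval lasts at least as long as the body stays a positive distance from $\partial\Omega$, since the construction can be continued as long as no collision occurs.

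Second, I obtain an $\epsilon$-uniform bound on the rigid velocity from the energy inequality \eqref{eq:EnergyInequalityofVisSys}. All dissipative terms are nonnegative, so $E^\epsilon(t)\le E^\epsilon(0)$. The initial data are fixed (or uniformly bounded in the stated norms), hence $E^\epsilon(0)\le E_0$ with $E_0$ independent of $\epsilon$. Since $\inf\rho_B^0>0$ and $\rho_B^\epsilon$ is merely transported by a rigid motion, $M$ and the eigenvalues of $\mathcal{J}^\epsilon(t)$ are bounded below by constants depending only on $\rho_B^0$ and $\mathcal{B}_0$. Therefore
\[
M|V^\epsilon(t)|^2+\mathcal{J}^\epsilon(t)\omega^\epsilon(t)\cdot\omega^\epsilon(t)=\int_{\mathcal{B}_t^\epsilon}\rho^\epsilon|u_B^\epsilon|^2\,\mathrm{d}x\le 2E_0,
\]
which gives $\sup_{x\in\mathcal{B}_t^\epsilon}|u_B^\epsilon(t,x)|\le C(E_0,\rho_B^0,\mathcal{B}_0)=:L$, uniformly in $\epsilon$ and $t$. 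Consequently every point of the body moves by at most $Lt$ up to time $t$. Choosing $T:=\sigma/(4L)$, the condition $\operatorname{dist}(\mathcal{B}_0,\partial\Omega)>2\sigma$ yields $\operatorname{dist}(\mathcal{B}_t^\epsilon,\partial\Omega)>2\sigma-\sigma/4>\tfrac{3\sigma}{2}$ for all $t<\min(T,T^\epsilon)$. A continuation argument (no collision before $T$) then gives $T^\epsilon\ge T$.

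The main obstacle I expect is not this estimate but making sure that the result of \cite{nevcasova2022motion}, which is usually stated with a fixed slip coefficient and "up to collision", genuinely applies with the $\epsilon$-dependent viscosity and slip coefficients in \eqref{eq:Viscoussystem}. In particular, its existence time must be controlled only by the distance to $\partial\Omega$ and not by $\epsilon$-dependent constants. I would check this by tracing how their continuation criterion uses the viscosity. The viscosity only provides the $L^2H^1$ regularity needed for compactness at fixed $\epsilon$ and never enters the collision-time bound, which comes purely from the energy-based velocity estimate above. The remaining properties 1--6 are then verbatim those of \cite{nevcasova2022motion}.
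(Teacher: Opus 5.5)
Your proposal is correct and follows the paper's route. The paper gives no separate argument: it imports the result from \cite{nevcasova2022motion}, where the existence time depends only on the data and on $\sigma$. Your bound $M|V^\epsilon|^2+\mathcal{J}^\epsilon\omega^\epsilon\cdot\omega^\epsilon\le 2E^\epsilon(0)$, together with the displacement estimate and the continuation step, makes explicit why that time does not depend on $\epsilon$, which the paper leaves implicit. As you note, this needs the initial data to be fixed, or bounded uniformly in $\epsilon$ including a uniform positive lower bound on $\rho_B^\epsilon|_{t=0}$; that is how the lemma is used in the proof of Theorem~\ref{thm:ExistenceofYMVS}.
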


We improve the bounds on $u_B^\epsilon$ and $\rho_B^\epsilon$ from Lemma~\ref{lem:ExsitenceofViscousSystem}.

\begin{lemma}
It holds that
\begin{equation}\label{eq:BoundsonDesity}
0<\inf_{\mathcal{B}_0} \rho_B^0(x) \ \le \ \rho_B^\epsilon \ \le \ \sup_{\mathcal{B}_0} \rho_B^0(x),
\end{equation}
and $u_B^\epsilon \in L^\infty\big([0,T); \mathcal{R}(\Omega)\big)$.
\end{lemma}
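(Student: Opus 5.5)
For the density bounds, I will use the transport equation $\partial_t\rho_B^\epsilon + u_B^\epsilon\cdot\nabla\rho_B^\epsilon=0$ in $\mathcal{B}_t^\epsilon$. This equation is encoded weakly in the first two lines of \eqref{eq:DefofContinuityEquation} through the test functions $\phi_1,\phi_2$.

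Since $u_B^\epsilon=V^\epsilon+\omega^\epsilon\times(x-X^\epsilon)$ is a rigid velocity field, it is divergence free, smooth in $x$, and integrable in time. I will therefore introduce its flow map $\Phi_t^\epsilon(y)=X^\epsilon(t)+\mathbb{O}^\epsilon_t(y-X_0)$, which is measure preserving and maps $\mathcal{B}_0$ onto $\mathcal{B}_t^\epsilon$. The weak transport equation with a Lipschitz, divergence-free field has unique weak solutions, by DiPerna--Lions or simply by the method of characteristics, since the field is affine in $x$. Uniqueness identifies $\rho_B^\epsilon(t,\Phi_t^\epsilon(y))=\rho_B^0(y)$ for a.e. $y\in\mathcal{B}_0$. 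The bounds \eqref{eq:BoundsonDesity} then follow directly from the corresponding bounds on $\rho_B^0$. Alternatively, I could test the weak formulation with $\phi_1=\beta'(\rho)$-type renormalizations, for instance $(\rho_B^\epsilon-\sup\rho_B^0)_+$. The characteristic representation is cleaner, because $u_B^\epsilon\in L^2(0,T;\mathcal{R}(\Omega))$ already gives absolutely continuous $X^\epsilon$ and $\mathbb{O}^\epsilon_t$.

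For $u_B^\epsilon\in L^\infty([0,T);\mathcal{R}(\Omega))$, I will use the energy inequality \eqref{eq:EnergyInequalityofVisSys}. Because $\rho_B^\epsilon\ge\underline\rho:=\inf\rho_B^0>0$ on $\mathcal{B}_t^\epsilon$, it gives
\[
\frac{\underline\rho}{2}\int_{\mathcal{B}_t^\epsilon}|V^\epsilon+\omega^\epsilon\times(x-X^\epsilon)|^2\,\mathrm{d}x\le E^\epsilon(0)
\]
for a.e. $t$. The remaining step is a coercivity estimate:
\[
\int_{\mathcal{B}_t^\epsilon}|V+\omega\times(x-X)|^2\,\mathrm{d}x\ge c\,(|V|^2+|\omega|^2),
\]
with $c>0$ independent of $t$ and $\epsilon$. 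To get it, I change variables by the rigid motion $\Phi_t^\epsilon$. This reduces the integral to $\int_{\mathcal{B}_0}|\tilde V+\tilde\omega\times(y-\tilde X)|^2\,\mathrm{d}y$ with $|\tilde V|=|V|$ and $|\tilde\omega|=|\omega|$, where $\tilde X$ is the fixed centroid-type point of $\mathcal{B}_0$. The quadratic form $(\tilde V,\tilde\omega)\mapsto\int_{\mathcal{B}_0}|\tilde V+\tilde\omega\times(y-\tilde X)|^2\,\mathrm{d}y$ is positive definite: if it vanishes, the affine field vanishes on an open set, forcing $\tilde V=\tilde\omega=0$. It depends only on $\mathcal{B}_0$, so its smallest eigenvalue gives $c$.

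One subtlety is that $X^\epsilon$ is the $\rho_B^\epsilon$-weighted center of mass. Under the rigid motion it corresponds to a fixed point of $\mathcal{B}_0$, because the density is transported. So the form is indeed time independent. Combining the two estimates gives
\[
|V^\epsilon(t)|+|\omega^\epsilon(t)|\le C\big(E^\epsilon(0),\underline\rho,\mathcal{B}_0\big),
\]
uniformly in $\epsilon$, assuming, as in Theorem~\ref{thm:ExistenceofYMVS}, that the initial energies are uniformly bounded.

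The main obstacle is the first step: justifying the characteristic representation of $\rho_B^\epsilon$ from the weak formulation alone, given only time-integrable $u_B^\epsilon$. I would handle it by the standard duality argument. For smooth $\phi_1$ transported backwards along the flow, the weak formulation gives $\int_{\mathcal{B}_t^\epsilon}\rho_B^\epsilon\,\phi_1(t)=\int_{\mathcal{B}_0}\rho_B^0\,\phi_1(0)$. I would then use the second line with $\phi_2$ to confirm that $\mathcal{B}_t^\epsilon$ is the image of $\mathcal{B}_0$ under the flow.
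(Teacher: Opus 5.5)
Your proposal is correct and follows essentially the paper's route: uniqueness for the linear transport equation with the rigid (affine, divergence-free) field gives the density bounds, and the energy inequality, the density lower bound and a coercivity estimate on $\mathcal{B}_t^\epsilon$ together give $u_B^\epsilon\in L^\infty([0,T);\mathcal{R}(\Omega))$. The differences are minor. The paper extends $\rho_B^\epsilon-\inf_{\mathcal{B}_0}\rho_B^0$ by zero to $\mathbb{R}^3$ and applies the DiPerna--Lions maximum principle, instead of writing out the characteristic representation; it also merely asserts the coercivity constant, which your change of variables back to $\mathcal{B}_0$ actually justifies.
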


\begin{proof}
Consider
\begin{equation}\label{eq:AP1}
\begin{aligned}
&\partial_t \varrho + {\tilde{u}}\cdot \nabla \varrho = 0,\quad (t,x)\in [0,T)\times \Omega,\\
&\varrho\big|_{t=0} = \varrho_0(x) :=
\begin{cases}
\rho_B^0(x) - \inf_{\mathcal{B}_0} \rho_B^0(x), & x \in \mathcal{B}_0,\\
0, & x \in \mathbb{R}^3 \setminus \mathcal{B}_0,
\end{cases}
\end{aligned}
\end{equation}
with $\tilde{u} \in L^2\big([0,T); \mathcal{R}(\Omega)\big)$. Note that $\varrho_0 \in L^\infty(\mathbb{R}^3)$.

By the weak formulation of the $\rho_B^\epsilon$–equation in \eqref{eq:DefofContinuityEquation}, we observe that
\[
\varrho(t,x) =
\begin{cases}
\rho_B^\epsilon(t,x) - \inf_{\mathcal{B}_0} \rho_B^0(x), & (t,x) \in \mathcal{B}^{\epsilon T},\\
0, & (t,x) \in [0,T)\times (\mathbb{R}^3 \setminus \mathcal{B}^{\epsilon T}),
\end{cases}
\]
is a weak solution to \eqref{eq:AP1}.

By the DiPerna–Lions theory \cite[Corollary II.1]{diperna1989ordinary}, \eqref{eq:AP1} admits a unique weak solution in $L^\infty\big([0,T); L^\gamma(\mathbb{R}^n)\big)$ and satisfies
\[
0 \ \le \ \varrho(t,x) \ \le \ \sup_{\mathcal{B}_0} \rho_B^0(x) - \inf_{\mathcal{B}_0} \rho_B^0(x)
\quad \text{for a.e. } (t,x)\in [0,T)\times \mathbb{R}^3.
\]
This implies \eqref{eq:BoundsonDesity}. We now bound $u_B^\epsilon$.

Using $u_B^\epsilon = V^\epsilon(t) + \omega^\epsilon(t)\times (x - X^\epsilon(t))$ and the lower bound on $\rho_B^\epsilon$, the energy inequality \eqref{eq:EnergyInequalityofVisSys} yields, for some $C>0$,
\[
\begin{aligned}
E^\epsilon(0)
&\ge \int_{\mathcal{B}_t^\epsilon} \rho_B^\epsilon |u_B^\epsilon|^2 \,\mathrm{d}x
\ \ge\ \inf_{\mathcal{B}_0} \rho_B^0 \int_{\mathcal{B}_t^\epsilon} |u_B^\epsilon|^2 \,\mathrm{d}x \\
&\ge C\Big( |\alpha^\epsilon(t)|^2 + |\eta^\epsilon(t)|^2 \Big),
\end{aligned}
\]
hence $u_B^\epsilon \in L^\infty\big([0,T); \mathcal{R}(\Omega)\big)$. This concludes the proof.
\end{proof}

Having recalled the viscous theory, we are ready to pass to the limit $\epsilon \to 0$ in \eqref{eq:Viscoussystem} to prove Theorem~\ref{thm:ExistenceofYMVS} (existence of dissipative Young measure-valued solutions).

\begin{proof}[Proof of Theorem \ref{thm:ExistenceofYMVS}]
We divide the proof into 4 parts. 

\smallskip
\textbf{Step 1: The convergence on solid parts.} 
Recalling $u^\epsilon_B\in L^\infty([0,T);\mathcal{R}(\Omega))$, by \cite[Proposition 3.4]{gerard2014existence} and \cite[Theorem II.4]{diperna1989ordinary}, there exist $u_B\in L^\infty([0,T);\mathcal{R}(\Omega))$, $\rho_B\in L^\infty([0,T)\times\Omega)$, a bounded time-dependent domain $\mathcal{B}_t$, and a subsequence of $u^\epsilon_B,\rho^\epsilon_B$ and $\chi_{\mathcal{B}^\epsilon}$ (not relabeled) such that 
\begin{equation}\label{eq:SolidConvergence1}
    u_B^\epsilon \xrightarrow{\text{weakly-$*$ in }L^\infty([0,T);L^2(\Omega))} u_B,
\end{equation}
\begin{equation}\label{eq:convergenceofB}
    \chi_{\mathcal{B}^{\epsilon T}}(t,x) 
    \xrightarrow[\text{weakly-$*$ in }L^\infty([0,T)\times \mathbb{R}^3 )]{\text{strongly in } C([0,T);L^p_{\text{loc}}(\mathbb{R}^3 ))} 
    \chi_{\mathcal{B}^T}(t,x)=:1_{\mathcal{B}_t}(x),
\end{equation}
and 
\begin{equation}\label{eq:SolidConvergence111}
     \rho_B^\epsilon(t,x) 
     \xrightarrow[\text{weakly-$*$ in }L^\infty([0,T)\times \mathbb{R}^3 )]{\text{strongly in } C([0,T);L^p_{\text{loc}}(\mathbb{R}^3 ))} 
     \rho_B(t,x).
\end{equation}
Moreover, for any $\phi_1,\phi_2\in C_c^\infty(\Omega^T;\mathbb{R})$,
\begin{equation}\label{eq:ConvergenceofSolidDensity} 
\begin{aligned}
0&=\int_0^t\!\!\int_{\mathcal{B}_\tau}\partial_\tau\phi_2+u_B\cdot\nabla\phi_2\,\mathrm{d}x\,\mathrm{d}\tau
+\int_{\mathcal{B}_0}\phi_2(0,x)\,\mathrm{d}x
-\int_{\mathcal{B}_t}\phi_2(t,x)\,\mathrm{d}x\\
&\quad+\int_0^t\!\!\int_{\mathcal{B}_\tau}\rho_B\,\partial_\tau\phi_1+\rho_B u_B\cdot\nabla\phi_1\,\mathrm{d}x\,\mathrm{d}\tau
+\int_{\mathcal{B}_0}\rho_B(0,x)\phi_1(0,x)\,\mathrm{d}x
-\int_{\mathcal{B}_t}\rho_B(t,x)\phi_1(t,x)\,\mathrm{d}x.
\end{aligned}
\end{equation}

We recall \cite[Lemma 5.4]{gerard2014existence} together with the rigid motion defined in \eqref{eq:body-domain}. As a consequence, we establish the convergence properties of the moving boundaries $\partial\mathcal{B}_t^\epsilon \to \partial\mathcal{B}_t$ in the following lemma.

\begin{lemma}\label{lem:ConvergeBou1}
It holds that
\begin{equation}\label{eq:ConvergeBou}
  \lim_{\epsilon \to 0} 
  \sup_{t \in [0,T]} 
  \mathcal{L}^3\!\Big(\mathcal{B}_t^\epsilon \Delta \mathcal{B}_t\Big) = 0,
\end{equation}
where $\Delta$ denotes symmetric difference. Let
\begin{equation}
    G:=\{Q\in SO(3):\, Q\mathcal{B}_0=\mathcal{B}_0\}
\end{equation}
be the rotational symmetry group of $\mathcal{B}_0$. Then for any $\delta \in (0,1)$ there exists $\epsilon_\delta > 0$ such that, for all $\epsilon \in (0,\epsilon_\delta]$, the following holds:
\begin{equation}\label{eq:ConvergeBoundary}
\begin{aligned}
&\sup_{t\in[0,T]}\ \inf_{Q\in G}\ \sup_{x\in\partial\mathcal{B}_0}
\big|y^\varepsilon(t,x)-y(t,Qx)\big|
\le \mathcal{O}(1)\,\delta^5,\\[2pt]
&\sup_{t\in[0,T]}\ \inf_{Q\in G}\ \sup_{x\in\partial\mathcal{B}_0}
\big|n(y^\varepsilon(t,x))-n(y(t,Qx))\big|
\le \mathcal{O}(1)\,\delta^5,
\end{aligned}
\end{equation}
where 
\[
y^\epsilon(t,x) = X^\epsilon(t) + \mathbb{O}_t^\epsilon x, 
\qquad 
y(t,x) = X(t) + \mathbb{O}_t x, 
\quad x \in \partial\mathcal{B}_0,
\]
and $n(y^\epsilon(t,x))$ and $n(y(t,x))$ denote the outward unit normal vectors to $\partial\mathcal{B}_t^\epsilon$ and $\partial\mathcal{B}_t$, respectively, at the corresponding points.
\end{lemma}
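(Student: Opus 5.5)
The plan is to convert the $L^p$-convergence of characteristic functions into convergence of the rigid motions themselves, and then use the smoothness of $\partial\mathcal{B}_0$.

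\textbf{Step 1: symmetric difference.} The first claim \eqref{eq:ConvergeBou} follows directly from \eqref{eq:convergenceofB}. Since $\chi_{\mathcal{B}^{\epsilon T}}\to\chi_{\mathcal{B}^T}$ strongly in $C([0,T);L^1_{\rm loc}(\mathbb{R}^3))$, we have $\mathcal{L}^3(\mathcal{B}_t^\epsilon\Delta\mathcal{B}_t)=\|\chi_{\mathcal{B}_t^\epsilon}-\chi_{\mathcal{B}_t}\|_{L^1}$. All bodies lie in the fixed bounded set $\Omega$, so this tends to zero uniformly in $t$. This is exactly \cite[Lemma 5.4]{gerard2014existence}.

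\textbf{Step 2: convergence of the rigid motions modulo symmetries.} Both $\mathcal{B}_t^\epsilon$ and $\mathcal{B}_t$ are rigid images of $\mathcal{B}_0$, by \eqref{eq:body-domain} (for $\mathcal{B}_t$ this is inherited in the limit, since $X^\epsilon\to X$ and $\mathbb{O}^\epsilon_t\to\mathbb{O}_t$ along a subsequence by Arzel\`a--Ascoli; $V^\epsilon,\omega^\epsilon$ are bounded in $L^\infty$). Consider the map
\[
(a,Q)\in\mathbb{R}^3\times SO(3)\mapsto \mathcal{L}^3\big((a+Q\mathcal{B}_0)\Delta \mathcal{B}_0\big).
\]
It is continuous, and it vanishes exactly when $a+Q\mathcal{B}_0=\mathcal{B}_0$ up to null sets. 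For a regular (open, smooth) domain this forces set equality. Equality of centers of mass then gives $a=0$, hence $Q\in G$. Writing $\mathcal{B}_t^\epsilon\Delta\mathcal{B}_t$ as a rigid image of $(a^\epsilon+Q^\epsilon\mathcal{B}_0)\Delta\mathcal{B}_0$, with $Q^\epsilon=\mathbb{O}_t^{\top}\mathbb{O}^\epsilon_t$ and $a^\epsilon=\mathbb{O}_t^\top(X^\epsilon-X)$, we reduce to a compactness argument. Since $a^\epsilon$ is bounded and $SO(3)$ is compact, if $\inf_{Q\in G}(|a^\epsilon|+|Q^\epsilon-Q|)\ge\eta>0$ along some sequence $t_\epsilon$, we extract a limit $(a^*,Q^*)$ with zero symmetric-difference measure but at distance $\ge\eta$ from $\{0\}\times G$, a contradiction. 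Therefore
\[
\sup_t\inf_{Q\in G}\big(|X^\epsilon(t)-X(t)|+|\mathbb{O}^\epsilon_t-\mathbb{O}_tQ|\big)\to 0 .
\]
Actually $X^\epsilon\to X$ follows directly from the convergence of the center-of-mass integrals.

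\textbf{Step 3: boundary points and normals.} For $x\in\partial\mathcal{B}_0$ and $Q\in G$,
\[
|y^\epsilon(t,x)-y(t,Qx)|\le|X^\epsilon-X|+|\mathbb{O}^\epsilon_t-\mathbb{O}_tQ|\,\sup_{\partial\mathcal{B}_0}|x|.
\]
For the normals, rigid motions transport normals: $n(y^\epsilon(t,x))=\mathbb{O}^\epsilon_t n_0(x)$ and $n(y(t,Qx))=\mathbb{O}_t n_0(Qx)=\mathbb{O}_tQn_0(x)$. The last identity holds because $Q$ preserves $\mathcal{B}_0$, hence its boundary and its outward normal field. This gives
\[
|n(y^\epsilon)-n(y)|\le|\mathbb{O}^\epsilon_t-\mathbb{O}_tQ|.
\]
Given $\delta$, we choose $\epsilon_\delta$ so that the quantity in Step 2 is below $\delta^5$, which yields \eqref{eq:ConvergeBoundary}.

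\textbf{Main obstacle.} The hard step is Step 2: the passage from measure convergence to convergence of $(X,\mathbb{O})$ modulo $G$, uniformly in $t$. This needs the contradiction argument and care with the symmetry group. It is also why the statement is phrased with $\inf_{Q\in G}$: when $\mathcal{B}_0$ has symmetries, the rotations themselves need not converge.
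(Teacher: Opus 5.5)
Your argument is correct and is essentially the paper's: pull back by $a_t^\epsilon=\mathbb{O}_t^\top(X^\epsilon(t)-X(t))$ and $S_t^\epsilon=\mathbb{O}_t^\top\mathbb{O}_t^\epsilon$, use continuity of $(a,S)\mapsto\mathcal{L}^3(\mathcal{B}_0\Delta(a+S\mathcal{B}_0))$ and compactness in a contradiction argument uniform in $t$, then transport points and normals rigidly using $n_0(Qx)=Qn_0(x)$ for $Q\in G$. Your remark that $X^\epsilon\to X$ follows from the center-of-mass integrals is the sound way to force $a=0$: for a mass-inhomogeneous body the geometric centroid of $\mathcal{B}_0$ need not be the origin, so neither boundedness (the paper's reason) nor equality of geometric centroids (your first reason) excludes $a\neq0$. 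Your Arzel\`a--Ascoli observation $\mathbb{O}_t^\epsilon\to\mathbb{O}_t$ uniformly, once the limit is identified through $\dot{\mathbb{O}}_t\mathbb{O}_t^\top=[\omega(t)]_\times$, would even give the boundary estimates with $Q=\mathbb{I}$, bypassing $G$ altogether.
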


The convergence in \eqref{eq:ConvergeBou} follows directly from \cite[Lemma~5.4]{gerard2014existence}.
The convergence of the moving boundaries
and of the associated outward unit normals, up to rigid symmetries, relies on
a compactness argument on the symmetry group $G$ of the rigid body. The detailed proof is given in Appendix~\ref{app:proof-lem-ConvergeBou1}.

\smallskip
\textbf{Step 2: The approximation of test functions.} 
Because the solid position depends on $\epsilon$, the set $\mathcal{V}_{\mathcal{B}^{\epsilon T}}$ varies with $\epsilon$. To pass to the limit, we construct a sequence $\{\phi^\epsilon\}\subset \mathcal{V}_{\mathcal{B}^{\epsilon T}}$ approximating any fixed $\phi\in \tilde{\mathcal{V}}_{\mathcal{B}^T}$.  

Following \cite[Section 5.2, p.~2055]{gerard2014existence}, introduce orthogonal curvilinear coordinates $(s_1,s_2,z)$ in a tubular neighborhood of $\partial \mathcal{B}^\epsilon_t$: $(s_1,s_2)$ parametrize $\partial \mathcal{B}^\epsilon_t$, and $z$ is the transverse signed distance, with $z>0$ outside $\mathcal{B}_t$. In particular, $\partial \mathcal{B}^\epsilon_t=\{z=0\}$. Let
\[
e_1=\tfrac{1}{h_1}\partial_{s_1},\qquad e_2=\tfrac{1}{h_2}\partial_{s_2},\qquad e_z=\tfrac{1}{h_z}\partial_{z},
\]
be the associated orthonormal frame, with scaling factors $h_1,h_2,h_z>0$.

Fix $\delta>0$ and take $\epsilon=\epsilon_\delta$ from Lemma \ref{lem:ConvergeBou1}. Define 
\begin{equation*}
\phi^\epsilon=\begin{cases}
\phi^F, & z>\delta,\\[2pt]
\langle\phi^F,e_1\rangle e_1+\langle\phi^F,e_2\rangle e_2+\tilde{\phi}_3\,e_z, & z\in(0,\delta),\\[2pt]
\phi^B, & z<0,
\end{cases}
\end{equation*}
where 
\[
\tilde{\phi}_3(s_1,s_2,z)=\eta\!\Big(\frac{3}{2\delta}z\Big)\langle\phi^B,e_z\rangle+\Big(1-\eta\!\Big(\frac{3}{2\delta}z\Big)\Big)\langle\phi^F,e_z\rangle,
\]
and $\eta\in C^\infty(\mathbb{R})$ satisfies
\[
\eta(\xi)=\begin{cases}
1, & |\xi|\le 1/2,\\
0, & |\xi|\ge 1.
\end{cases}
\]
Since $\phi \in \tilde{\mathcal{V}}_{\mathcal{B}^T}$, we have
$\phi^\epsilon \in \mathcal{V}_{\mathcal{B}^{\epsilon T}}$. Moreover, a direct computation shows that for $i=1,2$,
\begin{equation}\label{eq:JK6}
    \partial_{s_i}(\tilde{\phi}_3 e_z)
=\eta\!\Big(\tfrac{3}{2\delta}z\Big)\partial_{s_i}\!\big(\langle\phi^B,e_z\rangle e_z\big)
+\Big(1-\eta\!\Big(\tfrac{3}{2\delta}z\Big)\Big)\partial_{s_i}\!\big(\langle\phi^F,e_z\rangle e_z\big),
\end{equation}
and
\begin{equation}\label{eq:JK5}
    \begin{aligned}
        \partial_{z}(\tilde{\phi}_3 e_z)
&=\eta\!\Big(\tfrac{3}{2\delta}z\Big)\partial_{z}\!\big(\langle\phi^B,e_z\rangle e_z\big)
+\Big(1-\eta\!\Big(\tfrac{3}{2\delta}z\Big)\Big)\partial_{z}\!\big(\langle\phi^F,e_z\rangle e_z\big)
\\&+\frac{3}{2\delta}\,\eta'\!\Big(\tfrac{3}{2\delta}z\Big)\Big(\langle\phi^B,e_z\rangle-\langle\phi^F,e_z\rangle\Big)e_z.
    \end{aligned}
\end{equation}

For any point $A(s_1,s_2,z)$ with $z\in(0,\delta)$, define $\mathcal{A}^\epsilon_A\in\partial\mathcal{B}^\epsilon_t$ by
\begin{equation}\label{eq:Dec121}
    z=\mathrm{dist}(A,\partial\mathcal{B}^\epsilon_t)=\big|A\mathcal{A}^\epsilon_A\big|.
\end{equation}
By Lemma \ref{lem:ConvergeBou1}, there exists $\mathcal{A}_A\in \partial\mathcal{B}_t$ such that  
\begin{equation}\label{eq:Dec122}
    \big|\mathcal{A}_A^\epsilon\mathcal{A}_A\big|\leq\mathcal{O}(1)\delta^5, \qquad \big|n(\mathcal{A}_A^\epsilon)-n(\mathcal{A}_A)\big|\leq\mathcal{O}(1)\delta^5,
\end{equation}
where $n(\mathcal{A}_A^\epsilon)$ and $n(\mathcal{A}_A)$ denote the outward unit normal vectors to $\partial\mathcal{B}_t^\epsilon$ and $\partial\mathcal{B}_t$, respectively, at the corresponding points. Furthermore, combining \eqref{eq:Dec121} and \eqref{eq:Dec122}, we obtain that 
\begin{equation*}
    \big|A\mathcal{A}_A^\epsilon\big|\leq \mathcal{O}(1)\delta, \qquad\big|A\mathcal{A}_A\big|\leq \big|A\mathcal{A}_A^\epsilon\big|+\big|\mathcal{A}_A\mathcal{A}_A^\epsilon\big|\leq \mathcal{O}(1)\delta.
\end{equation*}

Then, noting the definition of $\tilde{\mathcal{V}}_{\mathcal{B}^T}$, direct computation shows that 
\begin{equation}\label{eq:JK1}
\begin{aligned}
&\big|\frac{3}{2\delta}\eta'\!\Big(\tfrac{3}{2\delta}z\Big)\Big(\langle\phi^B,e_z\rangle -\langle\phi^F,e_z\rangle\Big)e_z\big|\\
&=\Big|\frac{3}{2\delta}\eta'\!\Big(\tfrac{3}{2\delta}z\Big)\Big(\langle\phi^B(A),n(\mathcal{A}^\epsilon_A)\rangle-\langle\phi^B(A),n(\mathcal{A}_A)\rangle+\langle\phi^B(A),n(\mathcal{A}_A)\rangle-\langle\phi^B(\mathcal{A}_A),n(\mathcal{A}_A)\rangle\\
&+\langle\phi^F(\mathcal{A}_A),n(\mathcal{A}_A)\rangle-
\langle\phi^F(A),n(\mathcal{A}_A)\rangle
+\langle\phi^F(A),n(\mathcal{A}_A)\rangle-\langle\phi^F(A),n(\mathcal{A}^\epsilon_A)\rangle\Big)\Big|\\
&\leq\mathcal{O}(1)\delta^4+\mathcal{O}(1)\frac{1}{\delta} \qnt{\Big|\langle\phi^B(A)-\phi^B(\mathcal{A}_A),n(\mathcal{A}_A)\rangle-\langle\phi^F(A)-\phi^F(\mathcal{A}_A),n(\mathcal{A}_A)\rangle\Big|  }          
\end{aligned}
\end{equation}

When $\mathrm{dist}(A,\partial\mathcal{B}_t^\epsilon)\leq \delta^2$, we have
\[
|A\mathcal{A}_A| 
  \leq |A\mathcal{A}_A^\epsilon| + |\mathcal{A}_A^\epsilon \mathcal{A}_A|
  \leq \mathcal{O}(1)\,\delta^2.
\]
Since $\phi^B$ and $\phi^F$ possess high regularity, it follows from the Lagrange mean value theorem and \eqref{eq:JK1} that
\begin{equation}\label{eq:JK3}
\bigg|
\frac{3}{2\delta}\,\eta'\!\Big(\tfrac{3}{2\delta}z\Big)
\Big(\langle \phi^B, e_z\rangle - \langle \phi^F, e_z\rangle\Big)e_z
\bigg|
\leq \mathcal{O}(1)\,\delta,
\qquad z \in [0,\delta^2].
\end{equation}

When $\operatorname{dist}(A,\partial\mathcal{B}_t^\epsilon)\in (\delta^2,\delta)$, we need a delicate estimate of
\[
\Big|\langle\phi^B(A)-\phi^B(\mathcal{A}_A),n(\mathcal{A}_A)\rangle
      -\langle\phi^F(A)-\phi^F(\mathcal{A}_A),n(\mathcal{A}_A)\rangle\Big|
\]
appearing in \eqref{eq:JK1}. First, we note that
\begin{equation}\label{eq:normal-approx}
\Big|\tfrac{\mathcal{A}_A A}{|\mathcal{A}_A A|}-n(\mathcal{A}_A)\Big|
\;\le\; \mathcal{O}(1)\,\delta^3 .
\end{equation}
Denote $l_{\mathcal{A}_A A}:=\tfrac{\mathcal{A}_A A}{|\mathcal{A}_A A|}$. Since the normal components of $\phi^B$ and $\phi^F$ have $C^1$ contact across $\partial\mathcal{B}_t$, i.e.,
\[
(\phi^B-\phi^F)\cdot n=0,\qquad
n\cdot\nabla(\phi^B-\phi^F)n=0\quad\text{on } \partial\mathcal{B}_t,
\]
the Lagrange mean value theorem  yields
\begin{equation}\label{eq:JK2}
    \begin{aligned}
       &\Big|\langle\phi^B(A)-\phi^B(\mathcal{A}_A),n(\mathcal{A}_A)\rangle-\langle\phi^F(A)-\phi^F(\mathcal{A}_A),n(\mathcal{A}_A)\rangle\Big|\\
       &=\Big|\langle\partial_{l_{\mathcal{A}_AA}}\phi^B(A_1),n(\mathcal{A}_A)\rangle-\langle\partial_{l_{\mathcal{A}_AA}}\phi^F(A_2),n(\mathcal{A}_A)\rangle\Big|\Big|A\mathcal{A}_A\Big|\\
       &\leq \Big|\langle\partial_{n(\mathcal{A}_A)}\phi^B(A_1),n(\mathcal{A}_A)\rangle-\langle\partial_{n(\mathcal{A}_A)}\phi^F(A_2),n(\mathcal{A}_A)\rangle\Big|\Big|A\mathcal{A}_A\Big|+\mathcal{O}(1)\delta^3\\
       &\leq \Big|\langle\partial_{n(\mathcal{A}_A)}\phi^B(A_1),n(\mathcal{A}_A)\rangle-\langle\partial_{n(\mathcal{A}_A)}\phi^B(\mathcal{A}_A),n(\mathcal{A}_A)\rangle\\&~~~~+\langle\partial_{n(\mathcal{A}_A)}\phi^F(\mathcal{A}_A),n(\mathcal{A}_A)\rangle-\langle\partial_{n(\mathcal{A}_A)}\phi^F(A_2),n(\mathcal{A}_A)\rangle\Big|\Big|A\mathcal{A}_A\Big|+\mathcal{O}(1)\delta^3\\
       &\leq \Big|\langle\partial_{l_{\mathcal{A}_AA}n(\mathcal{A}_A)}\phi^B(A_1'),n(\mathcal{A}_A)\rangle\Big|\Big|A_1\mathcal{A}_A\Big|\Big|A\mathcal{A}_A\Big|\\&~~~~+\Big|\langle\partial_{l_{\mathcal{A}_AA}n(\mathcal{A}_A)}\phi^F(A_2'),n(\mathcal{A}_A)\rangle\Big|\Big|A_2\mathcal{A}_A\Big|\Big|A\mathcal{A}_A\Big|+\mathcal{O}(1)\delta^3\\&\leq\mathcal{O}(1)\delta^2,
    \end{aligned}
\end{equation}
where $A_i, A'_i, i=1,2$ are points on the segment $\mathcal{A}_AA.$ Then, plugging \eqref{eq:JK2} into \eqref{eq:JK1}, we have that 
\begin{equation}\label{eq:JK4}
    \begin{aligned}
        \big|\frac{3}{2\delta}\eta'\!\Big(\tfrac{3}{2\delta}z\Big)\Big(\langle\phi^B,e_z\rangle -\langle\phi^F,e_z\rangle\Big)e_z\big|\leq \mathcal{O}(1)\,\delta, \qquad z\in(\delta^2,\delta]. 
    \end{aligned}
\end{equation}

Finally, plugging \eqref{eq:JK3} and \eqref{eq:JK4} into \eqref{eq:JK5}, and recalling \eqref{eq:JK6}, we conclude the property of the approximation of the test function in the following lemma:  
\begin{lemma}\label{lem:JJ1}
 The approximated test function satisfies  
   \begin{equation}
       \begin{aligned}
           \phi^{\epsilon}&=\phi^F\chi_{\{z\ge0\}}+\phi^B\chi_{\{z<0\}}+\eta\!\Big(\frac{3}{2\delta}z\Big)\langle\phi^B-\phi^F,e_z\rangle e_z \chi_{\{0\le z\le\delta\}},\\&=\phi^F\chi_{\{z\ge0\}}+\phi^B\chi_{\{z<0\}}+\mathcal{O}(\delta),
       \end{aligned}
   \end{equation}
   and 
\begin{equation}
    \begin{aligned}
    \nabla\phi^\epsilon &=\nabla\phi^F\chi_{\{z\ge0\}}+\nabla\phi^B\chi_{\{z<0\}}+\eta\!\Big(\frac{3}{2\delta}z\Big)\nabla\qnt{\langle\phi^B-\phi^F,e_z\rangle e_z} \chi_{\{0\le z\le\delta\}}+\mathcal{O}(1)\delta\\&=\nabla\phi^F\chi_{\{z\ge0\}}+\nabla\phi^B\chi_{\{z<0\}}+\mathcal{O}(\delta).    
    \end{aligned}
\end{equation}
\end{lemma}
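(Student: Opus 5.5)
The plan is to read off both identities of Lemma~\ref{lem:JJ1} from the definition of $\phi^\epsilon$ in the tubular coordinates, reusing the pointwise bounds \eqref{eq:JK3} and \eqref{eq:JK4} already established.

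\textbf{The identity for $\phi^\epsilon$.} In the layer $0<z<\delta$ I expand $\phi^F$ in the orthonormal frame, $\phi^F=\langle\phi^F,e_1\rangle e_1+\langle\phi^F,e_2\rangle e_2+\langle\phi^F,e_z\rangle e_z$. Subtracting this from the definition gives
\[
\phi^\epsilon-\phi^F=(\tilde\phi_3-\langle\phi^F,e_z\rangle)e_z=\eta\!\Big(\tfrac{3}{2\delta}z\Big)\langle\phi^B-\phi^F,e_z\rangle e_z .
\]
For $z>\delta$ the cutoff $\eta(\tfrac{3}{2\delta}z)$ vanishes, and for $z<0$ we have $\phi^\epsilon=\phi^B$. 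This yields the first line. For the $\mathcal{O}(\delta)$ claim I bound $|\langle\phi^B-\phi^F,e_z\rangle|$ on $\{0\le z\le 2\delta/3\}$, the support of the cutoff. By the decomposition in \eqref{eq:JK1}, multiplied back by the factor $\tfrac{2\delta}{3}$ instead of $\tfrac{3}{2\delta}$, this quantity is at most $\mathcal{O}(\delta^5)+\mathcal{O}(1)|A\mathcal{A}_A|$. Here the normal mismatch is $\mathcal{O}(\delta^5)$ by \eqref{eq:Dec122}, and the difference of values along the segment $\mathcal{A}_AA$ is controlled by the mean value theorem together with $(\phi^B-\phi^F)\cdot n=0$ at $\mathcal{A}_A$. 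Since $|A\mathcal{A}_A|\le\mathcal{O}(1)\delta$, the remainder is $\mathcal{O}(\delta)$. In fact \eqref{eq:JK2} gives the sharper bound $\mathcal{O}(\delta^2)$, which will be needed below.

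\textbf{The gradient.} I write $\nabla$ in curvilinear coordinates as $\sum_i \tfrac{1}{h_i}e_i\otimes\partial_{s_i}+\tfrac1{h_z}e_z\otimes\partial_z$. The tangential derivatives are computed via \eqref{eq:JK6}; no derivative of the cutoff appears there, and they combine to $\eta\,\partial_{s_i}(\langle\phi^B-\phi^F,e_z\rangle e_z)$ plus $\partial_{s_i}\phi^F$. The normal derivative via \eqref{eq:JK5} produces the analogous term plus the single bad term $\tfrac{3}{2\delta}\eta'(\cdot)\langle\phi^B-\phi^F,e_z\rangle e_z$. By \eqref{eq:JK3} on $z\in[0,\delta^2]$ and \eqref{eq:JK4} on $z\in(\delta^2,\delta]$, this term is $\mathcal{O}(1)\delta$. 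The scale factors $h_i,h_z$ are bounded above and below uniformly in $\epsilon$ on a fixed tubular neighborhood, since $\partial\mathcal{B}^\epsilon_t$ is a rigid image of the smooth $\partial\mathcal{B}_0$. This gives the first line of the gradient formula.

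\textbf{Remaining term and main obstacle.} For the second equality I must show $\eta\,\nabla(\langle\phi^B-\phi^F,e_z\rangle e_z)=\mathcal{O}(\delta)$ on the layer. The tangential part is harmless: the function $\langle\phi^B-\phi^F,e_z\rangle$ is $\mathcal{O}(\delta^2)$ on a layer of width $\delta$, so its tangential derivatives are small there. I would obtain this either by applying the $C^1$-contact argument of \eqref{eq:JK2} to tangential derivatives, or directly by noting that $\phi^B-\phi^F$ has vanishing normal trace on the limit boundary, so its tangential derivatives along $\partial\mathcal{B}_t$ vanish in the normal component up to curvature terms of size $\mathcal{O}(|\phi^B-\phi^F|)$. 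The normal part is the main obstacle. Here I would use the second matching condition $n\cdot\nabla(\phi^B-\phi^F)n=0$ on $\partial\mathcal{B}_t$, transferred to $\partial\mathcal{B}^\epsilon_t$ with an $\mathcal{O}(\delta^5)$ error by Lemma~\ref{lem:ConvergeBou1}. Then a Taylor expansion along the segment $\mathcal{A}_AA$ of length $\mathcal{O}(\delta)$, using bounded second derivatives of the smooth $\phi^B,\phi^F$, gives $\partial_z\langle\phi^B-\phi^F,e_z\rangle=\mathcal{O}(\delta)$, in the same way as in \eqref{eq:JK2}. The derivatives of $e_z$ are bounded, and they multiply the already $\mathcal{O}(\delta^2)$ factor. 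Collecting these bounds yields $\nabla\phi^\epsilon=\nabla\phi^F\chi_{\{z\ge0\}}+\nabla\phi^B\chi_{\{z<0\}}+\mathcal{O}(\delta)$.
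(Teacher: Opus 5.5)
Your proposal is correct and follows the paper's own route. You read the first identity off the definition of $\phi^\epsilon$ in the frame $(e_1,e_2,e_z)$, and you control the cutoff term $\frac{3}{2\delta}\eta'(\cdot)\langle\phi^B-\phi^F,e_z\rangle e_z$ in \eqref{eq:JK5} by \eqref{eq:JK3}--\eqref{eq:JK4}. The final $\mathcal{O}(\delta)$ equalities rest on the $C^1$ contact of the normal components. The paper only asserts this in the remark after the lemma, and you justify it in more detail.

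One step in the tangential part needs tightening. With $w=\phi^B-\phi^F$, the curvature term $w\cdot\partial_\tau n$ is of order one, not small, and it cancels exactly in $\partial_\tau\langle w,e_z\rangle$. The cleanest finish is to make your first remark explicit: $\langle w,e_z\rangle=\mathcal{O}(\delta^2)$ on the layer $\{0\le z\le\delta\}$, together with uniformly bounded second derivatives and a difference-quotient interpolation with step of order $\delta$, already gives $\nabla\langle w,e_z\rangle=\mathcal{O}(\delta)$.
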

\begin{remark}
The last equalities in both formulas hold due to the $C^1$ regularity of the normal component of $\phi\in\tilde{\mathcal{V}}_{\mathcal{B}^T}$ at $\partial\mathcal{B}_t$. 
\end{remark}

\smallskip
\textbf{Step 3: The construction of $(\nu,\mathcal{D},\mathcal{B}_t)$.} %Set 
%\[
%\mathcal{F}^{\epsilon,\delta}_t=\{x\in\mathcal{F}_t^\epsilon:\mathrm{dist}(x,\partial\mathcal{F}_t^\epsilon)>\delta\}, \qquad 
%d\mathcal{F}^{\epsilon,\delta}_t=\mathcal{F}^\epsilon_t\setminus\mathcal{F}^{\epsilon,\delta}_t.
%\]
From \eqref{eq:EnergyInequalityofVisSys},
\[
\chi_{\mathcal{F}^{\epsilon}_t}\rho^\epsilon\in L^\infty(0,T;L^\gamma(\Omega)),\qquad 
\chi_{\mathcal{F}^{\epsilon}_t}\sqrt{\rho^\epsilon}\,|u^\epsilon|\in L^\infty(0,T;L^2(\Omega)).
\]
By Lemma \ref{lemma:YoungMeasure1}, there exist 
\[
\mathcal{Y}\in L^{\infty}_w\big((0,T)\times\Omega;\mathcal{P}(\mathbb{R}^{1+3})\big),\qquad 
m_t^{(\cdot)}: C_{\gamma,2}\to \mathcal{M}(\overline{\Omega})
\]
such that for any $f\in C_{\gamma,2}$,
\begin{equation}\label{eq:limitofApp}
f(\chi_{\mathcal{F}^{\epsilon}_t}\rho^\epsilon,\chi_{\mathcal{F}^{\epsilon}_t}\sqrt{\rho^\epsilon}u^\epsilon)\,\mathcal{L}
\overset{*}{\rightharpoonup}\int_{\mathbb{R}^{1+3}} f(\lambda_1,\lambda')\,\mathrm{d}\mathcal{Y}\,\mathcal{L}
+ m^f_t\otimes \mathrm{d}t 
\end{equation}
as $\delta$ and $\epsilon=\epsilon_\delta$ tend to zero.
Moreover, if $f(0,0)=0$, the limiting measure in \eqref{eq:limitofApp} is supported in $\overline{\mathcal{F}^T}$.

Let $\phi\in\tilde{\mathcal{V}}_{\mathcal{B}^T}$ and $\phi^\epsilon$ be the associated approximation. Then, for $f\in C_{\gamma,2}$ with $f(0,0)=0$, 
%\begin{equation}\label{eq:GeneralConvergence}
%\begin{aligned}
%\int_0^T\!\!\int_{\mathcal{F}^\epsilon_\tau} f(\rho^\epsilon,\sqrt{\rho^\epsilon}u^\epsilon)\,\nabla\phi^\epsilon\,\mathrm{d}x\,\mathrm{d}\tau
%&=\int_0^T\!\!\int_{\Omega} f(\chi_{\mathcal{F}^{\epsilon,\delta}_\tau}\rho^\epsilon,\chi_{\mathcal{F}^{\epsilon,\delta}_\tau}\sqrt{\rho^\epsilon}u^\epsilon)\,\nabla\phi^\epsilon\,\mathrm{d}x\,\mathrm{d}\tau\\
%&\quad+ \int_0^T\!\!\int_{\Omega} f(\chi_{d\mathcal{F}^{\epsilon,\delta}_\tau}\rho^\epsilon,\chi_{d\mathcal{F}^{\epsilon,\delta}_\tau}\sqrt{\rho^\epsilon}u^\epsilon)\,\nabla\phi^\epsilon\,\mathrm{d}x\,\mathrm{d}\tau.
%\end{aligned}
%\end{equation} 
according to Lemma \ref{lem:ConvergeBou1}, \eqref{eq:limitofApp}, and the properties of $\phi^\epsilon$ in Lemma \ref{lem:JJ1}, we obtain
\begin{equation}\label{eq:momentumConvergence1}
\begin{gathered}
\int_0^T\int_{\Omega} f(\chi_{\mathcal{F}^{\epsilon}_\tau}\rho^\epsilon,\chi_{\mathcal{F}^{\epsilon}_\tau}\sqrt{\rho^\epsilon}u^\epsilon)\,\nabla\phi^\epsilon\,\mathrm{d}x\,\mathrm{d}\tau \\[6pt]
\downarrow\ (\epsilon\to0) \\[6pt]
\int_0^T\int_{\mathcal{F}_\tau}\!\langle f(\lambda_1,\lambda'),\mathcal{Y}\rangle\,\nabla\phi^F\,\mathrm{d}x\,\mathrm{d}\tau
+\int_0^T\int_{\overline{\mathcal{F}_\tau}}\!\nabla\phi^F\,\mathrm{d}m^f_\tau\,\mathrm{d}\tau.
\end{gathered}
\end{equation}
%and 
%\[
%\begin{gathered}
%\int_0^T\int_{\Omega} f(\chi_{d\mathcal{F}^{\epsilon,\delta}_\tau}\rho^\epsilon,\chi_{d\mathcal{F}^{\epsilon,\delta}_\tau}\sqrt{\rho^\epsilon}u^\epsilon)\,\nabla\phi^\epsilon\,\mathrm{d}x\,\mathrm{d}\tau \\[6pt]
%\downarrow\ (\epsilon\to0) \\[6pt]
%\int_0^T\int_{\Omega}\!\nabla\phi^F\,\mathrm{d}\tilde{m}^f_\tau\,\mathrm{d}\tau,
%\end{gathered}
%\]
%for some $\tilde{m}^f_t$ supported on $\partial\mathcal{F}_t$. 
%Therefore,
%\begin{equation}\label{eq:momentumConvergence1}
%\begin{gathered}
%\int_0^T\int_{\mathcal{F}^\epsilon_\tau} f(\rho^\epsilon,\sqrt{\rho^\epsilon}u^\epsilon)\,\nabla\phi^\epsilon\,\mathrm{d}x\,\mathrm{d}\tau \\[6pt]
%\downarrow\ (\epsilon\to0) \\[6pt]
%\int_0^T\int_{\mathcal{F}_\tau}\!\langle f(\lambda_1,\lambda'),\mathcal{Y}\rangle\,\nabla\phi^F\,\mathrm{d}x\,\mathrm{d}\tau
%+\int_0^T\int_{\overline{\mathcal{F}_\tau}}\!\nabla\phi^F\,\mathrm{d}\nu^f_\tau\,\mathrm{d}\tau,
%\end{gathered}
%\end{equation}
%where 
%\begin{equation}
 %   \nu^f_t:=m^f_t+\tilde{m}^f_t.
%\end{equation}

We now set
\[
    \nu=\begin{cases}
        \mathcal{Y},& (t,x)\in\mathcal{F}^T,\\
        \delta_{(\rho_B,\sqrt{\rho_B}u_B)},& (t,x)\in \mathcal{B}^T,
    \end{cases}
\]
and 
\begin{equation}\label{eq:Defmeasuers}
   \nu^M= m^{\lambda'\otimes\lambda'+\lambda_1^\gamma}_t,\qquad 
  \mathcal{D}(t)=m^{\frac{|\lambda'|^2}{2}+\frac{\lambda_1^\gamma}{\gamma-1}}_t(\Omega),
\end{equation}
with $\mathcal{B}^T$ and $(\rho_B, u_B)$ as in Step~1.

\smallskip
\textbf{Step 4: Verification of constructed $\nu, \mathcal{D}, \mathcal{B}^T$ as a solution.} 
From 
\[
\sup_{\epsilon>0}\|\chi_{\mathcal{F}^\epsilon_t}\rho^\epsilon\|_{L^\infty(0,T;L^{\gamma}(\Omega))}
+\sup_{\epsilon>0}\|\chi_{\mathcal{F}^\epsilon_t}\sqrt{\rho^\epsilon}u^\epsilon\|_{L^\infty(0,T;L^{2}(\Omega))}<+\infty,
\]
we obtain 
\[
\sup_{\epsilon>0}\|\chi_{\mathcal{F}^\epsilon_t}\rho^\epsilon u^\epsilon\|_{L^\infty(0,T;L^{\frac{2\gamma}{\gamma+1}}(\Omega))}<+\infty.
\]
Using these bounds together with Lemma \ref{lem:ConvergeBou1}, \eqref{eq:ConvergeBou}, \eqref{eq:SolidConvergence1}, \eqref{eq:SolidConvergence111}, and passing to the limit in \eqref{eq:DefofContinuityEquation}, we conclude that $\nu,\mathcal{D},\mathcal{B}^T$ satisfy items (1)–(3) of Definition~\ref{def:definitionofYoungmeasure}. 

For the momentum balance, we estimate
\[
\begin{aligned}
&\Big|\epsilon\!\int_0^t\!\!\int_{\mathcal{F}^\epsilon_\tau }\mathcal{T}(u^\epsilon): \nabla \phi \,\mathrm{d}x\,\mathrm{d}\tau \Big|
\le \sqrt{\epsilon}\,\|\sqrt{\epsilon}\,\mathcal{T}(u^\epsilon)\|_{L^2(\mathcal{F}^{\epsilon T})}\,\|\nabla\phi\|_{L^2(\mathcal{F}^{\epsilon T})},\\&
\Big|\epsilon \!\int_0^t \!\!\int_{\partial \Omega} (u^\epsilon_F \times n) \cdot (\phi^F \times n)\,\mathrm{d}S\,\mathrm{d}\tau\Big|
\le\sqrt{\epsilon}\,\|\sqrt{\epsilon}\,u^\epsilon_F\times  n\|_{L^2(0,T;L^2(\partial\Omega))}\,\|\phi^F\times n\|_{L^2(0,T;L^2(\partial\Omega))},\\&
\Big|\epsilon \!\int_0^t \!\!\int_{\partial \mathcal{B}^\epsilon_\tau } \big[(u^\epsilon_F - u^\epsilon_B) \times n\big] \cdot \big[(\phi^F - \phi^B) \times n\big] \,\mathrm{d}S\,\mathrm{d}\tau\Big|\\&
\le\sqrt{\epsilon}\,\|\sqrt{\epsilon}(u^\epsilon_F-u^\epsilon_B)\times n\|_{L^2(0,T;L^2(\partial\mathcal{B}^\epsilon_\tau ))}\,\|(\phi^F-\phi^B)\times n\|_{L^2(0,T;L^2(\partial\mathcal{B}^\epsilon_\tau ))}.
\end{aligned}
\]
Together with \eqref{eq:EnergyInequalityofVisSys}, this gives 
\begin{equation}\label{eq:Demandforfinalresults}
    \begin{aligned}
        &\epsilon\int_0^t \int_{\mathcal{F}^\epsilon_\tau }\mathcal{T}(u^\epsilon): \nabla \phi \mathrm{d}x\mathrm{d}\tau \xrightarrow{\epsilon\to0} 0,\\
    &\epsilon \int_0^t  \int_{\partial \Omega} (u^\epsilon_F \times n) \cdot (\phi^F \times n)\xrightarrow{\epsilon\to0} 0,\\& \epsilon \int_0^t  \int_{\partial \mathcal{B}^\epsilon_\tau } [(u^\epsilon_F - u^\epsilon_B) \times n] \cdot [(\phi^F - \phi^B) \times n] \xrightarrow{\epsilon\to0} 0.
    \end{aligned}
\end{equation}
 Hence, by \eqref{eq:momentumConvergence1} and the rigid-body convergences, letting $\epsilon\to0$ in \eqref{eq:ViscousMomentumEstimates} yields that $\nu,\mathcal{D},\mathcal{B}^T,\nu^M$ satisfy item (4) (balance of momentum) in Definition~\ref{def:definitionofYoungmeasure}. 

Finally, by the definition of $\mathcal{D}(t)$ in \eqref{eq:Defmeasuers}, $\nu,\mathcal{D},\mathcal{B}^T$ satisfy item (5) (dissipation of kinetic energy). This completes the proof of Theorem \ref{thm:ExistenceofYMVS}. 
\end{proof}

\section{Weak-Strong Uniqueness}

In this section, we prove that the constructed dissipative Young measure-valued solution satisfies the weak-strong uniqueness property.  
Denote the dissipative Young measure-valued solution obtained in Section~2 by
\[
(\nu=\chi_{\overline{\mathcal{F}_{1,t}}}\nu
   +\chi_{\mathcal{B}_{1,t}}\delta_{(\rho_{B1},\sqrt{\rho_{B1}}u_{B1})},
   \mathcal{D}(t), \mathcal{B}_{1,t})_{t\in[0,T]},
\]
and denote the strong solution to the fluid-rigid system by
\[
(\rho_{F2},u_{F2},\rho_{B2},u_{B2},\mathcal{B}_{2,t})_{t\in[0,T]},
\]
where
\begin{equation}
    u_{B i}=V_i(t)+\omega_i(t)\times(x-X_i(t)),\qquad i=1,2.
\end{equation}
We further assume that the strong solution and the dissipative weak solution in the section satisfy the condition in Theorem \ref{thm:Weak-strongUniqueness}.

\subsection{Change of Coordinates}

Define
\[
\Omega_\sigma:=\{x\in\Omega:\mathrm{dist}(x,\partial\Omega)>\sigma\},\qquad
\Omega_{\frac{\sigma}{2}}:=\{x\in\Omega:\mathrm{dist}(x,\partial\Omega)>\tfrac{\sigma}{2}\}.
\]
The assumption $\max_{i=1,2}\sup_{t\in[0,T]}
  \mathrm{dist}(\partial\Omega,\partial\mathcal{B}_{i,t})
  >\frac{3\sigma}{2}$ implies that
\[
\mathcal{B}_{i,t}\subset\Omega_\sigma,\qquad i=1,2,\ t\in(0,T).
\]

Let $\xi(x)\in C_c^\infty(\Omega)$ be a smooth truncation such that $\xi=1$ in $\Omega_\sigma$ and $\xi=0$ in $\Omega\setminus\Omega_{\frac{\sigma}{2}}$. Referring to \cite[Lemma 1]{MR2888201}, we define the truncated velocity fields
\begin{equation}
    \Lambda_i(t,x)=-\frac{1}{2}\text{curl}(\xi(x)(x\times V_i(t)+|x-X_i(t)|^2\omega_i(t))),\qquad i=1,2,
\end{equation}
which satisfy that 
\begin{equation}
    \operatorname{div}(\Lambda_i)=0, ~~ \Lambda_i\big|_{\Omega_\sigma}=u_{Bi}, ~~ \Lambda_i\big|_{\Omega\setminus\Omega_{{\sigma}/{2}}}=0, \quad i=1,2. 
\end{equation}

Moreover, according to \eqref{eq:BoundsonDesity} and \eqref{eq:SolidConvergence111}, $\rho_{B1}$ admits a positive lower bound.  
Together with the energy dissipation inequality, this yields $u_{B1}\in L^\infty(0,T;\mathcal{R}(\Omega))$, i.e.,
\[
\sup_{t\in(0,T)}(|V_1(t)|+|\omega_1(t)|)<+\infty.
\]
Consequently,
\begin{equation}
    \Lambda_1\in L^\infty(0,T;C^\infty(\Omega)),\qquad
    \Lambda_2\in C^1(0,T;C^\infty(\Omega)).
\end{equation}

We now define the flow maps
\[
Z_i(t,\cdot):\Omega\to\Omega,\qquad x\mapsto Z_i(t,x),\quad i=1,2,
\]
as the unique solutions of
\begin{equation}
\begin{cases}
\dfrac{\mathrm{d}}{\mathrm{d}t}Z_i(t,x)=\Lambda_i(t,Z_i(t,x)),\\[6pt]
Z_i(0,x)=x.
\end{cases}
\end{equation}

The following lemma collects useful properties of these coordinate transformations.

\begin{lemma}\label{lem:PropertiesZi}
Define
\[
\tilde{Z}_1(t,\cdot):=Z_1(t,\cdot)\circ Z_2^{-1}(t,\cdot),\qquad
\tilde{Z}_2(t,\cdot):=Z_2(t,\cdot)\circ Z_1^{-1}(t,\cdot).
\]
Then, for each fixed $t$, the following properties hold:
\begin{equation}
\begin{aligned}
&\tilde{Z}_1\circ\tilde{Z}_2=\mathbb{I},\\
&\tilde{Z}_1(t,\cdot) \text{~and~} \tilde{Z}_2(t,\cdot) \text{~are volume-preserving maps,}\\
&\tilde{Z}_1(t,x)=X_1(t)+\mathbb{O}_{1,t}\mathbb{O}_{2,t}^\top(x-X_2(t)),\qquad \text{~in a neighborhood of~}\mathcal{B}_{2,t},\\
&\tilde{Z}_2(t,x)=X_2(t)+\mathbb{O}_{2,t}\mathbb{O}_{1,t}^\top(x-X_1(t)),\qquad \text{~in a neighborhood of~}\mathcal{B}_{1,t},
\end{aligned}
\end{equation}
where $\mathbb{O}_{i,t}$ are the rotation matrices associated with the rigid motions of $\mathcal{B}_{i,t}$, defined by \eqref{eq:rotation-ode}.  
Moreover,
\begin{equation}
\tilde{Z}_2(t,\mathcal{B}_{1,t})=\mathcal{B}_{2,t},\qquad t\in(0,T).
\end{equation}
\end{lemma}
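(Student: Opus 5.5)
The plan is to read off each property from the structure of the flows $Z_i$. Recall that $\Lambda_i$ is smooth in $x$, divergence free, vanishes outside $\Omega_{\sigma/2}$, and is Lipschitz (for $i=1$) or $C^1$ (for $i=2$) in time. By Cauchy--Lipschitz (Carathéodory for $i=1$, since $\Lambda_1$ is only $L^\infty$ in $t$), each $Z_i(t,\cdot)$ is then a well-defined diffeomorphism of $\Omega$. It equals the identity near $\partial\Omega$, and its inverse is the backward flow.

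\textbf{Step 1: inverse and volume preservation.} The identity $\tilde Z_1\circ\tilde Z_2=Z_1\circ Z_2^{-1}\circ Z_2\circ Z_1^{-1}=\mathbb{I}$ is immediate, and symmetrically $\tilde Z_2\circ\tilde Z_1=\mathbb{I}$. For volume preservation I will use Liouville's formula
\[
\partial_t\det\nabla Z_i=(\operatorname{div}\Lambda_i)(t,Z_i)\,\det\nabla Z_i=0,
\]
which gives $\det\nabla Z_i\equiv1$. The chain rule then yields $\det\nabla\tilde Z_i=1$.

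\textbf{Step 2: the rigid motion near the bodies.} This is the key step. On $\Omega_\sigma$ we have $\Lambda_i=u_{Bi}=V_i+\omega_i\times(x-X_i)$. Set $R_i(t,x):=X_i(t)+\mathbb{O}_{i,t}(x-X_i(0))$. Using $X_i'=V_i$ and \eqref{eq:rotation-ode}, it satisfies $\partial_tR_i=V_i+\omega_i\times(R_i-X_i)=u_{Bi}(t,R_i)$ and $R_i(0,x)=x$.

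Let $U$ be a neighborhood of $\mathcal{B}_{i,0}$ such that $R_i(t,U)\subset\Omega_\sigma$ for all $t$. Such a $U$ exists because $\mathcal{B}_{i,t}\subset\Omega_\sigma$ with a positive margin, by the distance assumption $>3\sigma/2$: take $U$ to be a thin tubular neighborhood. By uniqueness of trajectories, $Z_i(t,x)=R_i(t,x)$ for $x\in U$, since both curves solve the same ODE while they remain in $\Omega_\sigma$, and a continuity argument shows $Z_i$ never leaves it.

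I also need $\mathcal{B}_{1,0}=\mathcal{B}_{2,0}=\mathcal{B}_0$ and $X_1(0)=X_2(0)=X_0$; both hold since the two solutions share initial data. Then on the neighborhood $R_2(t,U)$ of $\mathcal{B}_{2,t}$ we have $Z_2^{-1}(t,y)=X_0+\mathbb{O}_{2,t}^\top(y-X_2)$. Composing with $Z_1=R_1$ gives
\[
\tilde Z_1(t,y)=X_1+\mathbb{O}_{1,t}\mathbb{O}_{2,t}^\top(y-X_2).
\]
The formula for $\tilde Z_2$ follows by swapping the indices. For the measure-valued solution, the relation $\mathcal{B}_{1,t}=R_1(t,\mathcal{B}_0)$ follows from the transport formulation \eqref{eq:MassCRigidP}: the indicator $\chi_{\mathcal{B}_{1,t}}$ is a weak solution of a transport equation with the smooth rigid field. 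DiPerna--Lions uniqueness, as used for \eqref{eq:BoundsonDesity}, then identifies it with $\chi_{\mathcal{B}_0}\circ R_1^{-1}$.

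\textbf{Step 3: mapping the bodies.} From $\mathcal{B}_{i,t}=Z_i(t,\mathcal{B}_0)$ we get $\tilde Z_2(t,\mathcal{B}_{1,t})=Z_2(t,Z_1^{-1}(t,\mathcal{B}_{1,t}))=Z_2(t,\mathcal{B}_0)=\mathcal{B}_{2,t}$.

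\textbf{Main obstacle.} The main difficulty lies in Step 2. One issue is the low time regularity of $V_1,\omega_1$: only $L^\infty$ for the measure-valued solution, so $\mathbb{O}_{1,t}$ and $Z_1$ are merely absolutely continuous and one must work with Carathéodory solutions. The other is justifying rigorously that $\mathcal{B}_{1,t}$ is the rigid image of $\mathcal{B}_0$, which I plan to get through the weak transport identity together with DiPerna--Lions uniqueness.
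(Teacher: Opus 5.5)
The paper gives no proof of this lemma. It only cites \cite[Lemma 1]{MR2888201} for the construction of $\Lambda_i$ and treats the properties of $\tilde Z_i$ as standard, as in the works of Caggio et al.\ and Kreml et al.\ that it relies on. Your argument is the standard proof, and it is correct: flows of compactly supported divergence-free fields, Liouville's formula, and uniqueness of trajectories to identify $Z_i$ with $R_i(t,x)=X_i(t)+\mathbb{O}_{i,t}(x-X_0)$ near $\mathcal{B}_0$. It also makes explicit two points the paper leaves implicit: both solutions start from the same body $\mathcal{B}_0$ and centre $X_0$, and the body of the measure-valued solution is the rigid image $R_1(t,\mathcal{B}_0)$.

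Some details to tighten, none of them a real gap:

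\textbf{Time regularity.} Under the hypotheses of Theorem~\ref{thm:Weak-strongUniqueness}, $\Lambda_1$ is smooth and compactly supported in $x$ but only bounded measurable in $t$; calling it ``Lipschitz in time'' is a slip. So it is Carath\'eodory theory, with the variational equation holding for a.e.\ $t$, that gives the diffeomorphism property and Liouville's formula.

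\textbf{Order of Step~2.} Choosing $U$ with $R_1(t,U)\subset\Omega_\sigma$ for all $t$ presupposes $\mathcal{B}_{1,t}=R_1(t,\mathcal{B}_0)$, so prove that first. Then $U=\{x:\operatorname{dist}(x,\mathcal{B}_0)<\delta\}$ with $\delta<\sigma/2$ works for $i=1,2$ at once, since each $R_i(t,\cdot)$ is an isometry. This needs the distance hypothesis read as the uniform bound $\min_i\inf_t\operatorname{dist}(\partial\Omega,\partial\mathcal{B}_{i,t})>3\sigma/2$. The $\max\sup$ in the theorem is evidently a misprint; the paper itself uses it to get $\mathcal{B}_{i,t}\subset\Omega_\sigma$ for all $t$. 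The continuity argument is unnecessary: for $x\in U$, $R_i(\cdot,x)$ already solves the $\Lambda_i$-ODE on $[0,T]$, so uniqueness gives $Z_i=R_i$ on $U$ directly.

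\textbf{Identifying $\mathcal{B}_{1,t}$.}
\begin{enumerate}[(1)]
\item Matching the flow of $u_{B1}$ with $R_1$ uses $X_1'=V_1$. For the measure-valued solution, obtain this by testing the first identity in \eqref{eq:MassCRigidP} with $\phi=1$ and $\phi=x_k$, using that $X_1$ is the centre of mass.
\item Since $\chi_{\mathcal{B}_{1,t}}$ is supported in $\Omega_\sigma$, you can replace $u_{B1}$ by $\Lambda_1$. Classical duality uniqueness for a bounded, Lipschitz-in-$x$ field then suffices; DiPerna--Lions is not needed.
\item Uniqueness only gives $\chi_{\mathcal{B}_{1,t}}=\chi_{R_1(t,\mathcal{B}_0)}$ a.e. Equality of sets needs some regularity of $\mathcal{B}_{1,t}$, for instance that it is a regular open set. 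Alternatively, note that the limit body built in Section~2 is a rigid image of $\mathcal{B}_0$ by construction, as used in Appendix~\ref{app:proof-lem-ConvergeBou1}.
\end{enumerate}
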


Lemma~\ref{lem:PropertiesZi} allows us to redefine $\rho_{F2},u_{F2}$ and $\rho_{B2},u_{B2}$ over the regions $\cup_{t\in(0,T)}\{t\}\times\mathcal{F}_{1,t}$ and $\cup_{t\in(0,T)}\{t\}\times\mathcal{B}_{1,t}$, respectively.  
Introducing the change of variables
\begin{equation}
\tau=t,\qquad y=\tilde{Z}_2(t,x),
\end{equation}
we set
\begin{equation}\label{eq:TransformFluidp}
U_2(t,x)=\frac{\partial x}{\partial y}(\tau,y)\,u_{F2}(\tau,y),\qquad
R_2(t,x)=\rho_{F2}(\tau,y),\qquad
P_2(t,x)=p_{F2}(\tau,y),
\end{equation}
and
\begin{equation}\label{eq:TransformedVelocity}
\begin{aligned}
&\tilde{V}_2(t)=\mathbb{O}_{1,t}\mathbb{O}_{2,t}^\top V_2(t), \qquad 
\tilde{\omega}_2(t)=\mathbb{O}_{1,t}\mathbb{O}_{2,t}^\top \omega_2(t),\\
&\tilde{u}_{B2}(t,x)=\tilde{V}_2(t)+\tilde{\omega}_2(t)\times(x-X_1(t)),\qquad
\tilde{\rho}_{B2}(t,x)=\rho_{B2}(\tau,y).
\end{aligned}
\end{equation}

Using the Einstein summation convention, we note
\[
\partial_\tau=\partial_t+\frac{\partial x_i}{\partial \tau}\partial_{x_i},\qquad
\partial_{y_i}=\frac{\partial x_j}{\partial y_i}\partial_{x_j},
\]
and
\[
u_{F2}^i(\tau,y)=\frac{\partial y_i}{\partial x_l}(t,x)\,U_2^l(t,x).
\]

Moreover, recalling the Jacobi's formula $$\partial_{x_l}\operatorname{det}(\frac{\partial y}{\partial x})=\operatorname{det}(\frac{\partial y}{\partial x})\operatorname{tr}\Big(\frac{\partial x}{\partial y}\cdot\partial_{x_l}\frac{\partial y}{\partial x}\Big),$$ 
and the fact that $ \tilde{Z}_2(t,\cdot)$ is volume-preserving maps,  we have that 
$$\operatorname{tr}\Big(\frac{\partial x}{\partial y}\cdot\partial_{x_l}\frac{\partial y}{\partial x}\Big)=\frac{\partial x_j}{\partial y_i}\partial_{x_l}(\frac{\partial y_i}{\partial x_j})=0.$$
Here $\operatorname{det}(\cdot)$ and $\operatorname{tr}(\cdot)$ denote the determinant and the trace of a matrix, respectively.

Now, we are ready to derive the governing equations for $R_2,U_2,P_2,\tilde{u}_{B2}$.  To this end, a direct computation yields
\begin{equation}\label{eq:MassConserv}
\begin{aligned}
0&=\partial_\tau\rho_{F2}(\tau,y)+\partial_{y_i}(\rho_{F2}u_{F2}^i)\\
&=\partial_tR_2+\frac{\partial x_i}{\partial\tau}\partial_{x_i}R_2
+\frac{\partial x_j}{\partial y_i}\partial_{x_j}(R_2U_2^l\frac{\partial y_i}{\partial x_l})\\
&=\partial_tR_2+\partial_{x_i}(R_2U_2^i)+\frac{\partial x_i}{\partial\tau}\partial_{x_i}R_2+\frac{\partial x_j}{\partial y_i}\partial_{x_j}(\frac{\partial y_i}{\partial x_l})R_2U_2^l\\&=\partial_tR_2+\partial_{x_i}(R_2U_2^i)+\frac{\partial x_i}{\partial\tau}\partial_{x_i}R_2+\frac{\partial x_j}{\partial y_i}\partial_{x_l}(\frac{\partial y_i}{\partial x_j})R_2U_2^l\\&=\partial_tR_2+\partial_{x_i}(R_2U_2^i)+\frac{\partial x_i}{\partial\tau}\partial_{x_i}R_2.
\end{aligned}
\end{equation}

Similarly,
\begin{equation}
\begin{aligned}
0&=\partial_\tau(\rho_{F2}u_{F2}^i)+\partial_{y_l}(\rho_{F2}u_{F2}^iu_{F2}^l+p_{F2}\delta^{il})\\
&=\frac{\partial y_i}{\partial x_l}\partial_t(R_2U_2^l)
+\partial_t\!\Big(\frac{\partial y_i}{\partial x_l}\Big)R_2U_2^l
+\frac{\partial x_j}{\partial\tau}\frac{\partial y_i}{\partial x_l}\partial_{x_j}(R_2U_2^l)
+\frac{\partial x_j}{\partial\tau}\partial_{x_j}\!\Big(\frac{\partial y_i}{\partial x_l}\Big)R_2U_2^l\\& \quad+\frac{\partial y_i}{\partial x_m}\partial_{x_j}\big(R_2U_2^mU_2^j\big)+\partial_{x_j}\Big(\frac{\partial y_i}{\partial x_m}\Big)R_2U_2^mU_2^j+\frac{\partial x_j}{\partial y_l}\frac{\partial y_i}{\partial x_m}\partial_{x_j}\Big(\frac{\partial y_l}{\partial x_n}\Big)R_2 U_2^m U_2^n\\&\quad+\frac{\partial x_j}{\partial y_l}\partial_{x_j}\Big(P_2\delta^{il}\Big),
\end{aligned}
\end{equation}
which simplifies to
\begin{equation}\label{MometumConserv}
\begin{aligned}
0&=\partial_t(R_2U_2^l)+\partial_{x_j}(R_2U_2^lU_2^j+P_2\delta^{lj})\\
&\quad+\frac{\partial x_l}{\partial y_i}\!\Big(
\partial_t\frac{\partial y_i}{\partial x_k}
+\frac{\partial x_j}{\partial\tau}\partial_{x_j}\frac{\partial y_i}{\partial x_k}\Big)
R_2U_2^k
+\frac{\partial x_j}{\partial\tau}\partial_{x_j}(R_2U_2^l)\\&\quad+\frac{\partial x_l}{\partial y_i}\partial_{x_j}\Big(\frac{\partial y_i}{\partial x_m}\Big)R_2U_2^mU_2^j+\Big(\frac{\partial x_l}{\partial y_i}\frac{\partial x_j}{\partial y_i}-\delta^{lj}\Big)\partial_{x_j}P_2
\end{aligned}
\end{equation}

The rigid-body equations transform into
\begin{equation}\label{eq:Rigidmotionconservation}
\begin{aligned}
m\frac{\mathrm{d}\tilde{V}_2(t)}{\mathrm{d}t}
&=m(\tilde{\omega}_2-\omega_1)\times\tilde{V}_2
+\int_{\partial\mathcal{B}_1(t)}P_2\,n\,\mathrm{d}S,\\
\mathcal{J}_1\frac{\mathrm{d}\tilde{\omega}_2(t)}{\mathrm{d}t}
&=\mathcal{J}_1\tilde{\omega}_2\times\tilde{\omega}_2
-(\tilde{\omega}_2-\omega_1)\times(\mathcal{J}_1\tilde{\omega}_2)
+\int_{\partial\mathcal{B}_1(t)}(x-X_1(t))\times P_2\,n\,\mathrm{d}S,
\end{aligned}
\end{equation}
where $\mathcal{J}_1$ and $\mathcal{J}_2$ are the inertia tensors of the two configurations and satisfy
\[
\mathcal{J}_1=\mathbb{O}_{1,t}\mathbb{O}_{2,t}^\top\mathcal{J}_2\mathbb{O}_{2,t}\mathbb{O}_{1,t}^\top.
\]
The corresponding boundary conditions are
\begin{equation}\label{eq:Bound}
\begin{aligned}
U_2\cdot n&=\tilde{u}_{B2}\cdot n\quad\text{on }(0,T)\times\partial\mathcal{B}_{1,t},\\
U_2\cdot n&=0\quad\text{on }(0,T)\times\partial\Omega.
\end{aligned}
\end{equation}

Finally, we record the following estimates for $\tilde{Z}_i(t,x)$, $i=1,2$ (see \cite[Lemmas~3.1–3.2]{caggio2021measure} and \cite[Corollary~1]{MR3375542}).

\begin{lemma}\label{PropertiesonTransform}
For $t\in(0,T)$, it holds that
\begin{equation}
\begin{aligned}
\|\tilde{Z}_i(t,\cdot)-\mathrm{Id}\|_{W^{3,\infty}(\mathcal{F}_t)}
&\le \mathcal{O}(1)\big(\|V_1-\tilde{V}_2\|_{L^2(0,T)}
+\|\omega_1-\tilde{\omega}_2\|_{L^2(0,T)}\big),\\
\|\partial_t\tilde{Z}_i(t,\cdot)\|_{W^{1,\infty}(\mathcal{F}_t)}
&\le \mathcal{O}(1)\big(|V_1(t)-\tilde{V}_2(t)|
+|\omega_1(t)-\tilde{\omega}_2(t)|\big).
\end{aligned}
\end{equation}
\end{lemma}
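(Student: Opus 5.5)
The plan is to reduce both estimates to properties of the flows $Z_1,Z_2$ of the divergence-free fields $\Lambda_1,\Lambda_2$. Write $\tilde Z_2(t,\cdot)=Z_2(t,\cdot)\circ Z_1^{-1}(t,\cdot)$ and set $W(t,x):=\tilde Z_2(t,x)-x$. Since $Z_i(t,Z_i^{-1}(t,y))=y$, differentiating in $t$ gives
\[
\partial_t Z_i^{-1}(t,y)=-(\nabla Z_i)^{-1}(t,Z_i^{-1}(t,y))\,\Lambda_i(t,y).
\]
Hence
\[
\partial_t\tilde Z_2(t,x)=\Lambda_2(t,\tilde Z_2(t,x))-(\nabla Z_2)(Z_1^{-1}x)\,(\nabla Z_1)^{-1}(Z_1^{-1}x)\,\Lambda_1(t,x),
\]
and this equals $\Lambda_2(t,\tilde Z_2)-\nabla\tilde Z_2(t,x)\Lambda_1(t,x)$. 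Thus $\tilde Z_2$ solves the transport-type equation
\[
\partial_t\tilde Z_2+(\Lambda_1\cdot\nabla)\tilde Z_2=\Lambda_2(t,\tilde Z_2),\qquad \tilde Z_2(0,x)=x.
\]
Subtracting $\Lambda_1(t,x)$ from both sides, $W$ satisfies
\[
\partial_t W+(\Lambda_1\cdot\nabla)W=\big[\Lambda_2(t,x+W)-\Lambda_2(t,x)\big]+\big[\Lambda_2(t,x)-\Lambda_1(t,x)\big].
\]
The analogous identity holds for $\tilde Z_1$, with the roles of the indices swapped.

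\textbf{Key step: linearity of $\Lambda_i$ in the rigid data.} Since $\Lambda_i=-\tfrac12\operatorname{curl}\big(\xi(x)(x\times V_i+|x-X_i|^2\omega_i)\big)$ with $\xi$ smooth and compactly supported, we get, for every $k$,
\[
\|\Lambda_2-\Lambda_1\|_{W^{k,\infty}}\le C_k\big(|V_1-V_2|+|\omega_1-\omega_2|+|X_1-X_2|\big).
\]
Here we measure everything in the frame adapted to body 1, so $V_2,\omega_2$ are replaced by the rotated quantities $\tilde V_2,\tilde\omega_2$ of \eqref{eq:TransformedVelocity}. The difference $|X_1-X_2|\le\int_0^t|V_1-\tilde V_2|$ is then absorbed by Cauchy--Schwarz into $\sqrt T\,\|V_1-\tilde V_2\|_{L^2}$. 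The rotation discrepancy $\mathbb O_{1,t}\mathbb O_{2,t}^\top-\mathbb I$ is controlled similarly through \eqref{eq:rotation-ode}, by Gronwall's lemma.

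\textbf{$W^{3,\infty}$ bound.} Differentiating the equation for $W$ up to three times in $x$ and integrating along characteristics of $\Lambda_1$ gives
\[
\|W(t)\|_{W^{3,\infty}}\le C\int_0^t\|W\|_{W^{3,\infty}}\,ds+C\int_0^t\big(|V_1-\tilde V_2|+|\omega_1-\tilde\omega_2|\big)\,ds.
\]
Here $C$ depends on $\sup_t(|V_i|+|\omega_i|)$, which is finite by the energy bound and \eqref{eq:RestrictontestforstrongSolution}, and on polynomial terms in the lower derivatives of $W$; these are bounded a priori because $Z_i\in W^{3,\infty}$ uniformly on $[0,T]$. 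Gronwall's lemma and Cauchy--Schwarz in time then yield the first estimate, with an $\mathcal O(1)$ depending on $T$. By the triangle inequality this bound on $\Omega$ restricts to one on $\mathcal F_t$.

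\textbf{Time-derivative bound.} Write
\[
\partial_t\tilde Z_2=\big[\Lambda_2(t,\tilde Z_2)-\Lambda_2(t,x)\big]+\big[\Lambda_2-\Lambda_1\big]-(\nabla W)\Lambda_1 .
\]
Its $W^{1,\infty}$ norm is bounded by $C\|W\|_{W^{2,\infty}}+C\big(|V_1(t)-\tilde V_2(t)|+|\omega_1(t)-\tilde\omega_2(t)|+|X_1-X_2|\big)$. The $W$ and $X$ terms are controlled, via the first estimate, only by the $L^2(0,t)$ norm of the velocity differences, not the pointwise one. To match the stated pointwise form, I would follow \cite{caggio2021measure} and \cite{MR3375542}: use a modified cutoff so that the flows are built to agree up to the rigid change of frame. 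In that construction the only pointwise source is the current velocity difference. Failing that, one accepts an additional $L^2$-in-time term, which is harmless in the subsequent Gronwall argument.

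\textbf{Main obstacle.} This last point is where I expect the difficulty: reconciling the pointwise-in-time form of the second estimate with the fact that the difference of positions and orientations depends on the whole history. I would handle it by working in the rotated frame, where $X_1-X_2$ and the orientation discrepancy enter only through $\tilde Z_2$ itself.
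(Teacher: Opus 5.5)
\textbf{First estimate.} Your argument is correct and takes a different route from the paper. The paper gives no argument at all; it only cites \cite[Lemmas~3.1--3.2]{caggio2021measure} and \cite[Corollary~1]{MR3375542}. You instead derive $\partial_t\tilde Z_2+(\Lambda_1\cdot\nabla)\tilde Z_2=\Lambda_2(t,\tilde Z_2)$ and close a $W^{3,\infty}$ Gronwall estimate along the characteristics of $\Lambda_1$. This is self-contained and even gives the sharper bound by $\int_0^t\big(|V_1-\tilde V_2|+|\omega_1-\tilde\omega_2|\big)\,\mathrm{d}s$. The frame discrepancy needs no Gronwall argument: $\frac{\mathrm d}{\mathrm dt}\big(\mathbb O_{1,t}\mathbb O_{2,t}^\top\big)=[\omega_1-\tilde\omega_2]_\times\,\mathbb O_{1,t}\mathbb O_{2,t}^\top$. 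Hence $|\mathbb O_{1,t}\mathbb O_{2,t}^\top-\mathbb I|\le\int_0^t|\omega_1-\tilde\omega_2|\,\mathrm{d}s$, and this also controls $|V_2-\tilde V_2|$, $|\omega_2-\tilde\omega_2|$ and $|X_1-X_2|$.

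\textbf{Second estimate: the gap.} You do not prove it. The ``modified cutoff'' is never constructed, and any modification changes the maps $\tilde Z_i$ themselves. Their properties in Lemma~\ref{lem:PropertiesZi} (rigidity near $\mathcal B_{1,t}$, $\tilde Z_2(t,\mathcal B_{1,t})=\mathcal B_{2,t}$, volume preservation) are exactly what \eqref{eq:transormedStrongSolution} rests on. Your fallback with an extra integral term is a different statement.

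Your diagnosis is in fact right, and the bound fails for $\tilde Z_2=Z_2\circ Z_1^{-1}$ as defined. Near $\mathcal B_{1,t}$ your formula gives $\partial_t\tilde Z_2=\mathbb O_{2,t}\mathbb O_{1,t}^\top\big[(\tilde V_2-V_1)+(\tilde\omega_2-\omega_1)\times(x-X_1)\big]$, which is pointwise controlled. In the layer $\{\nabla\xi\neq0\}\subset\mathcal F_t$, however, $\Lambda_2(t,\tilde Z_2)-\nabla\tilde Z_2\,\Lambda_1$ is a commutator that remembers the history.

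Concretely, take pure translations, so $\Lambda_i=A(x)V_i$ with $A(x)V:=-\frac12\operatorname{curl}(\xi(x)\,x\times V)$, independent of $X_i$. Let $V_1=0$, $V_2=\epsilon e_1$ on $[0,1]$, and $V_1=V_2=\epsilon e_2$ afterwards (the switch can be smoothed).
\begin{itemize}
\item Then $\tilde Z_2(1,\cdot)$ is the time-one flow of $\epsilon Ae_1$.
\item Just after $t=1$, $\partial_t\tilde Z_2=\epsilon^2[Ae_1,Ae_2]+O(\epsilon^3)$, while the right-hand side of the second estimate is $0$.
\item The bracket vanishes where $\xi\equiv1$ or $\xi\equiv0$, but not on the whole layer. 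For a ball and radial $\xi$ one computes $[Ae_1,Ae_2](x)=\phi(r)\big((\hat x\cdot e_1)e_2-(\hat x\cdot e_2)e_1\big)$ with $\phi=2\xi\xi'+\frac r2\xi\xi''+\frac r4(\xi')^2$.
\item $\phi$ cannot vanish identically on $\{0<\xi<1\}$: otherwise ODE uniqueness from $\xi=1$, $\xi'=0$ at the inner edge would force $\xi\equiv1$.
\end{itemize}
So no argument based on this construction yields the stated pointwise bound.

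\textbf{What you can prove and use instead.} Your computation gives the weaker estimate
\[
\|\partial_t\tilde Z_i(t,\cdot)\|_{W^{1,\infty}(\mathcal F_t)}\le\mathcal O(1)\Big(|V_1-\tilde V_2|(t)+|\omega_1-\tilde\omega_2|(t)+\|V_1-\tilde V_2\|_{L^2(0,t)}+\|\omega_1-\tilde\omega_2\|_{L^2(0,t)}\Big).
\]
This suffices for what follows: it only turns \eqref{eq:H0} into $\|\mathcal H\|^2_{L^\infty(\mathcal F_t)}\le\mathcal O(1)\big(E_{\mathrm{rel}}(t)+\int_0^tE_{\mathrm{rel}}\,\mathrm{d}\tau\big)$. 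That is the same form already used for $(\mathcal G-\mathcal HU)/R$, so the Gronwall argument still closes.

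If you want the pointwise form, you must change the construction and then re-prove Lemma~\ref{lem:PropertiesZi} for the new maps. One possibility: take $\tilde Z_2(t,\cdot)$ to be the time-one flow of a spatially fixed, divergence-free cut-off of the Killing field generating $x\mapsto X_2+\mathbb O_{2,t}\mathbb O_{1,t}^\top(x-X_1)$. Its time derivative is controlled by the current velocity differences, as long as the relative displacement stays small.
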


\subsection{Proof of Theorem \ref{thm:Weak-strongUniqueness} \,(Relative energy analysis)}

In this subsection, for clarity, we denote the measure-valued solution by \((\nu,\mathcal{D}(t),\mathcal{B}_t)_{t\in(0,T)}\) and the redefined (transformed) strong solution by \((R,U,\tilde V(t),\tilde\omega(t),\tilde u_B,\tilde\rho_B, \mathcal{B}_t)\).

According to \eqref{eq:MassConserv}--\eqref{eq:Bound}, it holds that
\begin{equation}\label{eq:transormedStrongSolution}
\left\{
\begin{aligned}
&\partial_t R + \partial_{x_i}(R\,U^i)
     \;=\; -\frac{\partial x_i}{\partial \tau}\,\partial_{x_i}R \;=:\; \mathcal{H},\\
&\partial_t(R\,U^l) + \partial_{x_j}(R\,U^lU^j + P\,\delta^{lj}) \\
&\quad \;=\; -\frac{\partial x_l}{\partial y_i}\!\Big(
\partial_t\frac{\partial y_i}{\partial x_k}
+\frac{\partial x_j}{\partial\tau}\partial_{x_j}\frac{\partial y_i}{\partial x_k}\Big)
R_2U_2^k
-\frac{\partial x_j}{\partial\tau}\partial_{x_j}(R_2U_2^l)\\&\qquad-\frac{\partial x_l}{\partial y_i}\partial_{x_j}\Big(\frac{\partial y_i}{\partial x_m}\Big)R_2U_2^mU_2^j-\Big(\frac{\partial x_l}{\partial y_i}\frac{\partial x_j}{\partial y_i}-\delta^{lj}\Big)\partial_{x_j}P_2
      \;=:\; \mathcal{G}^l,
      \qquad (t,x)\in \mathcal{F}^T,\\
&U\cdot n=\tilde u_B\cdot n\quad \text{on } (0,T)\times\partial\mathcal{B}_t,\\
&U\cdot n=0\quad \text{on } (0,T)\times\partial\Omega,\\
&m\,\frac{\mathrm{d}\tilde V(t)}{\mathrm{d}t}
   = m(\tilde\omega-\omega)\times \tilde V
     + \int_{\partial\mathcal{B}(t)} P\,n\,\mathrm{d}S,\\
&\mathcal{J}\,\frac{\mathrm{d}\tilde\omega(t)}{\mathrm{d}t}
   = \mathcal{J}\tilde\omega\times\tilde\omega
     -(\tilde\omega-\omega)\times (\mathcal{J}\tilde\omega)
     + \int_{\partial\mathcal{B}(t)} (x-X(t))\times P\,n\,\mathrm{d}S,
\qquad t\in(0,T).
\end{aligned}
\right.
\end{equation}

Following \cite{gwiazda2015weak,MR3567640}, define the relative energy
\begin{equation}\label{eq:defofRE}
\begin{aligned}
E_{\mathrm{rel}}(t)
&=\int_{\mathcal{F}_t}\frac{1}{2}\Big\langle\big|\lambda'-\sqrt{\lambda_1}\,U\big|^2,\nu\Big\rangle\,\mathrm{d}x
 +\int_{\mathcal{F}_t}\Big\langle \frac{\lambda_1^\gamma}{\gamma-1}
     -\frac{\gamma R^{\gamma-1}\lambda_1}{\gamma-1}+R^\gamma,\nu\Big\rangle\,\mathrm{d}x\\
&\quad + \int_{\mathcal{B}_t}\frac{1}{2}\,\rho_B\,|u_B-\tilde u_B|^2\,\mathrm{d}x,
\end{aligned}
\end{equation}
which expands to
\begin{equation}\label{ExpandedRelativeEnergy}
\begin{aligned}
E_{\mathrm{rel}}(t)
&=\int_{\mathcal{F}_t}\Big\langle \frac{|\lambda'|^2}{2}+\frac{\lambda_1^\gamma}{\gamma-1},\nu\Big\rangle\,\mathrm{d}x
  +\frac{1}{2}\int_{\mathcal{B}_t}\rho_B|u_B|^2\,\mathrm{d}x\\
&\quad+\int_{\mathcal{F}_t}\Big(\frac{1}{2}\langle \lambda_1,\nu\rangle |U|^2+R^\gamma\Big)\,\mathrm{d}x
     +\frac{1}{2}\int_{\mathcal{B}_t}\rho_B|\tilde u_B|^2\,\mathrm{d}x\\
&\quad-\int_{\mathcal{F}_t}\Big\langle \sqrt{\lambda_1}\lambda',\nu\Big\rangle\!\cdot U
        +\frac{\gamma}{\gamma-1}\langle \lambda_1,\nu\rangle R^{\gamma-1}\,\mathrm{d}x
      -\int_{\mathcal{B}_t}\rho_B\,u_B\cdot \tilde u_B\,\mathrm{d}x\\
&=E_{\mathrm{mvs}}(t)
  +\int_{\mathcal{F}_t}\Big(\frac{1}{2}\langle \lambda_1,\nu\rangle |U|^2+R^\gamma\Big)\,\mathrm{d}x
  +\frac{1}{2}\int_{\mathcal{B}_t}\rho_B|\tilde u_B|^2\,\mathrm{d}x\\
&\quad-\int_{\mathcal{F}_t}\Big\langle \sqrt{\lambda_1}\lambda',\nu\Big\rangle\!\cdot U
      +\frac{\gamma}{\gamma-1}\langle \lambda_1,\nu\rangle R^{\gamma-1}\,\mathrm{d}x
      -\int_{\mathcal{B}_t}\rho_B\,u_B\cdot \tilde u_B\,\mathrm{d}x,
\end{aligned}
\end{equation}
where \(E_{\mathrm{mvs}}(t)\) is defined in \eqref{eq:EnergyMVS}.

Taking \(\Psi^F=U\), \(\Psi^B=\tilde u_B\) in \eqref{MVS MomentumC} yields
\begin{equation}\label{testby U}
\begin{aligned}
&\int_{\mathcal{F}_t}\Big\langle\sqrt{\lambda_1}\lambda',\nu\Big\rangle\!\cdot U(t,\cdot)\,\mathrm{d}x
+\int_{\mathcal{B}_t}\rho_B u_B\cdot \tilde u_B\,\mathrm{d}x
\\&=\int_0^t\!\!\int_{\mathcal{F}_\tau}
  \Big\langle\sqrt{\lambda_1}\lambda',\nu\Big\rangle\!\cdot \partial_\tau U
 +\Big\langle\lambda'\!\otimes\!\lambda'+\lambda_1^\gamma \mathbb{I},\nu\Big\rangle\!:\!\nabla U\,\mathrm{d}x\,\mathrm{d}\tau\\
&\quad+\int_0^t\!\!\int_{\mathcal{B}_\tau}\rho_B u_B\cdot \partial_\tau \tilde u_B\,\mathrm{d}x\,\mathrm{d}\tau
+\int_0^t\!\!\int_{\overline{\mathcal{F}_\tau}}\nabla U\,\mathrm{d}\nu^M\,\mathrm{d}\tau\\
&\quad+\int_{\mathcal{F}_0}R_0|U_0|^2\,\mathrm{d}x
+\int_{\mathcal{B}_0}\rho_{B0}|u_{B0}|^2\,\mathrm{d}x.
\end{aligned}
\end{equation}

Similarly, taking \(\psi|_{\mathcal{F}^T}=\frac{1}{2}|U|^2\) and \(\psi|_{\mathcal{F}^T}=\gamma R^{\gamma-1}\) respectively in \eqref{MVS MassC}, we obtain
\begin{equation}\label{textby UU}
\begin{aligned}
&\frac{1}{2}\int_{\mathcal{F}_t}\langle \lambda_1,\nu\rangle |U|^2\,\mathrm{d}x
\\&=\int_0^t\!\!\int_{\mathcal{F}_\tau}
 U\cdot \partial_\tau U\,\langle \lambda_1,\nu\rangle
 +\Big\langle \sqrt{\lambda_1}\lambda',\nu\Big\rangle\!\cdot(\nabla U\,U)\,\mathrm{d}x\,\mathrm{d}\tau
 +\frac{1}{2}\int_{\mathcal{F}_0}R_0|U_0|^2\,\mathrm{d}x,
\end{aligned}
\end{equation}
and
\begin{equation}\label{testby R power}
\begin{aligned}
&\int_{\mathcal{F}_t}\langle \lambda_1,\nu\rangle \gamma R^{\gamma-1}\,\mathrm{d}x
\\&=\int_0^t\!\!\int_{\mathcal{F}_\tau}
 (\gamma-1)\gamma R^{\gamma-2}\,\partial_\tau R\,\langle \lambda_1,\nu\rangle
 +(\gamma-1)\gamma R^{\gamma-2}\Big\langle \sqrt{\lambda_1}\lambda',\nu\Big\rangle\!\cdot\nabla R\,\mathrm{d}x\,\mathrm{d}\tau+\int_{\mathcal{F}_0}\gamma R_0^\gamma\,\mathrm{d}x.
\end{aligned}
\end{equation}

For the rigid part, taking \(\phi|_{\mathcal{B}^T}=\tilde u_B\) in \eqref{eq:MassCRigidP} gives
\begin{equation}\label{eq:RigidMomentum}
\begin{aligned}
    \frac{1}{2}\int_{\mathcal{B}_t}\rho_B|\tilde u_B|^2\,\mathrm{d}x
=\int_0^t\!\!\int_{\mathcal{B}_\tau}\rho_B\,\tilde u_B\cdot \partial_\tau \tilde u_B
+\rho_B u_B\cdot\big(\nabla\tilde u_B\,\tilde u_B\big)\,\mathrm{d}x\,\mathrm{d}\tau
+\frac{1}{2}\int_{\mathcal{B}_0}\rho_{B0}|u_{B0}|^2\,\mathrm{d}x.
\end{aligned}
\end{equation}

Plugging \eqref{testby U}--\eqref{eq:RigidMomentum} into \eqref{ExpandedRelativeEnergy}, and using the energy inequality \eqref{eq:EnergyMVS} for the dissipative Young measure-valued solution, we arrive at
\begin{equation}\label{SimplifiedErel}
\begin{aligned}
E_{\mathrm{rel}}(t)
&\le \int_{\mathcal{F}_t}R^\gamma\,\mathrm{d}x
+\int_0^t\!\!\int_{\mathcal{F}_\tau} U\cdot \partial_\tau U\,\langle \lambda_1,\nu\rangle
+\Big\langle \sqrt{\lambda_1}\lambda',\nu\Big\rangle\!\cdot(\nabla U\,U)\,\mathrm{d}x\,\mathrm{d}\tau\\
&\quad+\int_0^t\!\!\int_{\mathcal{B}_\tau}\rho_B(\tilde u_B-u_B)\cdot \partial_\tau \tilde u_B
+\rho_B u_B\cdot(\nabla\tilde u_B\,\tilde u_B)\,\mathrm{d}x\,\mathrm{d}\tau\\
&\quad-\int_0^t\!\!\int_{\mathcal{F}_\tau}\Big\langle \sqrt{\lambda_1}\lambda',\nu\Big\rangle\!\cdot \partial_\tau U
+\Big\langle \lambda'\!\otimes\!\lambda'+\lambda_1^\gamma\mathbb{I},\nu\Big\rangle\!:\!\nabla U\,\mathrm{d}x\,\mathrm{d}\tau
-\int_0^t\!\!\int_{\overline{\mathcal{F}_\tau}}\nabla U\,\mathrm{d}\nu^M\,\mathrm{d}\tau\\
&\quad-\gamma\int_0^t\!\!\int_{\mathcal{F}_\tau} R^{\gamma-2}\,\partial_\tau R\,\langle \lambda_1,\nu\rangle
+\gamma R^{\gamma-2}\Big\langle \sqrt{\lambda_1}\lambda',\nu\Big\rangle\!\cdot\nabla R\,\mathrm{d}x\,\mathrm{d}\tau
-\int_{\mathcal{F}_0}R_0^\gamma\,\mathrm{d}x-\mathcal{D}(t).
\end{aligned}
\end{equation}
Together with \eqref{ersatzpoincare}, this yields
\begin{equation}\label{eq:JL1}
\begin{aligned}
&E_{\mathrm{rel}}(t)+\mathcal{D}(t)
\\&\le \mathcal{O}(1)\int_0^t\mathcal{D}(\tau)\,\mathrm{d}\tau
+\underbrace{\int_{\mathcal{F}_t}R^\gamma\,\mathrm{d}x - \int_{\mathcal{F}_0}R_0^\gamma\,\mathrm{d}x
-\gamma\int_0^t\!\!\int_{\mathcal{F}_\tau} R^{\gamma-2}\,\partial_\tau R\,\langle \lambda_1,\nu\rangle\,\mathrm{d}x\,\mathrm{d}\tau}_{\mathcal{I}_0}\\
&\quad+\underbrace{\int_0^t\!\!\int_{\mathcal{F}_\tau}
  U\cdot \partial_\tau U\,\langle \lambda_1,\nu\rangle
 +\Big\langle \sqrt{\lambda_1}\lambda',\nu\Big\rangle\!\cdot(\nabla U\,U)
 -\Big\langle \sqrt{\lambda_1}\lambda',\nu\Big\rangle\!\cdot \partial_\tau U
 -\Big\langle \lambda'\!\otimes\!\lambda',\nu\Big\rangle\!:\!\nabla U\,\mathrm{d}x\,\mathrm{d}\tau}_{\mathcal{I}_1}\\
&\quad\underbrace{-\int_0^t\!\!\int_{\mathcal{F}_\tau}\langle \lambda_1^\gamma,\nu\rangle\,\nabla\!\cdot U
+\gamma R^{\gamma-2}\Big\langle \sqrt{\lambda_1}\lambda',\nu\Big\rangle\!\cdot\nabla R\,\mathrm{d}x\,\mathrm{d}\tau}_{\mathcal{I}_2}\\
&\quad+\underbrace{\int_0^t\!\!\int_{\mathcal{B}_\tau}
  \rho_B(\tilde u_B-u_B)\cdot \partial_\tau \tilde u_B
 +\rho_B u_B\cdot(\nabla\tilde u_B\,\tilde u_B)\,\mathrm{d}x\,\mathrm{d}\tau}_{\mathcal{I}_3}.
\end{aligned}
\end{equation}

To clarify the cancellations in \(\sum_{i=0}^3 \mathcal{I}_i\), we rewrite each term.

By the Reynolds transport theorem (see Appendix),
\begin{equation}\label{eq:I_0}
\begin{aligned}
\mathcal{I}_0
&=\int_{\mathcal{F}_t}R^\gamma\,\mathrm{d}x - \int_{\mathcal{F}_0}R_0^\gamma\,\mathrm{d}x
  -\gamma\int_0^t\!\!\int_{\mathcal{F}_\tau} R^{\gamma-2}\,\partial_\tau R\,\langle \lambda_1,\nu\rangle\,\mathrm{d}x\,\mathrm{d}\tau\\
&=\underbrace{\int_0^t\!\!\int_{\mathcal{F}_\tau}\gamma R^{\gamma-2}\,\partial_\tau R\,(R-\langle\lambda_1,\nu\rangle)\,\mathrm{d}x\,\mathrm{d}\tau}_{\mathcal{I}_{01}}
  +\int_0^t\!\!\int_{\partial\mathcal{F}_\tau} R^\gamma\,u_B\cdot n\,\mathrm{d}S\,\mathrm{d}\tau.
\end{aligned}
\end{equation}

Using the second line in \eqref{eq:transormedStrongSolution}, direct computations give
\begin{equation}\label{eq:I_1}
\begin{aligned}
\mathcal{I}_1
&=\int_0^t\!\!\int_{\mathcal{F}_\tau}
  \big\langle \lambda_1U-\sqrt{\lambda_1}\lambda',\nu\big\rangle\!\cdot \partial_\tau U
 +\big\langle U\otimes(\lambda_1U-\sqrt{\lambda_1}\lambda'),\nu\big\rangle\!:\!\nabla U\,\mathrm{d}x\,\mathrm{d}\tau\\
&\quad+\int_0^t\!\!\int_{\mathcal{F}_\tau}
  \big\langle (\lambda'-\sqrt{\lambda_1}U)\otimes(\sqrt{\lambda_1}U-\lambda'),\nu\big\rangle\!:\!\nabla U\,\mathrm{d}x\,\mathrm{d}\tau\\
&=\int_0^t\!\!\int_{\mathcal{F}_\tau}
  \big\langle (\lambda'-\sqrt{\lambda_1}U)\otimes(\sqrt{\lambda_1}U-\lambda'),\nu\big\rangle\!:\!\nabla U\,\mathrm{d}x\,\mathrm{d}\tau\\
&\quad+\int_0^t\!\!\int_{\mathcal{F}_\tau}
  \frac{\mathcal{G}-\mathcal{H}\,U}{R}\cdot \big\langle \lambda_1U-\sqrt{\lambda_1}\lambda',\nu\big\rangle\,\mathrm{d}x\,\mathrm{d}\tau
 -\int_0^t\!\!\int_{\mathcal{F}_\tau}
  \gamma R^{\gamma-2}\,\nabla R\cdot \big\langle \lambda_1U-\sqrt{\lambda_1}\lambda',\nu\big\rangle\,\mathrm{d}x\,\mathrm{d}\tau.
\end{aligned}
\end{equation}
Here $\mathcal{G}:=(\mathcal{G}^1, \mathcal{G}^2, \mathcal{G}^3)^\top$ with $\mathcal{G}^l, l=1,2,3$ given in \eqref{eq:transormedStrongSolution}.

Moreover,
\begin{equation}\label{eq:I_2}
\begin{aligned}
\mathcal{I}_2
&=-\int_0^t\!\!\int_{\mathcal{F}_\tau}\langle \lambda_1^\gamma,\nu\rangle\,\nabla\!\cdot U\,\mathrm{d}x\,\mathrm{d}\tau
   +\gamma\int_0^t\!\!\int_{\mathcal{F}_\tau} R^{\gamma-2}\Big\langle \sqrt{\lambda_1}\lambda',\nu\Big\rangle\!\cdot\nabla R\,\mathrm{d}x\,\mathrm{d}\tau\\
&=\int_0^t\!\!\int_{\mathcal{F}_\tau}\big(R^\gamma-\langle \lambda_1^\gamma,\nu\rangle\big)\,\nabla\!\cdot U\,\mathrm{d}x\,\mathrm{d}\tau
  -\int_0^t\!\!\int_{\partial\mathcal{F}_\tau}R^\gamma\,U\cdot n\,\mathrm{d}S\,\mathrm{d}\tau\\
&\quad+\underbrace{\int_0^t\!\!\int_{\mathcal{F}_\tau}\gamma R^{\gamma-2}\,\big\langle RU-\sqrt{\lambda_1}\lambda',\nu\big\rangle\!\cdot\nabla R\,\mathrm{d}x\,\mathrm{d}\tau}_{\mathcal{I}_{21}}.
\end{aligned}
\end{equation}

Since \(\tilde u_B(x)=\tilde V(t)+\tilde\omega(t)\times(x-X_1(t))\), the matrix \(\nabla\tilde u_B\) is skew-symmetric, hence
\begin{equation*}
    u_B\cdot(\nabla\tilde u_B\,u_B)=0, \qquad  (\tilde u_B-u_B)\cdot\big(\nabla\tilde u_B(\tilde u_B-u_B))=0.
\end{equation*}
 Therefore,
\begin{equation}\label{eq:I_3}
\begin{aligned}
\mathcal{I}_3
=&\int_0^t\!\!\int_{\mathcal{B}_\tau}\rho_B(\tilde u_B-u_B)\cdot \partial_\tau \tilde u_B
 +\rho_B u_B\cdot(\nabla\tilde u_B\,\tilde u_B)\,\mathrm{d}x\,\mathrm{d}\tau\\
=&\int_0^t\!\!\int_{\mathcal{B}_\tau}\rho_B\,(\partial_\tau \tilde u_B + u_B\cdot\nabla\tilde u_B)\cdot(\tilde u_B-u_B)\,\mathrm{d}x\,\mathrm{d}\tau\\
=&\int_0^t\!\!\int_{\mathcal{B}_\tau}\rho_B\,(\partial_\tau \tilde u_B + \tilde u_B\cdot\nabla\tilde u_B)\cdot(\tilde u_B-u_B)\,\mathrm{d}x\,\mathrm{d}\tau.
\end{aligned}
\end{equation}

Using the first equation in \eqref{eq:transormedStrongSolution},
\begin{equation}\label{eq:I_02}
\begin{aligned}
\mathcal{I}_{01}+\mathcal{I}_{21}
=&\int_0^t\!\!\int_{\mathcal{F}_\tau}\gamma R^{\gamma-2}(R-\langle\lambda_1,\nu\rangle)\,\mathcal{H}\,+\gamma R^{\gamma-2}\nabla R\cdot \langle\lambda_1U-\sqrt{\lambda_1}\lambda',\nu\rangle\,\mathrm{d}x\,\mathrm{d}\tau
 \\&+\int_0^t\!\!\int_{\mathcal{F}_\tau}\gamma R^{\gamma-1}\,\nabla\!\cdot U\,(\langle\lambda_1,\nu\rangle-R)\,\mathrm{d}x\,\mathrm{d}\tau.
\end{aligned}
\end{equation}

Substituting \eqref{eq:I_0}–\eqref{eq:I_02} into\eqref{eq:JL1}, we get
\begin{equation}\label{eq:RelEnergyfinalstep1}
\begin{aligned}
E_{\mathrm{rel}}(t)+\mathcal{D}(t)
&\le \int_0^t\!\!\int_{\mathcal{F}_\tau}\gamma R^{\gamma-2}(R-\langle\lambda_1,\nu\rangle)\,\mathcal{H}\,\mathrm{d}x\,\mathrm{d}\tau
  +\int_0^t\!\!\int_{\mathcal{F}_\tau}\gamma R^{\gamma-1}\,\nabla\!\cdot U\,(\langle\lambda_1,\nu\rangle-R)\,\mathrm{d}x\,\mathrm{d}\tau\\
&\quad+\int_0^t\!\!\int_{\mathcal{F}_\tau}\big\langle R^\gamma-\lambda_1^\gamma,\nu\big\rangle\,\nabla\!\cdot U\,\mathrm{d}x\,\mathrm{d}\tau
  +\int_0^t\!\!\int_{\mathcal{F}_\tau}\frac{\mathcal{G}-\mathcal{H}\,U}{R}\cdot\big\langle \lambda_1U-\sqrt{\lambda_1}\lambda',\nu\big\rangle\,\mathrm{d}x\,\mathrm{d}\tau\\
&\quad+\int_0^t\!\!\int_{\mathcal{F}_\tau}\big\langle (\lambda'-\sqrt{\lambda_1}U)\otimes(\sqrt{\lambda_1}U-\lambda'),\nu\big\rangle\!:\!\nabla U\,\mathrm{d}x\,\mathrm{d}\tau\\
&\quad+\int_0^t\!\!\int_{\mathcal{B}_\tau}\rho_B\,(\partial_\tau \tilde u_B + \tilde u_B\cdot\nabla\tilde u_B)\cdot(\tilde u_B-u_B)\,\mathrm{d}x\,\mathrm{d}\tau
+\mathcal{O}(1)\int_0^t\mathcal{D}(\tau)\,\mathrm{d}\tau.
\end{aligned}
\end{equation}

Collecting the 2nd, 3rd and 5th terms on the right-hand side of \eqref{eq:RelEnergyfinalstep1}, and using the definition \eqref{eq:defofRE}, we finally obtain
\begin{equation}\label{eq:simplifiedErelEst}
\begin{aligned}
E_{\mathrm{rel}}(t)+\mathcal{D}(t)
&\le \mathcal{O}(1)\int_0^t\big(E_{\mathrm{rel}}(\tau)+\mathcal{D}(\tau)\big)\,\mathrm{d}\tau\\
&\quad+\int_0^t\!\!\int_{\mathcal{B}_\tau}\rho_B\,(\partial_\tau \tilde u_B + \tilde u_B\cdot\nabla\tilde u_B)\cdot(\tilde u_B-u_B)\,\mathrm{d}x\,\mathrm{d}\tau\\
&\quad+\int_0^t\!\!\int_{\mathcal{F}_\tau}\gamma R^{\gamma-2}(R-\langle\lambda_1,\nu\rangle)\,\mathcal{H}\,\mathrm{d}x\,\mathrm{d}\tau
\\&\quad+\int_0^t\!\!\int_{\mathcal{F}_\tau}\frac{\mathcal{G}-\mathcal{H}\,U}{R}\cdot\big\langle \lambda_1U-\sqrt{\lambda_1}\lambda',\nu\big\rangle\,\mathrm{d}x\,\mathrm{d}\tau.
\end{aligned}
\end{equation}

Noting the last two equations in \eqref{eq:transormedStrongSolution} and following the procedure used to handle \cite[term (4.24)]{kreml2020weak}, we conclude that
\begin{equation}\label{eq:lastterm3}
\int_0^t\!\!\int_{\mathcal{B}_\tau}
\rho_B\,(\partial_\tau \tilde{u}_B + \tilde{u}_B\cdot\nabla\tilde{u}_B)\cdot(\tilde{u}_B - u_B)\,\mathrm{d}x\,\mathrm{d}\tau
\;\le\; \mathcal{O}(1)\int_0^t E_{\mathrm{rel}}(\tau)\,\mathrm{d}\tau.
\end{equation}

To estimate the last two terms on the right-hand side of \eqref{eq:simplifiedErelEst}, and following \cite[(4.27)–(4.36)]{kreml2020weak}, we split the dummy variable \(\lambda_1\) into
\[
\mathbb{K}_1:=\bigl\{\lambda_1:\lambda_1\in(\tfrac{R_-}{2},2R_+)\bigr\},\quad
\mathbb{K}_2:=\bigl\{\lambda_1:\lambda_1\in[0,\tfrac{R_-}{2}]\bigr\},\quad
\mathbb{K}_3:=\bigl\{\lambda_1:\lambda_1\in[2R_+,+\infty)\bigr\},
\]
where \(0<R_-:=\inf R\) and \(R_+:=\sup R<+\infty\).
Let the characteristic functions over \(\mathbb{K}_i\) be
\[
\zeta_i(\lambda_1):=\chi_{\mathbb{K}_i}(\lambda_1),\qquad i=1,2,3.
\]

We first record two elementary estimates, see for instance~\cite{FJN}.

\begin{lemma}\label{lem:Dummyvariableestmates}
For the ranges above and every \(\gamma>1\),
\[
(\lambda_1-R)^2\,\zeta_1
\;\le\; \mathcal{O}(1)\Big(\frac{\lambda_1^\gamma}{\gamma-1}-\frac{\gamma R^{\gamma-1}\lambda_1}{\gamma-1}+R^\gamma\Big)\zeta_1,
\]
and for \(k=2,3\),
\[
(1+\lambda_1^\gamma)\,\zeta_k
\;\le\; \mathcal{O}(1)\Big(\frac{\lambda_1^\gamma}{\gamma-1}-\frac{\gamma R^{\gamma-1}\lambda_1}{\gamma-1}+R^\gamma\Big)\zeta_k.
\]
\end{lemma}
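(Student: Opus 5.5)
The argument treats the relative pressure potential as a function of the single variable $\lambda_1$ with $R$ as a fixed parameter. Set
\[
F(\lambda_1):=\frac{\lambda_1^\gamma}{\gamma-1}-\frac{\gamma R^{\gamma-1}\lambda_1}{\gamma-1}+R^\gamma ,\qquad \lambda_1\ge 0,
\]
where $R\in[R_-,R_+]$ with $0<R_-\le R_+<\infty$. The plan is to show that all constants depend only on $\gamma$, $R_-$ and $R_+$, so that $\mathcal{O}(1)$ is uniform in $(t,x)$.

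First we check the basic structure of $F$. We have $F(R)=\frac{R^\gamma-\gamma R^\gamma}{\gamma-1}+R^\gamma=0$. Next, $F'(\lambda_1)=\frac{\gamma}{\gamma-1}(\lambda_1^{\gamma-1}-R^{\gamma-1})$, so $F'(R)=0$. Finally $F''(\lambda_1)=\gamma\lambda_1^{\gamma-2}>0$ for $\lambda_1>0$. Hence $F$ is strictly convex on $[0,\infty)$ with a unique minimum $0$ at $\lambda_1=R$, and $F$ is the Bregman distance of $\lambda_1\mapsto\lambda_1^\gamma/(\gamma-1)$.

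\textbf{On $\mathbb{K}_1$.} Taylor's formula with integral remainder gives
\[
F(\lambda_1)=(\lambda_1-R)^2\int_0^1(1-s)\,\gamma\bigl(R+s(\lambda_1-R)\bigr)^{\gamma-2}\,\mathrm{d}s .
\]
For $\lambda_1\in(R_-/2,2R_+)$ and $R\in[R_-,R_+]$, every intermediate point lies in $[R_-/2,2R_+]$. There $\xi^{\gamma-2}\ge c(\gamma,R_-,R_+)>0$: take $c=(2R_+)^{\gamma-2}$ if $\gamma\ge2$, and $c=(R_-/2)^{\gamma-2}$ otherwise. This yields $F\ge \tfrac{c\gamma}{2}(\lambda_1-R)^2$ on $\mathbb{K}_1$, which is the first inequality.

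\textbf{On $\mathbb{K}_2$.} The set $[0,R_-/2]$ is at distance at least $R_-/2$ from $R$. Since $F$ is convex with minimum at $R$, it is decreasing on $[0,R]$. Therefore $F(\lambda_1)\ge F(R/2)=R^\gamma\,F_1(1/2)$, using the scaling $F(\lambda_1)=R^\gamma F_1(\lambda_1/R)$, where $F_1$ is $F$ with $R=1$. Strict convexity gives $F_1(1/2)>0$, so $F\ge R_-^\gamma F_1(1/2)=:c_2>0$. On the other side, $1+\lambda_1^\gamma\le 1+R_+^\gamma$ on $\mathbb{K}_2$, which gives the bound with constant $(1+R_+^\gamma)/c_2$.

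\textbf{On $\mathbb{K}_3$.} This is the only step requiring care, since $\lambda_1$ is unbounded here. We bound the negative term using $R\le\lambda_1/2$:
\[
\frac{\gamma R^{\gamma-1}\lambda_1}{\gamma-1}\le \frac{\gamma\,2^{1-\gamma}}{\gamma-1}\lambda_1^\gamma .
\]
This gives $F\ge \frac{1-\gamma 2^{1-\gamma}}{\gamma-1}\lambda_1^\gamma$, but the coefficient $1-\gamma2^{1-\gamma}$ need not be positive for $\gamma$ near $1$. So instead we use the scaling and convexity argument.

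Write $s=\lambda_1/R\ge 2$. Then $F_1(s)=\frac{s^\gamma-\gamma s+\gamma-1}{\gamma-1}$. Convexity of $F_1$, together with $F_1(1)=F_1'(1)=0$, implies $F_1(s)/s$ is nondecreasing for $s\ge1$. Hence $F_1(s)\ge \tfrac{F_1(2)}{2}s$ for $s\ge 2$, with $F_1(2)>0$. Moreover $F_1(s)/s^\gamma\to\frac1{\gamma-1}$ as $s\to\infty$. Since $F_1(s)>0$ and is continuous on $[2,\infty)$, this gives $\inf_{s\ge2}F_1(s)/s^\gamma=:c_3>0$.

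It follows that $F\ge c_3 R^\gamma s^\gamma=c_3\lambda_1^\gamma$. Finally, on $\mathbb{K}_3$ we have $\lambda_1\ge 2R_-$, so
\[
1\le (2R_-)^{-\gamma}\lambda_1^\gamma .
\]
Combining, $1+\lambda_1^\gamma\le \bigl(1+(2R_-)^{-\gamma}\bigr)c_3^{-1}F$, which completes the second inequality for $k=3$ and hence the proof.
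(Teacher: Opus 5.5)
Your proof is correct. It is also self-contained, whereas the paper gives no proof at all: it records the lemma as an elementary estimate and points to \cite{FJN}.

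What you supply is the standard argument behind that reference. You recognise the right-hand side as the Bregman distance $F$ of $\lambda_1\mapsto\lambda_1^\gamma/(\gamma-1)$ at $R$. On the essential range $\mathbb{K}_1$ you use the Taylor remainder with $F''=\gamma\lambda_1^{\gamma-2}$ bounded below. On the residual ranges $\mathbb{K}_2,\mathbb{K}_3$ you use:
\begin{itemize}
\item monotonicity of the convex $F$ on $[0,R]$;
\item the scaling $F(\lambda_1)=R^\gamma F_1(\lambda_1/R)$;
\item the limit $F_1(s)/s^\gamma\to 1/(\gamma-1)$.
\end{itemize}
Compared with the bare citation, your version makes explicit that all constants depend only on $\gamma$, $R_-$ and $R_+$. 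That is exactly what the paper needs for a uniform $\mathcal{O}(1)$ in $(t,x)$. The scaling argument on $\mathbb{K}_3$ also neatly avoids the crude bound, whose coefficient $1-\gamma 2^{1-\gamma}$ is in fact nonpositive for every $\gamma\in(1,2]$, not only near $1$.

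Two small corrections.
\begin{itemize}
\item On $\mathbb{K}_1$ your choices of the lower bound for $\xi^{\gamma-2}$ on $[R_-/2,2R_+]$ are swapped. For $\gamma\ge 2$ this function is nondecreasing, so its minimum is $(R_-/2)^{\gamma-2}$. For $\gamma<2$ it is decreasing, so its minimum is $(2R_+)^{\gamma-2}$. Taking $c=\min\{(R_-/2)^{\gamma-2},(2R_+)^{\gamma-2}\}$ covers both cases, and the conclusion is unaffected.
\item On $\mathbb{K}_3$ the monotonicity of $F_1(s)/s$ is never used. Positivity of $F_1$ on $[2,\infty)$, continuity, and the positive limit of $F_1(s)/s^\gamma$ at infinity already give $c_3>0$, so that step can be dropped.
\end{itemize}
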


We split the third term on the right-hand side of \eqref{eq:simplifiedErelEst} according to the above partition:
\begin{equation}\label{eq:I1-4}
    \begin{aligned}
        \Big|\int_{\mathcal{F}_t}\gamma R^{\gamma-2}(R-\langle\lambda_1,\nu\rangle)\mathcal{H}\mathrm{d}x\mathrm{d}\tau\Big|\leq \mathcal{O}(1)\sum_{i=1}^3\underbrace{\int_{\mathcal{F}_t}\big|\langle\qnt{R-\lambda_1}\zeta_i,\nu\rangle\big|\big|\mathcal{H}\big|\mathrm{d}x}_{\mathcal{I}'_i}.
    \end{aligned}
\end{equation}

By Lemma \ref{lem:Dummyvariableestmates}, we bound \(\mathcal{I}'_1\) and \(\mathcal{I}'_2\):
\begin{equation}\label{eq:II1}
    \begin{aligned}
        \mathcal{I}'_1&\leq \int_{\mathcal{F}_t}\langle(R-\lambda_1)^2\zeta_1,\nu\rangle +\mathcal{H}^2 \mathrm{d}x \\&\leq \mathcal{O}(1)\int_{\mathcal{F}_t}\langle\qnt{\frac{\lambda_1^\gamma}{\gamma-1}-\frac{\gamma R^{\gamma-1}\lambda_1}{\gamma-1}+R^\gamma},\nu\rangle\zeta_1\mathrm{d}x+\mathcal{O}(1)\|\mathcal{H}\|^2_{L^\infty(\mathcal{F}_t)}\\&\leq \mathcal{O}(1)E_{rel}(t)+\mathcal{O}(1)\|\mathcal{H}\|^2_{L^\infty(\mathcal{F}_t)},
    \end{aligned}
\end{equation}
and 
\begin{equation}\label{eq:II2}
    \begin{aligned}
        \mathcal{I}'_2&\leq \int_{\mathcal{F}_t}\langle(R-\lambda_1)^2\zeta_2,\nu\rangle +\mathcal{H}^2 \mathrm{d}x \leq \mathcal{O}(1)\int_{\mathcal{F}_t}\langle 1^2\zeta_2,\nu\rangle+\mathcal{H}^2\mathrm{d}x\\&\leq \mathcal{O}(1)\int_{\mathcal{F}_t}\langle\qnt{\frac{\lambda_1^\gamma}{\gamma-1}-\frac{\gamma R^{\gamma-1}\lambda_1}{\gamma-1}+R^\gamma},\nu\rangle\zeta_2\mathrm{d}x+\mathcal{O}(1)\|\mathcal{H}\|^2_{L^\infty(\mathcal{F}_t)}\\&\leq \mathcal{O}(1)E_{rel}(t)+\mathcal{O}(1)\|\mathcal{H}\|^2_{L^\infty(\mathcal{F}_t)}.
    \end{aligned}
\end{equation}
The term $\mathcal{I}'_3$ requires more careful treatment. Indeed, by Lemma \ref{lem:Dummyvariableestmates}, for $\gamma \in \left[1, 2\right]$,
\begin{equation}\label{eq:II3}
    \begin{aligned}
        \mathcal{I}'_3&\leq\mathcal{O}(1)\int_{\mathcal{F}_t}\langle|\lambda_1|\zeta_3,\nu\rangle\big|\mathcal{H}\big|\mathrm{d}x\leq\mathcal{O}(1)\|\langle|\lambda_1|\zeta_3,\nu\rangle\|_{L^{\gamma}(\mathcal{F}_t)}\|\mathcal{H}\|_{L^\infty(\mathcal{F}_t)}\\&\leq \mathcal{O}(1) \qnt{\int_{\mathcal{F}_t}\langle\lambda^\gamma_1\zeta_3,\nu\rangle\mathrm{d}x }^{\frac{2}{\gamma}}+\mathcal{O}(1)\|\mathcal{H}\|^2_{L^\infty(\mathcal{F}_t)}\\&\leq \mathcal{O}(1)E_{rel}^{\frac{2}{\gamma}}+\mathcal{O}(1)\|\mathcal{H}\|^2_{L^\infty(\mathcal{F}_t)};
    \end{aligned}
\end{equation}
for $\gamma\in (2,+\infty)$, 
\begin{equation}\label{eq:II4}
    \begin{aligned}
       \mathcal{I}'_3&\leq\mathcal{O}(1)\int_{\mathcal{F}_t}\langle|\lambda_1|\zeta_3,\nu\rangle\big|\mathcal{H}\big|\mathrm{d}x\leq \mathcal{O}(1)\int_{\mathcal{F}_t}\langle|\lambda_1|^{\frac{\gamma}{2}}\zeta_3,\nu\rangle\big|\mathcal{H}\big|\mathrm{d}x\\&\leq \mathcal{O}(1)\int_{\mathcal{F}_t}\langle|\lambda_1|^{\gamma}\zeta_3,\nu\rangle\mathrm{d}x+\mathcal{O}(1)\|\mathcal{H}\|^2_{L^\infty(\mathcal{F}_t)}\\&\leq \mathcal{O}(1)E_{rel}(t)+\mathcal{O}(1)\|\mathcal{H}\|^2_{L^\infty(\mathcal{F}_t)}.
    \end{aligned}
\end{equation}

By the specific formula of $\mathcal{H}$ and Lemma \ref{PropertiesonTransform},
\begin{equation}
    \|\mathcal{H}\|^2_{L^{\infty}(\mathcal{F}_t)}\leq\mathcal{O}(1)\|\partial_t\tilde{Z}_i(t,x)\|^2_{W^{1,\infty}(\mathcal{F}_t)}\leq\mathcal{O}(1)(\big|V-\tilde{V}\big|^2(t)+\big|\omega-\tilde{\omega}\big|^2(t)),
\end{equation}
which together with the definition of $E_{rel}(t)$ and the fact that 
\begin{equation}
   \begin{aligned}
       & \int_{\mathcal{B}_t}\rho_B|u_B-\tilde{u}_B|^2\mathrm{d}x\\&\ge\inf_{\mathcal{B}_t}\rho_B\int_{\mathcal{B}_t}\qnt{|V(t)-\tilde{V}(t)|^2+|(\omega(t)-\tilde{\omega}(t))\times(x-X_1(t))|^2}\mathrm{d}x\\&\ge c \qnt{|V(t)-\tilde{V}(t)|^2+|\omega(t)-\tilde{\omega}(t)|^2},
   \end{aligned}
\end{equation}
for some positive constants $c$, leads to
\begin{equation}\label{eq:H0}
    \|\mathcal{H}\|_{L^\infty(\mathcal{F}_t)}^2\leq \mathcal{O}(1)E_{rel}(t). 
\end{equation}
Combining \eqref{eq:II1}–\eqref{eq:H0} with \eqref{eq:I1-4}, we conclude that
\begin{equation}\label{eq:Finalstep1}
    \begin{aligned}
        &\big|\int_{\mathcal{F}_t}\gamma R^{\gamma-2}(R-\langle\lambda_1,\nu\rangle)\mathcal{H}\mathrm{d}x\mathrm{d}\tau\big|\leq \mathcal{O}(1)E_{rel}(t).
    \end{aligned}
\end{equation}

Next, we estimate the last term on the right-hand side of \eqref{eq:simplifiedErelEst}. First,
\begin{equation}\label{eq:Finalstep2}
   \begin{aligned}
       \big|\int_{\mathcal{F}_t} \frac{\mathcal{G}-\mathcal{H} U}{R}\cdot\langle\lambda_1U-\sqrt{\lambda_1}\lambda',\nu\rangle\mathrm{d}x\big|
       &=\big|\int_{\mathcal{F}_t} \frac{\mathcal{G}-\mathcal{H} U}{R}\cdot\langle(\lambda_1U-\sqrt{\lambda_1}\lambda')(\sum_{i=1}^3\zeta_i),\nu\rangle\mathrm{d}x\big|\\
       &\leq\sum_{i=1}^3\underbrace{\int_{\mathcal{F}_t}\big|\frac{\mathcal{G}-\mathcal{H} U}{R}\big|\langle\big|\lambda_1U-\sqrt{\lambda_1}\lambda'\big|\zeta_i,\nu\rangle\mathrm{d}x}_{\mathcal{I}^{''}_i}.
   \end{aligned} 
\end{equation}
By  Hölder's inequality and Jensen's inequality,
\begin{equation}\label{eq:III1}
    \begin{aligned}
        \mathcal{I}^{''}_1+\mathcal{I}^{''}_2&\leq \mathcal{O}(1)\|\frac{\mathcal{G}-\mathcal{H} U}{R}\|_{L^2(\mathcal{F}_t)}\qnt{\int_{\mathcal{F}_t}\langle\big|\sqrt{\lambda_1}U-\lambda'\big|^2,\nu\rangle\mathrm{d}x}^{\frac{1}{2}}\\&\leq \mathcal{O}(1)\|\frac{\mathcal{G}-\mathcal{H} U}{R}\|_{L^\infty(\mathcal{F}_t)}E_{rel}^{\frac{1}{2}}(t),
    \end{aligned}
\end{equation}
and 
\begin{equation}\label{eq:III2}
    \begin{aligned}
        \mathcal{I}^{''}_3&\leq \mathcal{O}(1)\|\frac{\mathcal{G}-\mathcal{H} U}{R}\|_{L^{\frac{2\gamma}{\gamma-1}}(\mathcal{F}_t)}\|\langle\big|\sqrt{\lambda_1}(\sqrt{\lambda_1}U-\lambda')\big|\zeta_3,\nu\rangle\|_{L^{\frac{2\gamma}{\gamma+1}}(\mathcal{F}_t)}\\&\leq \mathcal{O}(1)\|\frac{\mathcal{G}-\mathcal{H} U}{R}\|_{L^{\infty}(\mathcal{F}_t)}\|\langle\lambda_1^\gamma\zeta_3,\nu\rangle^{\frac{1}{2\gamma}}\langle\big|\sqrt{\lambda_1}U-\lambda'\big|^2,\nu\rangle^{\frac{1}{2}}\|_{L^{\frac{2\gamma}{\gamma+1}}(\mathcal{F}_t)}\\&\leq \mathcal{O}(1)\|\frac{\mathcal{G}-\mathcal{H} U}{R}\|_{L^{\infty}(\mathcal{F}_t)} \qnt{\int_{\mathcal{F}_t}\langle\lambda_1^\gamma\zeta_3,\nu\rangle\mathrm{d}x}^{\frac{1}{2\gamma}}\qnt{\int_{\mathcal{F}_t}\langle\big|\sqrt{\lambda_1}U-\lambda'\big|^2,\nu\rangle\mathrm{d}x}^{\frac{1}{2}}\\&\leq \mathcal{O}(1)\|\frac{\mathcal{G}-\mathcal{H} U}{R}\|_{L^{\infty}(\mathcal{F}_t)} E^{\frac{\gamma+1}{2\gamma}}_{rel}(t)
    \end{aligned}
\end{equation}
Thus, plugging \eqref{eq:III1} and \eqref{eq:III2} into \eqref{eq:Finalstep2}, we arrive at 
\begin{equation}\label{eq:Lastterm1}
    \begin{aligned}
        \big|\int_{\mathcal{F}_t} \frac{\mathcal{G}-\mathcal{H} U}{R}\cdot\langle\lambda_1U-\sqrt{\lambda_1}\lambda',\nu\rangle\mathrm{d}x\big|&\leq \mathcal{O}(1)\|\frac{\mathcal{G}-\mathcal{H} U}{R}\|_{L^{\infty}(\mathcal{F}_t)}\qnt{E^{\frac{\gamma+1}{2\gamma}}_{rel}(t)+E^{\frac{1}{2}}_{rel}(t)}\\&\leq \mathcal{O}(1) \|\frac{\mathcal{G}-\mathcal{H} U}{R}\|_{L^{\infty}(\mathcal{F}_t)}^2+\mathcal{O}(1)E_{rel}(t),
    \end{aligned}
\end{equation}
where Cauchy–Schwarz inequality and \(E_{\mathrm{rel}}\in L^\infty(0,T)\) are used in the last step.

According to the specific formulae for $\mathcal{H}$ and $\mathcal{G}$ in \eqref{eq:transormedStrongSolution}, noting the lower positive bound of the density $R$, applying Lemma \ref{PropertiesonTransform}, it holds that 
\begin{equation}
    \begin{aligned}
        \|\frac{\mathcal{G}-\mathcal{H}U}{R}\|^2_{L^{\infty}(\mathcal{F}_t)}&\leq \mathcal{O}(1)\qnt{|V-\tilde{V}|^2+|\omega-\tilde{\omega}|^2+\|V-\tilde{V}\|^2_{L^2(0,t)}+\|\omega-\tilde{\omega}\|^2_{L^2(0,t)}}\\&\leq \mathcal{O}(1)\qnt{E_{rel}(t)+\int_0^tE_{rel}(\tau)\mathrm{d}\tau}
    \end{aligned}
\end{equation}
which, together with \eqref{eq:Lastterm1}, gives that 
\begin{equation}\label{eq:Lastterm2}
    \begin{aligned}
        \big|\int_{\mathcal{F}_t} \frac{\mathcal{G}-\mathcal{H} U}{R}\cdot\langle\lambda_1U-\sqrt{\lambda_1}\lambda',\nu\rangle\mathrm{d}x\big|\leq \mathcal{O}(1)\qnt{E_{rel}(t)+\int_0^tE_{rel}(\tau)\mathrm{d}\tau}.
    \end{aligned}
\end{equation}

Finally, plugging \eqref{eq:Lastterm2}, \eqref{eq:Finalstep1} and \eqref{eq:lastterm3} into \eqref{eq:simplifiedErelEst}, we obtain
\begin{equation}
    \begin{aligned}
        E_{rel}(t)+\mathcal{D}(t)&\leq\mathcal{O}(1)\int_0^t E_{rel}(s)+\mathcal{D}(s)\mathrm{d}s+\mathcal{O}(1)\int_0^t\int_0^sE_{rel}(\tau)+\mathcal{D}(\tau)\mathrm{d}\tau \mathrm{d}s\\&\leq\mathcal{O}(1)\qnt{1+t}\int_0^t\qnt{E_{rel}(s)+\mathcal{D}(s)}\mathrm{d}s.
    \end{aligned} 
\end{equation}
Therefore, applying Gronwall's inequality, together with $E_{rel}(0)+\mathcal{D}(0)=0,$ we conclude that 
\begin{equation}\label{eq:FianlRealtiveest}
    E_{rel}(t)=\mathcal{D}(t)=0,\quad t\in (0,T),
\end{equation}
which together with  \eqref{eq:FianlRealtiveest} and Lemma \ref{PropertiesonTransform} yields that the maps $\tilde{Z}(t,\cdot)$ are the identity, i.e., 
\begin{equation}
    \tilde{Z}_i(t,x)=x.
\end{equation}
Together with \eqref{eq:TransformFluidp} and \eqref{eq:TransformedVelocity}, for $t\in(0,T)$, this leads to 
\begin{equation}\label{eq:JJ1}
    \begin{aligned}
       &\mathcal{B}_{2,t}=\mathcal{B}_{t},  \\
       &(\rho_{F2},u_{{F2}})\big|_{\mathcal{F}_t}=(R,U)\big|_{\mathcal{F}_t},\\
       &(\rho_{B2},u_{B2})\big|_{\mathcal{B}_t}=(\tilde{\rho}_B,\tilde{u}_B)\big|_{\mathcal{B}_t}.
    \end{aligned}
\end{equation}
From \eqref{eq:JJ1} and \eqref{eq:FianlRealtiveest} we get \(u_{B1}=u_{B2}\), and using the rigid mass equation (third line of \eqref{eq:model}) we infer
\begin{equation}\label{eq:JJ2}
\rho_{B1}=\rho_{B2}.
\end{equation}
Combining \eqref{eq:FianlRealtiveest}, \eqref{eq:JJ1}, and \eqref{eq:JJ2}, we conclude
\begin{equation}
    \begin{aligned}
        &\mathcal{B}_{1,t}=\mathcal{B}_{2,t}, t\in (0,T),\\
        &\nu=\chi_{\mathcal{F}_{1,t}}\delta_{(\rho_{F2},\sqrt{\rho_{F2}}u_{F2})}+\chi_{\mathcal{B}_{2,t}}\delta_{(\rho_{B2},\sqrt{\rho_{B2}}u_{B2})},\\
        &\mathcal{D}(t)=0,\quad t\in(0,T).
    \end{aligned}
\end{equation}
This completes the proof of Theorem~\ref{thm:Weak-strongUniqueness}.

 \medskip
 
 \section*{Acknowledgments}
 Qianfeng Li was partially supported by the Sino-German (CSC-DAAD) Postdoc Scholarship Program, 2023 (No.~57678375). Emil Wiedemann acknowledges support from DFG Priority Programme 2410 CoScaRa, project number 525716336.

 The authors would like to thank \v{S}\'arka Ne\v{c}asov\'a for valuable discussions on fluid-structure interaction.

 \section*{Declarations}
\noindent\textbf{Conflict of interest} 
On behalf of all authors, the corresponding author states that there is no conflict of interest.

\noindent\textbf{Data Availability} The paper does not use any data set.

\medskip
\appendix
\section{Reformulation of the solid's momentum equation}\label{App:WeakformofSolid}

\begin{lemma}[Reynolds Transport Theorem]\label{lem:reynolds}
Let $\mathcal{B}_t\subset\mathbb R^3$ be a smoothly time-dependent control volume with 
boundary $\partial\mathcal{B}_t$ moving with velocity field $v_B(x,t)$.
Let $\phi(x,t)$ be a scalar (or vector) field sufficiently smooth on $\mathcal{B}_t$.
Then for any $t$,
\begin{equation}\label{eq:reynolds}
\frac{\mathrm{d}}{\mathrm{d}t}\int_{\mathcal{B}_t}\phi(x,t)\,\mathrm{d}x
=
\int_{\mathcal{B}_t}\partial_t\phi(x,t)\,\mathrm{d}x
+\int_{\partial\mathcal{B}_t}\phi(x,t)\,v_B(x,t)\cdot n(x,t)\,\mathrm{d}S,
\end{equation}
where $n(x,t)$ is the outward unit normal on $\partial\mathcal{B}_t$.
Equivalently, using the divergence theorem,
\begin{equation}\label{eq:reynolds-div}
\frac{\mathrm{d}}{\mathrm{d}t}\int_{\mathcal{B}_t}\phi\,\mathrm{d}x
=
\int_{\mathcal{B}_t}\Big(\partial_t\phi+\operatorname{div}(\phi\,v_B)\Big)\,\mathrm{d}x.
\end{equation}
\end{lemma}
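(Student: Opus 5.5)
The plan is to reduce the identity to the one-dimensional Leibniz rule by pulling the integral back to a fixed reference domain. Let $Y(t,\cdot)$ be the flow generated by $v_B$, i.e. $\partial_t Y(t,y)=v_B(Y(t,y),t)$ with $Y(0,y)=y$, so that $\mathcal{B}_t=Y(t,\mathcal{B}_0)$. Write $J(t,y):=\det\nabla_y Y(t,y)$. Then
\[
\int_{\mathcal{B}_t}\phi(x,t)\,\mathrm{d}x=\int_{\mathcal{B}_0}\phi(Y(t,y),t)\,J(t,y)\,\mathrm{d}y .
\]
Since the domain of integration is now fixed and the integrand is $C^1$ in $t$ (smoothness of $\phi$ and $v_B$), we may differentiate under the integral sign.

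The key computation is Euler's expansion formula $\partial_t J=(\operatorname{div} v_B)(Y,t)\,J$. It follows from Jacobi's formula $\partial_t\det A=\det A\,\operatorname{tr}(A^{-1}\partial_tA)$, the same identity already used in the paper for the change of coordinates, combined with $\partial_t\nabla_yY=\nabla_x v_B(Y,t)\,\nabla_yY$. With the chain rule $\frac{\mathrm{d}}{\mathrm{d}t}\phi(Y,t)=\partial_t\phi+v_B\cdot\nabla\phi$ this gives
\[
\frac{\mathrm{d}}{\mathrm{d}t}\int_{\mathcal{B}_t}\phi\,\mathrm{d}x=\int_{\mathcal{B}_0}\big(\partial_t\phi+v_B\cdot\nabla\phi+\phi\operatorname{div}v_B\big)(Y,t)\,J\,\mathrm{d}y .
\]
Changing variables back yields exactly \eqref{eq:reynolds-div}, since $v_B\cdot\nabla\phi+\phi\operatorname{div}v_B=\operatorname{div}(\phi v_B)$. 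Applying the divergence theorem on $\mathcal{B}_t$ to $\phi v_B$, with $n$ the outward normal, turns the divergence term into the boundary flux and gives \eqref{eq:reynolds}. For vector-valued $\phi$ the argument is applied componentwise.

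The main obstacle is only technical. If $v_B$ is given just on $\partial\mathcal{B}_t$, or only on $\mathcal{B}_t$, we first extend it to a smooth field near $\overline{\mathcal{B}_t}$ whose normal component on $\partial\mathcal{B}_t$ is unchanged. Only that normal component enters \eqref{eq:reynolds}, so the choice of extension is irrelevant. For the rigid bodies in this paper the extension is explicit, $v_B=V+\omega\times(x-X)$, which is divergence-free, so $J\equiv1$. The same computation applies verbatim to the fluid domain $\mathcal{F}_t$; there the boundary velocity is the body velocity on $\partial\mathcal{B}_t$ and zero on $\partial\Omega$, which is how the lemma is used in \eqref{eq:I_0}.
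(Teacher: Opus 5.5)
The paper gives no proof of this lemma. It quotes it as the classical Reynolds transport theorem and uses it as a black box in Steps~1--2 of the proof of Theorem~\ref{thm:ApendixA} and in \eqref{eq:I_0}. Your argument is the standard Lagrangian proof, and it is correct.

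Pulling back by the flow fixes the domain. Jacobi's formula with $\partial_t\nabla_yY=\nabla_xv_B(Y,t)\nabla_yY$ gives $\partial_tJ=(\operatorname{div}v_B)(Y,t)J$. Hence $J=\exp(\int_0^t\operatorname{div}v_B(Y(s,y),s)\,\mathrm{d}s)>0$, and the change of variables back to $\mathcal{B}_t$ needs no absolute value. The divergence theorem then converts \eqref{eq:reynolds-div} into \eqref{eq:reynolds}.

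Compared with the bare citation, your route makes the mechanism explicit in the paper's setting. The relevant flows are volume preserving: the rigid motion on $\mathcal{B}_t$, and on $\mathcal{F}_t$ the flow $Z_1$ of the truncated field $\Lambda_1$, which equals $u_{B1}$ near the body and vanishes near $\partial\Omega$. Your route also shows that, in time-integrated form, one only needs $t\mapsto\phi(Y(t,y),t)J(t,y)$ to be absolutely continuous. That covers \eqref{eq:I_0}, where $\mathcal{F}_\tau$ is the fluid domain of the measure-valued solution and $V_1,\omega_1$ are merely bounded in time. The smoothly-time-dependent statement does not literally include that case.

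One step deserves one more line. Your remark that only $v_B\cdot n$ enters the final formula explains why the result does not depend on the extension. It does not show that the flow of an arbitrary extension carries $\mathcal{B}_0$ onto $\mathcal{B}_t$, which your first step uses. Here is how to check it:
\begin{itemize}
\item Take a defining function $\Phi$ with $\mathcal{B}_t=\{\Phi(t,\cdot)<0\}$ and $\nabla\Phi\neq0$ on $\partial\mathcal{B}_t$.
\item Matching normal velocities means $\partial_t\Phi+v_B\cdot\nabla\Phi=0$ on $\{\Phi=0\}$, hence $|\partial_t\Phi+v_B\cdot\nabla\Phi|\le C|\Phi|$ on compacts.
\item Gronwall applied to $g(t)=\Phi(t,Y(t,y))$ shows that $g$ never changes sign.
\item Bijectivity of $Y(t,\cdot)$ then gives $Y(t,\mathcal{B}_0)=\mathcal{B}_t$.
\end{itemize}
In the paper's applications this is automatic, since the extensions are explicit.
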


\begin{theorem}\label{thm:ApendixA}[From Newton--Euler to the distributional momentum equation with a surface delta]

In the inviscid compressible fluid-body system, suppose the Newton--Euler balances hold for all $t>0$:
\begin{align}
M\,{V}'(t) &= \int_{\partial\mathcal{B}_t}  p_Fn \, \mathrm{d}S, \label{NE-lin}\\
\frac{d}{dt}\big(\mathcal J(t)\,\omega(t)\big)
&= \int_{\partial\mathcal{B}_t} r \times p_Fn \, \mathrm{d}S, \label{NE-ang}
\end{align}
Here and in the sequel $r=x-X(t)$ and $X(t)$ is the mass center. Then the following \emph{distributional} momentum equation for the rigid phase holds on $\mathbb R^3\times(0,T)$:
\begin{equation}
\partial_t\big(\chi_{\mathcal{B}_t}\rho_B\, v\big)+\nabla\!\cdot\!\big(\chi_{\mathcal{B}_t}\rho_B\, v\otimes v\big)
= \big( p_Fn\big)\,\delta_{\partial\mathcal{B}_t}
\label{PDE-delta}
\end{equation}
for test functions in $C_c^\infty((0,T);\mathcal{R}(\Omega))$ where  
\begin{equation}
    \mathcal{R}(\Omega):=\{\xi:\Omega\to\mathbb{R}^3 \big| \xi=\alpha+\eta\times(\cdot-X), \alpha,\eta,X\in \mathbb{R}^3 \},
\end{equation}
$\chi_{\mathcal{B}_t}$ is the indicator of $\mathcal B_t$, and $\delta_{\partial\mathcal{B}_t}$ is the surface delta distribution
characterized by $\int_{\mathbb R^3}\phi\,\delta_{\partial\mathcal{B}_t}\,\mathrm{d}x=\int_{\partial\mathcal{B}_t}\phi\,\mathrm{d}S$.
\end{theorem}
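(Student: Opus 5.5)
We test \eqref{PDE-delta} against an arbitrary $\Phi\in C_c^\infty((0,T);\mathcal{R}(\Omega))$, written as $\Phi(t,x)=\alpha(t)+\eta(t)\times(x-X(t))$, with $X(t)$ the mass center of $\mathcal{B}_t$ (a rigid field with a different base point is reduced to this form by redefining $\alpha$). The distributional identity to be shown is
\begin{equation*}
-\int_0^T\!\!\int_{\mathcal{B}_t}\rho_B v\cdot\big(\partial_t\Phi+(v\cdot\nabla)\Phi\big)\,\mathrm{d}x\,\mathrm{d}t=\int_0^T\!\!\int_{\partial\mathcal{B}_t}p_F\,n\cdot\Phi\,\mathrm{d}S\,\mathrm{d}t,
\end{equation*}
with $v=u_B=V+\omega\times r$. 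The plan is to evaluate both sides in terms of $(V,\omega,\alpha,\eta)$ and compare with \eqref{NE-lin}--\eqref{NE-ang}.

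\textbf{Right-hand side.} This is immediate: $\int_{\partial\mathcal{B}_t}p_Fn\cdot(\alpha+\eta\times r)\,\mathrm{d}S=\alpha\cdot MV'+\eta\cdot(\mathcal{J}\omega)'$, by \eqref{NE-lin}, \eqref{NE-ang} and the identity $n\cdot(\eta\times r)=\eta\cdot(r\times n)$.

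\textbf{Left-hand side.} We use the Reynolds transport theorem (Lemma~\ref{lem:reynolds}) with boundary velocity $v_B=u_B$, together with the transport equation $\partial_t\rho_B+u_B\cdot\nabla\rho_B=0$ and $\operatorname{div}u_B=0$. These give, for any smooth $g$,
\begin{equation*}
\frac{\mathrm{d}}{\mathrm{d}t}\int_{\mathcal{B}_t}\rho_B g\,\mathrm{d}x=\int_{\mathcal{B}_t}\rho_B(\partial_t g+u_B\cdot\nabla g)\,\mathrm{d}x.
\end{equation*}
Apply this with $g=v\cdot\Phi$. By compact support in time, $\int_0^T\frac{\mathrm{d}}{\mathrm{d}t}\int\rho_B v\cdot\Phi=0$. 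The left-hand side of the identity therefore equals
\begin{equation*}
\int_0^T\!\!\int_{\mathcal{B}_t}\rho_B\,\Phi\cdot(\partial_t v+v\cdot\nabla v)\,\mathrm{d}x\,\mathrm{d}t,
\end{equation*}
i.e.\ $\int\rho_B\Phi\cdot\dot v$ with the material acceleration $\dot v=V'+\omega'\times r+\omega\times(\omega\times r)$. Here we use $\dot r=\omega\times r$, which follows from $X'=V$.

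\textbf{Moments and comparison.} Next we compute the moments of this expression, using $\int_{\mathcal{B}_t}\rho_B r\,\mathrm{d}x=0$ (definition of the center of mass) and $\int\rho_B=M$. The $\alpha$-part gives $\alpha\cdot MV'$. The $\eta$-part gives
\begin{equation*}
\eta\cdot\int_{\mathcal{B}_t}\rho_B\, r\times\big(\omega'\times r+\omega\times(\omega\times r)\big)\,\mathrm{d}x=\eta\cdot\big(\mathcal{J}\omega'+\omega\times\mathcal{J}\omega\big).
\end{equation*}
The time derivative of the inertia tensor, obtained by transporting $\mathcal{J}$ with $\dot r=\omega\times r$, is $\mathcal{J}'=[\omega]_\times\mathcal{J}-\mathcal{J}[\omega]_\times$. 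Hence $(\mathcal{J}\omega)'=\mathcal{J}\omega'+\omega\times\mathcal{J}\omega$. So the left-hand side equals $\int_0^T\alpha\cdot MV'+\eta\cdot(\mathcal{J}\omega)'\,\mathrm{d}t$, which matches the right-hand side, and \eqref{PDE-delta} holds for all rigid test fields.

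\textbf{Main obstacle.} The step we expect to need the most care is the rigorous use of Reynolds' theorem with the transported $\rho_B$, which may be merely $L^\infty$. To handle it, we will work in body coordinates $x=X(t)+\mathbb{O}_ty$, where $\rho_B(t,x)=\rho_B^0(y)$ by \eqref{eq:body-domain}. In these coordinates all integrals become integrals over the fixed set $\mathcal{B}_0$ with smooth-in-time integrands, so differentiation under the integral sign is justified directly.
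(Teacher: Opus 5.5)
Your proposal is correct and follows essentially the same route as the paper: you use Reynolds' transport theorem with the rigid transport equation to reduce the left-hand side to $\int_0^T\int_{\mathcal{B}_t}\rho_B\,\Phi\cdot(\partial_t v+v\cdot\nabla v)$, then pair the rigid acceleration with $\alpha+\eta\times r$ to obtain $MV'\cdot\alpha+(\mathcal{J}\omega)'\cdot\eta$, and conclude with the Newton--Euler balances. The differences are minor: you derive $(\mathcal{J}\omega)'=\mathcal{J}\omega'+\omega\times\mathcal{J}\omega$ from $\mathcal{J}'=[\omega]_\times\mathcal{J}-\mathcal{J}[\omega]_\times$ rather than by differentiating the angular momentum, and your body-coordinate argument for merely $L^\infty$ densities makes rigorous a step the paper simply assumes under a smoothness hypothesis.
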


\begin{proof}
We show that \eqref{PDE-delta} holds in the sense of distributions by verifying its space-time weak
form. Let $w\in C_c^\infty([0,T];\mathcal{R}(\Omega))$ be an arbitrary test function with
$w(\cdot,T)=0$. We need to prove
\begin{equation}
-\!\!\int_0^T\!\!\int_{\mathbb R^3} \chi_{\mathcal{B}_t}\rho_B\,v\cdot\partial_tw\,\mathrm{d}x\,\mathrm{d}t
-\!\!\int_0^T\!\!\int_{\mathbb R^3} \chi_{\mathcal{B}_t}\rho_B\,v\otimes v:\nabla w\,\mathrm{d}x\,\mathrm{d}t
=\int_0^T\!\!\int_{\partial\mathcal{B}_t} p_Fn\cdot w\,\mathrm{d}S\,\mathrm{d}t.
\label{weak}
\end{equation}
Because $\chi_{\mathcal{B}_t}$ localizes the integrals to $\mathcal B_t$, the left-hand side equals
\begin{equation}
-\!\!\int_0^T\!\!\int_{\mathcal B_t} \rho_B\,v\cdot\partial_t w\,\mathrm{d}x\,\mathrm{d}t
-\!\!\int_0^T\!\!\int_{\mathcal B_t} \rho_B\, v\otimes v:\nabla w\,\mathrm{d}x\,\mathrm{d}t.
\label{LHS}
\end{equation}

\noindent\textbf{Step 1: A kinematic transport identity.}
On the moving domain $\mathcal B_t$ with boundary velocity $v$, Reynolds' transport theorem
and the product rule yield, for every smooth $w$,
\begin{equation}
\frac{\mathrm{d}}{\mathrm{d}t}\int_{\mathcal B_t}\rho_B\,v\cdot w\,\mathrm{d}x
=\int_{\mathcal B_t}\rho_B\, v\cdot\partial_t w\,\mathrm{d}x
+\int_{\mathcal B_t}\rho_B\, v\otimes v:\nabla w\,\mathrm{d}x
+\int_{\mathcal B_t}\rho_B\, a\cdot w\,\mathrm{d}x,
\label{K0}
\end{equation}
where $ a:=\partial_t v+(v\!\cdot\!\nabla) v$ is the Eulerian acceleration.
Identity \eqref{K0} is purely kinematic (it uses mass conservation in the rigid body:
$\partial_t\rho_B+\operatorname{div}(\rho_B v)=0$).

\noindent\textbf{Step 2: Pairing the acceleration with the test function.}
For an arbitrary $w\in C_c^\infty([0,T];\mathcal{R}(\Omega))$, there exists $\alpha(t), \eta(t)$ such that 
\begin{equation}\label{proj}
   w(x,t)=\alpha(t)+\eta(t)\times r,\quad r=x-X(t). 
\end{equation}

Then the following pairing identity holds:
\begin{equation}
\int_{\mathcal B_t}\rho_B\,a\cdot w\,\mathrm{d}x
= M\,{V'}\cdot\alpha + \frac{\mathrm{d}}{\mathrm{d}t}\big(\mathcal J\omega\big)\cdot\eta.
\label{pair}
\end{equation}
Indeed, using the rigid kinematics
$a={V'}+{\omega'}\times r+\omega\times(\omega\times r)$,
the first term integrates to $M{V'}\cdot \alpha$ by \eqref{proj}, and the remaining
terms give, by the scalar triple-product identity and the identity
\begin{equation}
\frac{\mathrm{d}}{\mathrm{d}t}\big(\mathcal J\omega\big)
=\int_{\mathcal B_t}\rho_B\,\big(r\times a\big)\,\mathrm{d}x,
\label{Ldot}
\end{equation}
the second contribution in \eqref{pair}. Equality \eqref{Ldot} is obtained by differentiating the
angular momentum $\int_{\mathcal B_t}\rho_B\,( r\times v)\,\mathrm{d}x=\mathcal J\omega$
with Reynolds' theorem and using $\int_{\mathcal B_t}\rho_B\,(V\times v)\,\mathrm{d}x
= 0$.

\noindent\textbf{Step 3: Insert \eqref{pair} into \eqref{K0} and integrate in time.}
Combining \eqref{K0} and \eqref{pair} and rearranging gives
\begin{equation}
-\!\!\int_{\mathcal B_t}\rho_B\, v\cdot\partial_t w\,\mathrm{d}x
-\!\!\int_{\mathcal B_t}\rho_B\, v\otimes v:\nabla w\,\mathrm{d}x
= M\,{V'}\cdot \alpha + \frac{\mathrm{d}}{\mathrm{d}t}\big(\mathcal J\omega\big)\cdot\eta
-\frac{\mathrm{d}}{\mathrm{d}t}\int_{\mathcal B_t}\rho_B\,v\cdot w\,\mathrm{d}x.
\label{K}
\end{equation}
Integrating \eqref{K} over $(0,T)$ and using $w(\cdot,0)=w(\cdot,T) =0$ yields
%\begin{align}
%&-\!\!\int_0^T\!\!\int_{\mathcal B_t}\rho_B\, v\cdot\partial_t w\,\mathrm{d}x\,\mathrm{d}t
%-\!\!\int_0^T\!\!\int_{\mathcal B_t}\rho_B\, v\otimes v:\nabla w\,\mathrm{d}x\,\mathrm{d}t \nonumber\\
%&\qquad= \int_0^T \Big(M\,{V'}\cdot \alpha
%+\frac{\mathrm{d}}{\mathrm{d}t}\big(\mathcal J\omega\big)\cdot\eta\Big)\,\mathrm{d}t
%+\int_{\mathcal B_0}\rho_B\, v(\cdot,0)\cdot w(\cdot,0)\,\mathrm{d}x.
%\label{pre-weak}
%\end{align}
%The initial term can be absorbed in the usual way by requiring $w(\cdot,0)= 0$ (or by
%recording initial data separately). With $w(\cdot,0)= 0$, we obtain
\begin{equation}
-\!\!\int_0^T\!\!\int_{\mathcal B_t}\rho_B\,v\cdot\partial_tw\,\mathrm{d}x\,\mathrm{d}t
-\!\!\int_0^T\!\!\int_{\mathcal B_t}\rho_B\,v\otimes v:\nabla w\,\mathrm{d}x\,\mathrm{d}t
= \int_0^T \Big(M\,{V'}\cdot \alpha
+\frac{\mathrm{d}}{\mathrm{d}t}\big(\mathcal J\omega\big)\cdot\eta\Big)\,\mathrm{d}t.
\label{weak-left}
\end{equation}

\noindent\textbf{Step 4: Invoke Newton--Euler and identify the interface work.}
By the Newton--Euler relations \eqref{NE-lin}--\eqref{NE-ang},
\[
M\,{V'}\cdot \alpha
+\frac{\mathrm{d}}{\mathrm{d}t}\big(\mathcal J\omega\big)\cdot\eta
=\int_{\partial\mathcal{B}_t}p_Fn\cdot
\big(\alpha+\eta\times r\big)\,\mathrm{d}S=\int_{\partial\mathcal{B}_t} p_Fn\cdot w\,\mathrm{d}S.
\]
Therefore,
\[
\int_0^T \Big(M\,{V'}\cdot \alpha
+\frac{\mathrm{d}}{\mathrm{d}t}\big(\mathcal J\omega\big)\cdot\eta\Big)\,\mathrm{d}t
=\int_0^T\!\!\int_{\partial\mathcal{B}_t} p_Fn\cdot w\,\mathrm{d}S\,\mathrm{d}t.
\]
Substituting this into \eqref{weak-left} gives \eqref{weak}. Thus the distribution
identity \eqref{PDE-delta} follows by the definition of the surface delta:
\[
\int_0^T\!\!\int_{\mathbb R^3}\big(p_Fn\,\delta_{\partial\mathcal{B}_t}\big)\cdot w\,\mathrm{d}x\,\mathrm{d}t
=\int_0^T\!\!\int_{\partial\mathcal{B}_t} p_Fn\cdot w\,\mathrm{d}S\,\mathrm{d}t.
\]
This completes the proof.
\end{proof}

\section{Proof of Lemma \ref{lem:ConvergeBou1}}
\label{app:proof-lem-ConvergeBou1}

\begin{proof}[Proof of Lemma \ref{lem:ConvergeBou1}] 

The convergence \eqref{eq:ConvergeBou} follows directly from \cite[Lemma 5.4]{gerard2014existence}. It remains to verify \eqref{eq:ConvergeBoundary}.

For each $t\in[0,T],$ define
\[
a_t^\varepsilon := \mathbb{O}_t^\top\big(X^\varepsilon(t)-X(t)\big)\in\mathbb{R}^3,
\qquad
S_t^\varepsilon := \mathbb{O}_t^\top \mathbb{O}_t^\varepsilon \in SO(3).
\]
Since $\mathcal{B}_t=X(t)+\mathbb{O}_t\mathcal{B}_0$ and
$\mathcal{B}_t^\varepsilon=X^\varepsilon(t)+\mathbb{O}_t^\varepsilon\mathcal{B}_0$, we have
\[
\mathbb{O}_t^\top(\mathcal{B}_t-X(t))=\mathcal{B}_0,
\qquad
\mathbb{O}_t^\top(\mathcal{B}_t^\varepsilon-X(t))
=a_t^\varepsilon + S_t^\varepsilon \mathcal{B}_0.
\]
Using invariance of Lebesgue measure under rotations, it follows that
\begin{equation}\label{eq:pullback_sd}
\mathcal{L}^3(\mathcal{B}_t^\varepsilon\Delta\mathcal{B}_t)
=
\mathcal{L}^3\big(\mathcal{B}_0 \Delta (a_t^\varepsilon + S_t^\varepsilon\mathcal{B}_0)\big)
\qquad \text{for all } t\in[0,T].
\end{equation}

We introduce the functional
\[
\mathcal{E}(a,S)
:=\mathcal{L}^3\big(\mathcal{B}_0 \Delta (a+S\mathcal{B}_0)\big),
\qquad (a,S)\in\mathbb{R}^3\times SO(3).
\]
Then $\mathcal{E}$ is continuous, and
\begin{equation}\label{eq:zeroset_F}
\mathcal{E}(a,S)=0
\quad\Longleftrightarrow\quad
a=0 \ \text{and}\ S\in G.
\end{equation}
Indeed, $\mathcal{E}(a,S)=0$ means $\mathcal{B}_0=a+S\mathcal{B}_0$ (a.e.), which implies $a=0$
since $\mathcal{B}_0$ is bounded; then $S\mathcal{B}_0=\mathcal{B}_0$, i.e.\ $S\in G$.

Fix $r>0$. Since $SO(3)$ is compact and $G$ is a closed subgroup of $SO(3)$, $G$ is compact.
Consider the compact set
\[
K_r:=\big\{(a,S)\in\mathbb{R}^3\times SO(3): |a|\le M,\ |a|+\mathrm{dist}(S,G)\ge r\big\},
\]
where $M>0$ is chosen so that if $|a|\ge M$ then
$\mathcal{B}_0\cap(a+S\mathcal{B}_0)=\emptyset$ (hence $\mathcal{E}(a,S)=2\mathcal{L}^3(\mathcal{B}_0)$).
By continuity of $\mathcal{E}$ and \eqref{eq:zeroset_F}, $\mathcal{E}>0$ on $K_r$, hence
\[
\kappa(r):=\min_{(a,S)\in K_r}\mathcal{E}(a,S)>0.
\]
Consequently,
\begin{equation}\label{eq:coercive_F}
|a|+\mathrm{dist}(S,G)\ge r
\quad\Longrightarrow\quad
\mathcal{E}(a,S)\ge \kappa(r).
\end{equation}

Assume by contradiction that there exist $\varepsilon_k\to0$ and $r>0$ such that
\[
\sup_{t\in[0,T]}\Big(|a_t^{\varepsilon_k}|+\mathrm{dist}(S_t^{\varepsilon_k},G)\Big)\ge 2r
\quad\text{for all }k.
\]
Then choose $t_k\in[0,T]$ so that
$|a_{t_k}^{\varepsilon_k}|+\mathrm{dist}(S_{t_k}^{\varepsilon_k},G)\ge r$.
By \eqref{eq:coercive_F} and \eqref{eq:pullback_sd},
\[
\mathcal{L}^3(\mathcal{B}_{t_k}^{\varepsilon_k}\Delta\mathcal{B}_{t_k})
=\mathcal{E}(a_{t_k}^{\varepsilon_k},S_{t_k}^{\varepsilon_k})
\ge \kappa(r),
\]
which contradicts \eqref{eq:ConvergeBou}. Hence,
\begin{equation}\label{eq:dist_to_sym}
\lim_{\varepsilon\to0}\ \sup_{t\in[0,T]}
\Big(|a_t^\varepsilon|+\mathrm{dist}(S_t^\varepsilon,G)\Big)=0.
\end{equation}

Fix $t\in[0,T]$. Choose $Q\in G$ such that
$\|S_t^\varepsilon-Q\|=\mathrm{dist}(S_t^\varepsilon,G)$ (existence follows from compactness of $G$).
Then for any $x\in\partial\mathcal{B}_0$,
\[
y^\varepsilon(t,x)=X(t)+\mathbb{O}_t\big(a_t^\varepsilon+S_t^\varepsilon x\big),
\qquad
y(t,Qx)=X(t)+\mathbb{O}_t(Qx).
\]
Therefore, using that $\mathbb{O}_t$ is an isometry and $\partial\mathcal{B}_0$ is bounded,
\[
|y^\varepsilon(t,x)-y(t,Qx)|
\le |a_t^\varepsilon|+\|S_t^\varepsilon-Q\|\,|x|
\le |a_t^\varepsilon|+C_0\,\mathrm{dist}(S_t^\varepsilon,G).
\]
Taking $\sup_{x\in\partial \mathcal{B}_0}$, then $\inf_{Q\in G}$, then $\sup_{t\in[0,T]}$ and using \eqref{eq:dist_to_sym},
we obtain the first estimate in \eqref{eq:ConvergeBoundary}.

For normals, let $n_0(x)$ be the outward unit normal to $\partial\mathcal{B}_0$. Since
$\partial\mathcal{B}_0$ is regular and $Q\in SO(3)$ is an isometry, $n_0(Qx)=Q\,n_0(x)$.
Moreover,
\[
n(y^\varepsilon(t,x))=\mathbb{O}_t^\varepsilon n_0(x)=\mathbb{O}_t S_t^\varepsilon n_0(x),
\qquad
n(y(t,Qx))=\mathbb{O}_t n_0(Qx)=\mathbb{O}_t Q n_0(x).
\]
Hence
\[
|n(y^\varepsilon(t,x))-n(y(t,Qx))|
\le \|S_t^\varepsilon-Q\|
=\mathrm{dist}(S_t^\varepsilon,G).
\]
Taking $\sup_{x\in\partial \mathcal{B}_0}$, $\inf_{Q\in G}$, then $\sup_{t\in[0,T]}$, and and using \eqref{eq:dist_to_sym},we obtain the second estimate in
\eqref{eq:ConvergeBoundary}. The proof is complete.
\end{proof}

\bibliographystyle{plain}
\bibliography{CKWX1014}

\end{document}